\documentclass[12pt]{amsart}

\usepackage{framed}
\usepackage{braket}
\usepackage{amsmath,amssymb,amsthm}
\usepackage{color}
\usepackage{bm}
\usepackage[all]{xy}
\usepackage{amscd}
\usepackage{graphicx}
\usepackage{ascmac}
\usepackage{BOONDOX-frak, mathrsfs}
\usepackage[top=30truemm,bottom=30truemm,left=25truemm,right=25truemm]{geometry}
\usepackage{mathtools}
\mathtoolsset{showonlyrefs=true}
\usepackage{tikz}
\usetikzlibrary{cd}

\makeatletter
\@addtoreset{equation}{section}

\makeatother


\newcommand{\Z}{\mathbb{Z}}

\newcommand{\Q}{\mathbb{Q}}
\newcommand{\R}{\mathbb{R}}

\newcommand{\F}{\mathbb{F}}

\newcommand{\bA}{\mathbb{A}}

\newcommand{\bF}{\mathbb{F}}
\newcommand{\bG}{\mathbb{G}}

\newcommand{\bN}{\mathbb{N}}

\newcommand{\bZ}{\mathbb{Z}}

\newcommand{\ff}{\mathfrak{f}}
\newcommand{\fg}{\mathfrak{g}}

\newcommand{\fm}{\mathfrak{m}}

\newcommand{\fp}{\mathfrak{p}}

\newcommand{\cD}{\mathcal{D}}

\newcommand{\cF}{\mathcal{F}}

\newcommand{\cK}{\mathcal{K}}

\newcommand{\cO}{\mathcal{O}}

\newcommand{\wtil}[1]{\widetilde{#1}}
\newcommand{\ol}[1]{\overline{#1}}
\newcommand{\parenth}[1]{\left( #1 \right)}

\DeclareMathOperator{\Gal}{Gal}

\DeclareMathOperator{\Cok}{Cok}
\DeclareMathOperator{\Ker}{Ker}

\DeclareMathOperator{\ram}{ram}
\DeclareMathOperator{\fin}{fin}

\DeclareMathOperator{\id}{id}
\DeclareMathOperator{\Cl}{Cl}
\DeclareMathOperator{\ord}{ord}

\DeclareMathOperator{\rank}{rank}

\DeclareMathOperator{\cha}{char}

\DeclareMathOperator{\Tr}{Tr}

\usepackage[OT2,T1]{fontenc}
\DeclareSymbolFont{cyrletters}{OT2}{wncyr}{m}{n}
\DeclareMathSymbol{\Sha}{\mathalpha}{cyrletters}{"58}


\let\oldenumerate\enumerate
\renewcommand{\enumerate}{
   \oldenumerate
   \setlength{\itemsep}{1pt}
   \setlength{\parskip}{0pt}
   \setlength{\parsep}{0pt}
}
\let\olditemize\itemize
\renewcommand{\itemize}{
   \olditemize
   \setlength{\itemsep}{1pt}
   \setlength{\parskip}{0pt}
   \setlength{\parsep}{0pt}
}


\theoremstyle{plain}
\newtheorem{thm}{Theorem}[section]
\newtheorem{lem}[thm]{Lemma}
\newtheorem{conj}[thm]{Conjecture}
\newtheorem{prop}[thm]{Proposition}
\newtheorem{cor}[thm]{Corollary}

\theoremstyle{definition}
\newtheorem{defn}[thm]{Definition}

\newtheorem{rem}[thm]{Remark}
\newtheorem{eg}[thm]{Example}


\usepackage{booktabs}
\usepackage{multirow}


\DeclareMathOperator{\length}{length}

\DeclareMathOperator{\Lie}{Lie}

\DeclareMathOperator{\sep}{sep}
\DeclareMathOperator{\tors}{tors}
\DeclareMathOperator{\constant}{(constant)}

\title
[Taelman class groups]
{Iwasawa-type asymptotic formula for Taelman class groups of Drinfeld modules}
\author{Takenori Kataoka}
\address{Department of Mathematics, Faculty of Science Division II, Tokyo University of Science.
1-3 Kagurazaka, Shinjuku-ku, Tokyo 162-8601, Japan}
\email{tkataoka@rs.tus.ac.jp}

\author{Yoshiaki Okumura}
\address{Department of Architecture, Faculty of Science and Engineering,
Toyo University.
2100, Kujirai, Kawagoe, 
Saitama 350-8585, Japan}
\email{okumura165@toyo.jp}

 \keywords{Iwasawa theory, Taelman class groups, Drinfeld modules, Function field arithmetic}
 \subjclass[2020]{11G09 (Primary), 11R23}

\date{\today}


\begin{document}

\maketitle

\begin{abstract}
The Taelman class groups associated to Drinfeld modules over function fields serve as an analogue of ideal class groups of number fields.
In this paper, we establish an analogue of Iwasawa's asymptotic formula for $\Z_p$-extensions in the context of the Taelman class groups.
For this purpose, we naturally introduce and investigate Taelman class groups with moduli.
\end{abstract}

\section{Introduction}\label{sec:intro}

\subsection{Iwasawa's asymptotic formula for ideal class groups}\label{ss:intro_1}

We begin with a quick review of Iwasawa's asymptotic formula that describes the behavior of the ideal class groups of number fields.
For a number field $K$, let $\Cl(K)$ be the ideal class group, which is known to be a finite abelian group.

Let $p$ be a fixed prime number.
Let $\cK/K$ be a $\Z_p$-extension of number fields, which can be
regarded as
a family of number fields $\{K_n\}_{n \geq 0}$ with
\[
K = K_0 \subset K_1 \subset K_2 \subset \cdots \subset \cK = \bigcup_{n \geq 0} K_n
\]
such that $K_n/K$ is a cyclic extension of degree $p^n$ for any $n \geq 0$.
The intermediate field $K_n$ is called the $n$-th layer of $\cK/K$.
Then Iwasawa's asymptotic formula \cite[Theorem 11]{Iwa59} (see also Washington \cite[Theorem 13.13]{Was97}) states that there exist integers $\lambda \geq 0, \mu \geq 0, \nu$ such that
\[
\length_{\Z_p}(\Z_p \otimes_{\Z} \Cl(K_n)) = \lambda n + \mu p^n + \nu
\]
holds for $n \gg 0$.
Here and henceforth, we write $\length_R(-)$ for the length of modules over a commutative ring $R$.
The left hand side is often written as the $p$-adic valuation of the order of $\Cl(K_n)$, but this formalism using the length is more suitable in this paper.

\subsection{Taelman class groups}\label{ss:intro_3}

The main purpose of this paper is to establish an analogue of Iwasawa's asymptotic formula for Taelman class groups associated to Drinfeld modules over function fields.
See \S \ref{sec:NT} for comparison with the number field case.

We fix notation to be used throughout this paper.
Let $q$ be a power of a prime number $p$.
Let $Q$ be a global function field with constant field $\bF_q$.
We fix a place $\infty$ of $Q$ and denote by $A\subset Q$ the ring of elements regular outside $\infty$.
A typical example is $Q = \F_q(t)$ with $\infty$ corresponding to $1/t$, in which case we have $A = \F_q[t]$.

Let $K$ be a global function field over $\F_q$.
We assume that $K$ is a $Q$-algebra, that is, $K$ is equipped with a (necessarily injective) $\bF_q$-algebra homomorphism $\gamma: Q \hookrightarrow K$.
We call such $K$ a {\it global $Q$-field} for convenience.
We define the integer ring $\cO_K$ of $K$ as the integral closure of $\gamma(A)$ in $K$.
The maximal ideals of $\cO_K$ are called finite places of $K$, and the other places are called infinite places.
For a finite place $v$ of $K$, its restriction to $A$ via $\gamma$ is a prime (i.e., a maximal ideal) $\fp$ of $A$, and in this case $v$ is said to be a $\fp$-adic place.

Let $E$ be a Drinfeld $A$-module over $\cO_K$ (Definition \ref{defn:Drinf}).
In \cite{Tae10}, Taelman introduced a finite module $H(E/\cO_K)$ over $A$ (not over $\Z$), which we call the {\it Taelman class group}. It is shown that
the Taelman class groups serve
as an analogue of the ideal class groups of number fields.
As a keystone result, we have an analogue of the analytic class number formula \cite[Theorem 1]{Tae12}, which was subsequently generalized in various directions by several authors (e.g., \cite{AT15}, \cite{FGHP22}).
Angl\`{e}s--Taelman \cite{AT15} studied $H(C/\cO_K)$
for the Carlitz module $C$ and Carlitz cyclotomic extensions $K/Q$,
and showed results analogous to the ideal class groups $\Cl(K)$ for cyclotomic extensions $K/\Q$.

More recently, Higgins \cite{Hig21} studied the Taelman class groups from Iwasawa theoretic perspective.
In \cite{Hig21}, the inverse limit of the Taelman class groups is studied as a module over the Iwasawa algebra (see \S \ref{ss:prf_outline} below), and an Iwasawa main conjecture is formulated.
However, an Iwasawa-type asymptotic formula is not discussed.

\subsection{Main theorem}\label{ss:intro_4}

Let $\cK/K$ be a $\Z_p$-extension of global $Q$-fields in which only finitely many places of $K$ are ramified (see Example \ref{eg:Z_p-ext}).
We write $K_n$ for its $n$-th layer.
Let $E$ be a Drinfeld $A$-module over $\cO_K$.

The Taelman class group, being a finite module over the Dedekind domain $A$, decomposes into its $\fp$-parts as
\[
H(E/\cO_{K_n}) \simeq \bigoplus_{\fp} (A_{\fp} \otimes_A H(E/\cO_{K_n})),
\]
where $\fp$ runs over the primes of $A$ and $A_{\fp}$ denotes the completion.
This is analogous to the decomposition of the ideal class group $\Cl(K_n) \simeq \bigoplus_p (\Z_p \otimes_{\Z} \Cl(K_n))$, where $p$ runs over the prime numbers.

The main result of this paper, Theorem \ref{thm:ICNF1}, in the case $S = \emptyset$ reads as follows.

\begin{thm}\label{thm:ICNF}
Let $\fp$ be a prime of $A$.
Suppose that $\cK/K$ is unramified at all $\fp$-adic places.
Then there are integers $\mu_{\fp} \ge 0$ and $\nu_{\fp}$ such that
\[
\length_{A_{\fp}}(A_{\fp} \otimes_A H(E/\cO_{K_n})) 
= \mu_{\fp} p^n + \nu_{\fp}
\]
holds for $n \gg 0$.
\end{thm}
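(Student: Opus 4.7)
The plan is to adapt the classical Iwasawa-theoretic descent strategy to the Taelman setting. Set $\Gamma := \Gal(\cK/K) \cong \Z_p$, choose a topological generator $\sigma$, and form the Iwasawa algebra $\Lambda := A_\fp[[\Gamma]] \cong A_\fp[[T]]$ with $T := \sigma - 1$; let $\pi$ denote a uniformizer of $A_\fp$. A crucial structural feature, which already foreshadows the absence of a $\lambda n$ term in the asymptotic, is that $\Lambda$ has characteristic $p$, so the $n$-th descent element $\omega_n := \sigma^{p^n} - 1$ equals $T^{p^n}$.

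I would first assemble the inverse limit $X := \varprojlim_n H_n$, where $H_n := A_\fp \otimes_A H(E/\cO_{K_n})$ and the transition maps are the norm maps of Taelman class groups; this makes $X$ into a compact $\Lambda$-module. The central step is a control theorem: under the hypothesis that $\cK/K$ is unramified at all $\fp$-adic places, I would prove that the natural descent map $X/\omega_n X \to H_n$ has kernel and cokernel of $A_\fp$-length bounded independently of $n$. This is where the Taelman class groups with moduli introduced in the paper should play the decisive role: the moduli data furnish a clean framework for tracking contributions from the (necessarily non-$\fp$-adic) ramified places, and a snake-lemma argument applied to the defining exponential-type exact sequence for $H(E/-)$ should yield the bound. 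With control in place, Nakayama's lemma together with the finiteness of $H_0$ gives finite generation of $X$ over $\Lambda$, and the finiteness of each $H_n$ forces $X$ to be $\Lambda$-torsion with characteristic ideal coprime to $T$.

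Applying the structure theorem, $X$ is pseudo-isomorphic to a finite direct sum of elementary modules $\Lambda/(\pi^{a_i})$ and $\Lambda/(g_j^{b_j})$ with $g_j$ distinguished and $g_j(0) \ne 0$. In characteristic $p$ each elementary piece contributes purely linearly in $p^n$ to the $A_\fp$-length of the $T^{p^n}$-cokernel: the $\Lambda/(\pi^{a_i})$ piece contributes $a_i p^n$, while $\Lambda/(g_j^{b_j})$, being free of rank $b_j \deg g_j$ over $A_\fp$, has $T^{p^n}$-cokernel of $A_\fp$-length equal to $v_\pi(\det(T^{p^n})) = p^n \cdot b_j \cdot v_\pi(g_j(0))$. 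This is the structural reason no $\lambda n$ term appears: what would be a $\lambda$-contribution in characteristic zero is absorbed into an enlarged $\mu$-like invariant because $\omega_n$ is a pure power of $T$. Summing these and absorbing the bounded discrepancies from both pseudo-nullity and the control theorem yields $\length_{A_\fp}(H_n) = \mu_\fp p^n + \nu_\fp$ for $n \gg 0$. The main obstacle is the control theorem; once the moduli version of Taelman class groups supplies the correct exact sequences, the asymptotic follows by the standard Iwasawa template.
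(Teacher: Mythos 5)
Your outline follows the same template as the paper (form $X=\varprojlim_n H_n$ over $A_\fp[[T]]$, note $\omega_n=T^{p^n}$ in characteristic $p$, descend, apply the structure theorem), and your computation for elementary modules and your explanation for the absence of a $\lambda n$ term are both correct. However, there are two genuine gaps.

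First, the ``control theorem'' you posit is the actual mathematical content of the theorem, and you do not prove it; moreover, the form you state it in is too weak. You claim the descent map $X/\omega_nX\to H_n$ has kernel and cokernel of length \emph{bounded} independently of $n$, and later speak of ``absorbing the bounded discrepancies.'' Boundedness only yields $\length_{A_\fp}(H_n)=\mu_\fp p^n+O(1)$, whereas the theorem asserts exact equality $\mu_\fp p^n+\nu_\fp$ for $n\gg0$. What is actually needed (and what the paper proves) is stronger: for the $S$-modified class groups with $S\supset S_{\ram}$ the descent is an honest isomorphism $H_S(E/\cO_{\cK})_{\Gamma^{p^n}}\simeq H_S(E/\cO_{K_n})$ (Propositions \ref{prop:descent1}, \ref{prop:descent2}, \ref{prop:descent4}), resting on cohomological triviality of $K'$ and $K'_v$ and on surjectivity of the trace on the local modules $E(\ff'\cO_{K',v})$; and the discrepancy between $H_{S_{\ram}}(E/\cO_{K_n})_\fp$ and $H(E/\cO_{K_n})_\fp$ is not merely bounded but \emph{eventually constant}, because the error term $\bigoplus_{v\in S_{\ram}}A_\fp\otimes_{\hat A}E(\cO_{K_n,v})\simeq\bigoplus_v A_\fp\otimes_{\hat A}E(\kappa_v(K_n))$ is a stationary system (the residue fields at ramified places stabilize, and these $v$ are non-$\fp$-adic by hypothesis). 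Your proposal does not supply either of these exact/stationary statements.

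Second, the same issue recurs on the algebraic side. The structure theorem only gives a pseudo-isomorphism $X\to E$, and to conclude an exact formula one must show that $\length_R(X/T^N)-\length_R(E/T^N)$ is eventually \emph{constant}, not just bounded. This is Propositions \ref{prop:PI_const1}--\ref{prop:PI_const3} of the paper, which the authors explicitly flag as ``a stronger statement than merely being bounded as $N\to\infty$, which is much easier''; the proof requires tracking the module $L=\mathrm{Cok}(M[T^\infty]\to M''[T^\infty])$ through compatible snake-lemma diagrams. Your appeal to ``absorbing the bounded discrepancies from pseudo-nullity'' skips exactly this step. In short: the skeleton is right, but both the geometric descent and the algebraic comparison must be upgraded from $O(1)$ to eventually-constant error terms, and the descent isomorphism itself (via the $S$-modified class groups) is the part that actually has to be constructed.
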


Indeed, we have $\mu_{\fp} = \mu(H(E/\cO_{\cK})_{\fp})$, using Definitions \ref{defn:Iw_mod} and \ref{defn:str_thm}.
In contrast to the number field case, the absence of the $\lambda$-invariant seems to be notable.
This results from a purely algebraic reason (\S \ref{sec:alg}).

By taking the sum with respect to the primes $\fp$, we will easily deduce the following corollary.

\begin{cor}\label{cor:ICNF_ur}
Suppose that $\cK/K$ is unramified at all finite places.
Then there are integers $\mu \ge 0$ and $\nu$ such that
\[
\length_A(H(E/\cO_{K_n})) 
= \mu p^n + \nu
\]
holds for $n \gg 0$.
\end{cor}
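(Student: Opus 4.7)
Since $H(E/\cO_{K_n})$ is a finite $A$-module, it decomposes canonically into its $\fp$-primary components, giving
$$\length_A H(E/\cO_{K_n}) = \sum_{\fp} \length_{A_{\fp}}\bigl(A_{\fp} \otimes_A H(E/\cO_{K_n})\bigr),$$
a sum with only finitely many nonzero terms for each fixed $n$. Since $\cK/K$ is unramified at every finite place, Theorem~\ref{thm:ICNF} applies at every prime $\fp$ and identifies each summand with $\mu_{\fp} p^n + \nu_{\fp}$ for $n$ beyond a threshold $n_{\fp}$ that a priori depends on $\fp$.

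The content of the corollary is therefore a uniformity statement: it suffices to exhibit a finite set $T$ of primes of $A$ and an integer $N$ such that, for every $n \geq N$, the formula of Theorem~\ref{thm:ICNF} already holds for every $\fp \in T$, while $A_{\fp} \otimes_A H(E/\cO_{K_n}) = 0$ whenever $\fp \notin T$. Once this is in hand, summing yields the corollary with $\mu := \sum_{\fp \in T} \mu_{\fp}$ and $\nu := \sum_{\fp \in T} \nu_{\fp}$.

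To obtain the uniformity, the plan is to exploit the $\Lambda$-module structure of the inverse limit $H(E/\cO_{\cK}) := \varprojlim_n H(E/\cO_{K_n})$, where $\Lambda := A[[\Gamma]]$ and $\Gamma := \Gal(\cK/K)$ (cf.\ \S\ref{ss:prf_outline}). Under the hypothesis of unramifiedness at every finite place, the norm-compatible transition maps should be surjective for $n \gg 0$, so $H(E/\cO_{\cK})$ surjects onto $H(E/\cO_{K_n})$; localizing at $\fp$ then shows that the $A$-support of $H(E/\cO_{K_n})$ is contained in that of $H(E/\cO_{\cK})$ for all such $n$. Taking $T$ to be this $A$-support reduces the problem to proving $T$ is finite.

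The main obstacle is precisely this finiteness, because an inverse limit of $A$-torsion modules need not have finite $A$-support in general. The plan to overcome it is to invoke the structure theorem underlying Theorem~\ref{thm:ICNF} (Definition~\ref{defn:str_thm} and the algebraic analysis of~\S\ref{sec:alg}): modulo pseudo-null submodules, $H(E/\cO_{\cK})$ should decompose as a direct sum of cyclic $\Lambda$-modules whose annihilator ideals involve only finitely many primes $\fp$ of $A$, a reflection of the absence of a $\lambda$-invariant in Theorem~\ref{thm:ICNF}. Combined with standard control on the pseudo-null part, this should furnish both the finiteness of $T$ and the uniform threshold $N$.
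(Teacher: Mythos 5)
Your reduction is correct up to a point: you rightly observe that the corollary follows from Theorem~\ref{thm:ICNF} once one knows (i) that only finitely many primes $\fp$ contribute, and (ii) that the threshold $n_{\fp}$ can be taken uniformly. But the route you propose to close this gap --- invoking a structure theorem for the inverse limit $H(E/\cO_{\cK})$ to control its $A$-support --- is not the right tool and is left too vague to count as a proof. The structure theory developed in \S\ref{sec:alg} (Definition~\ref{defn:str_thm}, Theorem~\ref{thm:alg_T}) applies to modules over $A_{\fp}[[T]]$ for a \emph{fixed} prime $\fp$, since $A_{\fp}$ is a complete DVR; there is no structure theorem stated or needed in the paper for modules over $A[[\Gamma]]$ or $\hat{A}[[\Gamma]]$ that would let you read off ``which $\fp$ occur'' from a decomposition into cyclic modules, so your plan does not actually establish the finiteness of $T$.

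The paper closes the gap much more cheaply, and the key input you are missing is the finiteness of the base-level class group itself (Proposition~\ref{prop:UH_rank}(1)). Since $\cK/K$ is everywhere unramified at finite places, $S_{\ram}=\emptyset$, so Proposition~\ref{prop:descent4} with $S=\emptyset$ gives $H(E/\cO_{\cK})_{\Gamma}\simeq H(E/\cO_{K})$, a finite $A$-module, hence supported at only finitely many primes $\fp$. For any $\fp$ outside that support, $(H(E/\cO_{\cK})_{\fp})_{\Gamma}=0$, and since $A_{\fp}[[\Gamma]]$ is a local ring, topological Nakayama forces $H(E/\cO_{\cK})_{\fp}=0$; taking $\Gamma^{p^n}$-coinvariants and using Proposition~\ref{prop:descent4} again then gives $H(E/\cO_{K_n})_{\fp}=0$ for \emph{every} $n\ge 0$, not merely $n\gg 0$. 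With the set of contributing primes fixed and finite independently of $n$, one sets $\mu=\sum_{\fp}\mu_{\fp}$ and $\nu=\sum_{\fp}\nu_{\fp}$ and the corollary follows. In short: descent plus Nakayama at the bottom layer, not a decomposition of the limit, is what provides both the finiteness and the uniformity.
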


The unramified assumption sounds restrictive, though it was also assumed in \cite{Hig21}.
In Theorem \ref{thm:ICNF1}, we will remove the unramified assumption, but instead deal with $S$-modified Taelman class groups with $S$ a finite set of places containing the ramified $\fp$-adic places.

\subsection{Strategy of the proof}\label{ss:prf_outline}

One of the main ingredients for the proof is a module-theoretic version of Iwasawa's asymptotic formula in positive characteristic, which we establish in \S \ref{sec:alg}.

For a while, we discuss Theorem \ref{thm:ICNF} when $\cK/K$ is unramified at all finite places.
In the same way as Higgins \cite[Chapter 4]{Hig21}, we study the projective limit
\[
H(E/\cO_{\cK})_{\fp} = \varprojlim_n (A_{\fp} \otimes_A H(E/\cO_{K_n}))
\]
as a module over the completed group ring $A_{\fp}[[\Gal(\cK/K)]]$.
Thanks to the unramified condition, we have a Galois descent property
\[
(H(E/\cO_{\cK})_{\fp})_{\Gal(\cK/K_n)} \simeq A_{\fp} \otimes_A H(E/\cO_{K_n})
\]
for each $n \geq 0$.
Then Theorem \ref{thm:ICNF} follows by applying the result in \S \ref{sec:alg}.

However, if $\cK/K$ is ramified at a ($\fp$-adic or non-$\fp$-adic) finite place, then the Galois descent property fails.
This kind of issue already appears in previous work such as Ferrara--Green--Higgins--Popescu \cite{FGHP22}.
The solution in \cite{FGHP22} is to introduce auxiliary {\it taming modules}.
However, the choice of taming module is not unique, and we would like to avoid such an ambiguity.

For this reason, we take another solution, that is, we introduce $S$-modified Taelman class groups for a finite set $S$ of places.
We will show that the $S$-modified Taelman class groups satisfy the Galois descent property as long as $S$ contains the ramified places.
Finally, by comparing the $S$-modified Taelman class groups with the original ones, we obtain Theorem \ref{thm:ICNF} and its generalization, Theorem \ref{thm:ICNF1}.

\subsection{Structure of this paper}\label{ss:structure}

In \S \ref{sec:Drinf}, we review basic notations concerning Drinfeld modules.
In \S \ref{sec:clgp}, we introduce the unit groups and class groups \'{a} la Taelman and their $S$-modifications.
In \S \ref{sec:app}, we prove the main theorems.
In \S \ref{sec:rank}, we give remarks on the sizes of $S$-modified Taelman class groups and their Iwasawa-theoretic versions.

In \S \ref{sec:alg}, we show the module-theoretic version of Iwasawa's asymptotic formula.
In \S \ref{sec:NT}, we review the ideal class groups to facilitate comparison with the main theme of this paper.

\section{Drinfeld modules}\label{sec:Drinf}

In this section, we review the basics of Drinfeld modules, introduced by Drinfeld \cite{Dri74} under the name of {\it elliptic modules}.
We mainly refer the details to the books of Goss \cite{Gos96} and Papikian \cite{Pap23}.
Note that in the main text of \cite{Pap23} we assume $A = \F_q[t]$, while the general case is discussed in its Appendix A.

Let $A \subset Q$ be as in \S\ref{ss:intro_3}.
We write $Q_{\infty}$ and $Q_{\fp}$ for the completions at $\infty$ and a prime $\fp$ of $A$, respectively.
For a global $Q$-field $K$, we use the following notation.
For a place $v$ of $K$, we write $v \mid \infty$ (resp.~$v \mid \fp$) if $v$ is an infinite place (resp.~a $\fp$-adic place).
Let $S_{\infty} = S_{\infty}(K)$ (resp.~$S_{\fp} = S_{\fp}(K)$) denote the set of infinite places (resp.~$\fp$-adic places) of $K$.
For each place $v$, let $K_v$ denote the completion of $K$ at $v$.
We write $K_{\infty} = Q_{\infty} \otimes_Q K = \prod_{v \mid \infty} K_v$.
When $v$ is a finite place, we also have the integer ring $\cO_{K, v} \subset K_v$, the maximal ideal $\fm_{K, v} \subset \cO_{K, v}$, and the residue field $\kappa_v(K) = \cO_{K, v}/\fm_{K, v}$.

\subsection{Drinfeld modules}\label{ss:Drinf}

For a commutative $\F_q$-algebra $R$, we have the $q$-th Frobenius map
\[
\tau: R \to R,
\qquad
\tau(a) = a^q.
\]
We write $R\{\tau\}$ for the non-commutative polynomial ring.
By definition, $R\{\tau\}$ consists of polynomials of $\tau$ with coefficients in $R$, and the multiplication is defined so that $(a \tau^i) (b \tau^j) = a \tau^i(b) \tau^{i+j}$ for $a, b \in R$ (\cite[Definition 3.1.8]{Pap23}).
We have an $R$-algebra homomorphism 
\[
\partial: R\{\tau\} \to R,  
\qquad
a_0 + a_1 \tau + \dots + a_n \tau^n \mapsto a_0,
\]
which is called the derivative (\cite[Definition 3.1.12]{Pap23}).

Let $K$ be a global $Q$-field with an $\F_q$-homomorphism $\gamma: Q \hookrightarrow K$.

\begin{defn}[{\cite[Definition 3.2.2, Definition A.2]{Pap23}}]\label{defn:Drinf}
A Drinfeld $A$-module $E$ over $\cO_K$ is an $\F_q$-algebra homomorphism
\[
\phi_E: A \to \cO_K\{\tau\}
\]
such that the composite map $\partial \circ \phi_E: A \to \cO_K$ coincides with $\gamma$.
Note that we do not assume the usual extra condition that $\phi_E(A) \not \subset \cO_K$; see Remark \ref{rem:triv}.
\end{defn}

In the rest of this section, let $E$ be a Drinfeld $A$-module over $\cO_K$.
The scalar restriction via $\phi_E$ gives us a functor
\[
E: \text{(left $\cO_K \{\tau\}$-modules)} \to \text{($A$-modules)}.
\]
Concretely, for each $\cO_K \{\tau\}$-module $M$, we set $E(M) := M$ on which $A$ acts as
\[
a \cdot x = \phi_E(a) x
\]
for $a \in A$ and $x \in M$.
Note that $M$ and $E(M)$ are identical as $\F_q$-modules, so the functor is exact.
Since $E$ is a functor, if $M$ is equipped with an action of a group $G$, then $E(M)$ also has an action of $G$.
Moreover, by the construction, the $\F_q[G]$-module structures of $M$ and $E(M)$ are identical.
In particular, $M$ is $G$-cohomologically trivial ($G$-c.t., for short) if and only if so is $E(M)$.

We also have a natural functor
\[
\text{(commutative $\cO_K$-algebras)} \to \text{(left $\cO_K \{\tau\}$-modules)}
\]
by considering each commutative $\cO_K$-algebra $R$ as a left $\cO_K\{\tau\}$-module by using the $q$-th Frobenius map $\tau: R \to R$.
The resulting composite functor 
\[
\text{(commutative $\cO_K$-algebras)} \to \text{(left $\cO_K \{\tau\}$-modules)} \overset{E}{\to} \text{($A$-modules)}
\]
is also denoted by $E$.

It is known (\cite[Definition A.6]{Pap23}) that there is a non-negative integer $r$ such that for any $a\in A$, 
\[
\deg_\tau\phi_E(a)=r\cdot \dim_{\bF_q}(A/aA),
\] 
where $\deg_\tau \phi_E(a)$ is the degree as a polynomial in $\tau$.
We call $r$ the {\it rank} of $E$.

\begin{eg}\label{eg:Carlitz}
Suppose that $Q=\bF_q(t)$ and $\infty$ is the place corresponding to $1/t$, so $A=\bF_q[t]$.
We also write $\theta = \gamma(t) \in K$.
We define the Carlitz module $C$, which is a Drinfeld $A$-module over $\cO_K$ of rank one, by
\[
\phi_C: A = \F_q[t] \to \cO_K\{\tau\},
\quad
\phi_C(t) = \theta + \tau.
\]
For a commutative $\cO_K$-algebra $R$, the $A$-module structure on $C(R)$ is given by $t \cdot a = \theta a + a^q$.
\end{eg}

\subsection{Exponential maps}\label{ss:lattice}

We write $\phi_{\Lie_E}$ for the homomorphism 
\[
\phi_{\Lie_E} := \partial \circ \phi_E = \gamma: A \to \cO_K.
\]
This defines a functor
\[
\Lie_E: \text{($\cO_K$-modules)} \to \text{($A$-modules)},
\]
which is simply the scalar restriction from $\cO_K$ to $A$ via $\gamma$.

It is known (\cite[Proposition 4.6.7]{Gos96}) that there is a unique power series, called the exponential map,
\[
\exp_E(X) = X + \sum_{i = 1}^{\infty} e_i X^{q^i} \in K[[X]]
\]
such that 
\[
\phi_E(a)(\exp_E(X)) = \exp_E(\gamma(a) X)
\]
for all $a \in A$.

Let $v$ be an infinite place of $K$.
Then it is known (\cite[Proof of Theorem 4.6.9]{Gos96}) that $\exp_E$ is convergent on $K_v$, giving an $A$-homomorphism
\[
\exp_E: \Lie_E(K_v) \to E(K_v).
\]
We write $\Lambda_E(K_v)$ for the kernel of this map.
By taking the direct sum for $v \mid \infty$, we also have an $A$-homomorphism
\[
\exp_E: \Lie_E(K_{\infty}) \to E(K_{\infty}),
\]
whose kernel is $\Lambda_E(K_{\infty}) := \bigoplus_{v \mid \infty} \Lambda_E(K_v)$.

For each $v$, it is known that $\Lambda_E(K_v)$ is a finitely generated $A$-module of rank $\leq r$, where $r$ denotes the rank of $E$.
Indeed, the kernel of $\exp_E: \Lie_E(K_v^{\sep}) \to E(K_v^{\sep})$ is known to be finitely generated of rank $r$, where $K_v^{\sep}$ denotes the separable closure of $K_v$ (see \cite[Proof of Theorem 4.6.9]{Gos96}), so we have $\rank_A \Lambda_E(K_v) = r$ if $K_v$ is large enough.
We set
\[
r_E(K) = \rank_A \Lambda_E(K_{\infty}),
\]
so we have $r_E(K) \leq r \cdot \# S_{\infty}(K)$ and the equality holds if $K$ is large enough.
This invariant $r_E(K)$ plays the role of the number of complex places, usually denoted by $r_2(K)$ for a number field $K$, in the original Iwasawa theory; see \S \ref{ss:HU_rank} and \S \ref{sec:NT}.

\begin{eg}\label{eg:Carlitz_2}
Let us consider the Carlitz module $C$ as in Example \ref{eg:Carlitz}.
If we set $D_0=1$ and $D_i=(\theta^{q^i}-\theta)D_{i-1}^q$ for $i\geq 1$, 
then it is known (\cite[Proposition 5.4.1]{Pap23}) that the exponential map of $C$ is
\[
\exp_C(X)=X+\sum_{i=1}^\infty \frac{X^{q^i}}{D_i}\in \bF_q(\theta)[[X]] \subset K[[X]]
\]
and the kernel of $\exp_C\colon \Lie_C(\F_q((1/\theta))^{\sep}) \to C(\F_q((1/\theta))^{\sep})$ is a rank-one free $A$-module generated by 
\[
\pi_C= (-\theta)^{\frac{1}{q-1}} \theta \prod_{i=1}^\infty \parenth{1-\frac{1}{\theta^{q^i-1}}}^{-1},
\]
where $(-\theta)^{\frac{1}{q-1}}$ is a $(q-1)$-th root of $-\theta$ (see \cite[Corollary 5.4.9]{Pap23}).
Thus,
we have
\[
r_C(K)=\#\{v \in S_\infty(K) \mid (-\theta)^{\frac{1}{q-1}} \in K_v\}.
\] 
\end{eg}

\subsection{Local rational points}\label{ss:local}

For a finite place $v$ of $K$, we have $A$-modules $E(\fm_{K, v}) \subset E(\cO_{K, v}) \subset E(K_v)$.
The quotient module $E(\cO_{K, v})/E(\fm_{K, v}) \simeq E(\kappa_v(K))$ is finite, and $E(\fm_{K, v})$ has the following property.

\begin{prop}\label{prop:loc_str}
Let $\fp$ be a prime of $A$ and $v$ a $\fp$-adic place of $K$.
Then $E(\fm_{K, v})$ is naturally equipped with a finitely generated $A_{\fp}$-module structure of rank $[K_v: Q_{\fp}]$.
Therefore, $\bigoplus_{v \mid \fp} E(\fm_{K, v})$ is a finitely generated $A_{\fp}$-module of rank $[K: Q]$.
\end{prop}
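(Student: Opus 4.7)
The plan is to first extend the $A$-action on $E(\fm_{K, v})$ to an $A_{\fp}$-action by a filtration argument, and then compute the rank by invoking the formal logarithm of $\phi_E$.

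For the extension of scalars, I would consider the decreasing filtration $E(\fm_{K, v}) \supseteq E(\fm_{K, v}^2) \supseteq \cdots$; each $E(\fm_{K, v}^j)$ is an $A$-submodule because $\phi_E(a)(X) = \gamma(a) X + c_1 X^q + c_2 X^{q^2} + \cdots$ has no constant term in $X$ and all $c_i \in \cO_K \subseteq \cO_{K, v}$, so $\phi_E(a)$ sends $\fm_{K, v}^j$ into itself. On the graded piece $\fm_{K, v}^j/\fm_{K, v}^{j+1}$ (a one-dimensional $\kappa_v(K)$-vector space for $j \geq 1$), each higher-order term $c_i x^{q^i}$ for $i \geq 1$ lies in $\fm_{K, v}^{q^i j} \subseteq \fm_{K, v}^{j+1}$, so $\phi_E(a)$ acts simply as multiplication by $\gamma(a) \bmod \fm_{K, v}$. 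For $a \in A \setminus \fp$ we have $\gamma(a) \in \cO_{K, v}^{\times}$, hence $\phi_E(a)$ induces an automorphism of each graded piece, and therefore of the finite $A$-module $E(\fm_{K, v})/E(\fm_{K, v}^N)$ for every $N \geq 1$. Such a finite $A$-module, on which every element of $A \setminus \fp$ acts invertibly, is automatically an $A_\fp$-module (being finite, it is annihilated by some power of $\fp A_{(\fp)}$). Passing to the inverse limit $E(\fm_{K, v}) = \varprojlim_N E(\fm_{K, v})/E(\fm_{K, v}^N)$, which is valid because $\fm_{K, v}$ is $v$-adically complete, then endows $E(\fm_{K, v})$ with a natural $A_\fp$-module structure compatible with the given $A$-action.

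For finite generation and the rank, I would invoke the formal logarithm $\log_E \in K[[X]]$ of $\phi_E$, which intertwines $\phi_E(a)$ with multiplication by $\gamma(a)$. A standard convergence estimate shows that for $N$ large enough, $\log_E$ converges on $\fm_{K, v}^N$ and yields an $A$-module isomorphism $E(\fm_{K, v}^N) \xrightarrow{\sim} \fm_{K, v}^N$ onto its image, where the target carries the $A$-action through $\gamma$. Since $v$ is $\fp$-adic, $\gamma$ extends to an embedding $\hat\gamma\colon A_\fp \hookrightarrow K_v$, and $\fm_{K, v}^N$ is isomorphic as $A_\fp$-module to $\cO_{K, v}$, which is free of rank $[K_v : Q_\fp]$ over $A_\fp$ since $\cO_{K, v}$ is the integral closure of $A_\fp$ in the finite separable extension $K_v/Q_\fp$. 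The short exact sequence $0 \to E(\fm_{K, v}^N) \to E(\fm_{K, v}) \to E(\fm_{K, v})/E(\fm_{K, v}^N) \to 0$, whose right-hand term is a finite torsion $A_\fp$-module, then gives the required rank $[K_v : Q_\fp]$ for $E(\fm_{K, v})$. For the direct sum statement, I would conclude via the standard identity $\sum_{v \mid \fp} [K_v : Q_\fp] = [K : Q]$, which comes from $Q_\fp \otimes_Q K \simeq \prod_{v \mid \fp} K_v$.

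The main obstacle is the convergence analysis of $\log_E$ at the finite place $v$, which is the formal-module analogue at a $\fp$-adic place of the construction in \S \ref{ss:lattice}, and which is not directly spelled out in the excerpt. Establishing the necessary bounds on the coefficients of $\log_E$ to guarantee $v$-adic convergence on some disk $\fm_{K, v}^N$, together with verifying that the resulting map is a genuine $A$-module isomorphism onto its image, is the technical heart of the argument; it is classical for formal groups but requires the valuation estimates specific to Drinfeld modules.
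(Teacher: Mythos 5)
Your proposal is correct and follows essentially the same route as the paper, which simply outsources both steps to the literature: the $A_{\fp}$-module structure is obtained from the formal Drinfeld $A_{\fp}$-module attached to $E$ (Papikian, after Definition 6.5.1; Rosen), and the rank computation uses Rosen's result that the logarithm gives an $A_{\fp}$-isomorphism $E(\fm_{K,v}^s) \to \fm_{K,v}^s$ for $s \gg 0$ — precisely the filtration and convergence arguments you reconstruct. One small caveat: $K_v/Q_{\fp}$ need not be separable (the paper explicitly allows inseparable extensions elsewhere), but finiteness of $\cO_{K,v}$ over the complete discrete valuation ring $A_{\fp}$ holds regardless, so your conclusion is unaffected.
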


\begin{proof}
The $A_{\fp}$-module structure on $E(\fm_{K, v})$
comes from the formal Drinfeld $A_\fp$-module attached to $E$
as explained in \cite[Text after Definition 6.5.1]{Pap23} and Rosen \cite[p.247, Text before Definition]{Ros03}.
Moreover, 
in \cite[Proposition 2.3]{Ros03},
it is shown that the logarithm map gives an $A_{\fp}$-isomorphism $E(\fm_{K, v}^s) \to \fm_{K, v}^s$ for a large integer $s \geq 1$.
Therefore, we have 
\begin{align*}
\rank_{A_{\fp}}(E(\fm_{K, v}))
& = \rank_{A_{\fp}}(E(\fm_{K, v}^s))
= \rank_{A_{\fp}}(\fm_{K, v}^s)\\
& = \rank_{A_{\fp}}(\cO_{K, v})
= [K_v: Q_{\fp}]
\end{align*}
as claimed.
The final assertion follows from $\sum_{v \mid \fp} [K_v: Q_{\fp}] = [K: Q]$.
\end{proof}

\section{Taelman class groups}\label{sec:clgp}

In this section, after reviewing the unit groups and class groups \'{a} la Taelman, we introduce its generalizations by considering moduli.
See \S \ref{sec:NT} for analogues in the number field setting.
Then we observe the behavior of the class groups with respect to Galois descent, which plays the key role in the proof of the main theorems.

Let $K$ be a global $Q$-field and $E$ a Drinfeld $A$-module over $\cO_K$.

\subsection{The original unit groups and class groups}\label{ss:clgp}

We first review the original unit group and class group (without moduli), introduced by Taelman \cite{Tae10} (see also \cite{ANT17}).

\begin{defn}\label{defn:clgp_ori}
We define the Taelman class group $H(E/\cO_K)$ as the quotient $A$-module
\[
H(E/\cO_K)
:= \frac{E(K_{\infty})}{E(\cO_K) + \exp_E(K_{\infty})}
\]
(\cite[Definition 4]{Tae10}; one may refer to \cite[Theorem 7.6.17(1)]{Pap23}).
Strictly speaking, the term $\exp_E(K_{\infty})$ in the denominator should be written as $\exp_E(\Lie_E(K_{\infty}))$, but this is unnecessarily complicated, so we adopt this simpler notation.

We have two kinds of unit groups, denoted by $U(E/\cO_K)$ and $U'(E/\cO_K)$, the latter being auxiliary in this paper.
(Even the former is unnecessary for the proof of the main results themselves, but is needed to fully study the class groups in \S \ref{ss:HU_rank}.)
We define $U(E/\cO_K)$ as the kernel of the natural $A$-homomorphism
\[
E(\cO_K) \hookrightarrow E(K_{\infty}) \twoheadrightarrow \frac{E(K_{\infty})}{\exp_E(K_{\infty})},
\]
while we define $U'(E/\cO_K)$ as the kernel of the $A$-homomorphism
\[
\Lie_E(K_{\infty}) \overset{\exp_E}{\to} E(K_{\infty}) \twoheadrightarrow \frac{E(K_{\infty})}{E(\cO_K)}.
\]
The module $U'(E/\cO_K)$ appears in \cite[Theorem 1]{Tae10}, while $U(E/\cO_K)$ appears in \cite[Proposition 1]{Tae10} when $E$ is the Carlitz module.
One may refer to \cite[The beginning of \S 7.6.3]{Pap23} for $U(E/\cO_K)$ in general.
\end{defn}

By definition, we have an exact sequence
\[
0 \to U(E/\cO_K)
\to E(\cO_K) 
\to \frac{E(K_{\infty})}{\exp_E(K_{\infty})}
\to H(E/\cO_K)
\to 0
\]
of $A$-modules.

\begin{rem}\label{rem:triv}
In the definition of Drinfeld modules, we do not assume the standard extra condition that $\phi_E(A) \not \subset \cO_K$.
When $\phi_E(A) \subset \cO_K$, the exponential map $\exp_E: \Lie_E(K_v) \to E(K_v)$ is simply the identity map, so we have $H(E/\cO_K) = 0$, which is uninteresting.
\end{rem}

The relation between $U(E/\cO_K)$ and $U'(E/\cO_K)$ is described as follows.

\begin{lem}\label{lem:rel_UU'}
We have an exact sequence
\[
0 \to \Lambda_E(K_{\infty}) \to U'(E/\cO_K) \overset{\exp_E}{\to} U(E/\cO_K) \to 0
\]
of $A$-modules.
\end{lem}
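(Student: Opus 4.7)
The plan is to unpack the two definitions and check exactness position by position; no ingredient beyond the definitions of $U(E/\cO_K)$, $U'(E/\cO_K)$, and $\Lambda_E(K_\infty)$ is needed.

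First, I would note that, as subsets of $E(K_\infty)$ and $\Lie_E(K_\infty)$ respectively,
\[
U(E/\cO_K) = E(\cO_K) \cap \exp_E(\Lie_E(K_\infty)),
\qquad
U'(E/\cO_K) = \exp_E^{-1}(E(\cO_K)),
\]
and both are $A$-submodules. With these identifications in hand, the restriction of $\exp_E: \Lie_E(K_\infty) \to E(K_\infty)$ to $U'(E/\cO_K)$ visibly lands in $E(\cO_K)$ and obviously lies in $\exp_E(\Lie_E(K_\infty))$, so it factors as an $A$-homomorphism $U'(E/\cO_K) \to U(E/\cO_K)$ which is the map of the sequence.

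Next, I would verify exactness at each term. For injectivity on the left, the inclusion $\Lambda_E(K_\infty) \subseteq U'(E/\cO_K)$ holds because $\exp_E(\Lambda_E(K_\infty)) = 0 \in E(\cO_K)$, and $\Lambda_E(K_\infty) \hookrightarrow \Lie_E(K_\infty)$ is injective by definition. For exactness in the middle, the kernel of $\exp_E|_{U'}$ equals $\ker(\exp_E) \cap U'(E/\cO_K) = \Lambda_E(K_\infty)$, since $\Lambda_E(K_\infty) \subseteq U'(E/\cO_K)$ as just observed. For surjectivity on the right, take $x \in U(E/\cO_K)$; then $x \in E(\cO_K)$ and $x = \exp_E(y)$ for some $y \in \Lie_E(K_\infty)$, and this $y$ lies in $U'(E/\cO_K) = \exp_E^{-1}(E(\cO_K))$ by construction.

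There is really no main obstacle here: the statement is a formal diagram chase once the two unit groups are rewritten as the intersection and the preimage displayed above. The only place one might stumble is conflating $\exp_E$ viewed as a map from $\Lie_E(K_\infty)$ with $\exp_E$ viewed as a map from $K_\infty$ itself (as in the shorthand of Definition 3.1), so in the write-up I would be slightly pedantic about domains to keep the inclusions $\Lambda_E(K_\infty) \subset U'(E/\cO_K) \subset \Lie_E(K_\infty)$ and $U(E/\cO_K) \subset E(\cO_K) \subset E(K_\infty)$ unambiguous.
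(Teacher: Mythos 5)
Your proof is correct, and it is essentially the paper's argument unwound by hand: the paper obtains the same sequence in one step by applying the snake lemma to the diagram with vertical maps $\exp_E: \Lie_E(K_\infty) \to E(K_\infty)$ and $\Lie_E(K_\infty) \to E(K_\infty)/E(\cO_K)$ over the bottom row $0 \to E(\cO_K) \to E(K_\infty) \to E(K_\infty)/E(\cO_K) \to 0$, and your explicit identifications $U = E(\cO_K) \cap \exp_E(\Lie_E(K_\infty))$, $U' = \exp_E^{-1}(E(\cO_K))$ together with the element chase are exactly what that snake-lemma application encodes (your map $\exp_E|_{U'}$ is its connecting homomorphism).
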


\begin{proof}
We obtain this lemma by applying the snake lemma to the diagram
\[
\xymatrix{
	& & \Lie_E(K_{\infty}) \ar@{=}[r] \ar[d]^{\exp_E}
	& \Lie_E(K_{\infty}) \ar[d]^{\exp_E}
	& \\
	0 \ar[r]
	& E(\cO_K) \ar[r]
	& E(K_{\infty}) \ar[r]
	& \dfrac{E(K_{\infty})}{E(\cO_K)} \ar[r]
	& 0.
}
\]
\end{proof}

\begin{prop}\label{prop:UH_rank}
The following hold.
\begin{itemize}
\item[(1)]
The $A$-module $H(E/\cO_K)$ is finite.
\item[(2)]
The $A$-module $U(E/\cO_K)$ is finitely generated of rank $[K: Q] - r_E(K)$.
\end{itemize}
\end{prop}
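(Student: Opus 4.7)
The plan is to argue topologically in $E(K_{\infty})$, exploiting two structural facts: $\exp_E$ is a local $A$-equivariant homeomorphism near $0$ (at each infinite place $v$ the series $\exp_E(X) = X + \sum_{i \geq 1} e_i X^{q^i}$ has linear coefficient $1$ and converges on a neighborhood of $0$), and $\cO_K$ is a discrete cocompact $A$-submodule of $K_{\infty}$ (the function-field analog of $\cO_K \subset K \otimes_{\Q} \R$, which reduces to the compactness of $Q_{\infty}/A$).

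For (1), $\exp_E(\Lie_E(K_{\infty}))$ is an open subgroup of $E(K_{\infty})$ by the local-homeomorphism property, so $E(\cO_K) + \exp_E(\Lie_E(K_{\infty}))$ is open and $H(E/\cO_K)$ is \emph{discrete}. On the other hand $E(\cO_K) \subset E(K_{\infty})$ is cocompact, so $H(E/\cO_K)$ is \emph{compact} as a quotient of $E(K_{\infty})/E(\cO_K)$. A Hausdorff topological group that is simultaneously compact and discrete is finite.

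For (2), I argue through $U'(E/\cO_K)$ and translate via Lemma \ref{lem:rel_UU'}. The module $U'(E/\cO_K) = \exp_E^{-1}(E(\cO_K))$ is discrete in $\Lie_E(K_{\infty})$ (local homeomorphism applied to a discrete target), and the $\exp_E$-induced injection
\[
\Lie_E(K_{\infty})/U'(E/\cO_K) \hookrightarrow E(K_{\infty})/E(\cO_K)
\]
has open, finite-index image with cokernel $H(E/\cO_K)$, so its source is compact. Hence $U'(E/\cO_K)$ is a discrete cocompact $A$-submodule of $K_{\infty}$. Invoking the function-field version of Minkowski's lattice theorem---any such submodule is finitely generated and torsion-free of $A$-rank $[K : Q]$, as proved in \cite[Theorem 1]{Tae10} or \cite[\S 7.6]{Pap23}---and combining with Lemma \ref{lem:rel_UU'} together with $\rank_A \Lambda_E(K_{\infty}) = r_E(K)$ gives $\rank_A U(E/\cO_K) = [K : Q] - r_E(K)$, with finite generation of $U(E/\cO_K)$ inherited as a quotient of $U'(E/\cO_K)$.

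The main obstacle is the structure theorem for discrete cocompact $A$-submodules of $K_{\infty}$. Since $K_{\infty}$ is infinite-dimensional as an $\F_q$-vector space, the classical ``discrete cocompact in $\R^n$'' argument does not apply directly; one must genuinely use the $A$-module structure, for instance via a commensurability argument with the model lattice $\cO_K$ (both submodules span $K_{\infty}$ over $Q_{\infty}$ because they are cocompact).
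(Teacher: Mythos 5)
Your proposal is correct and follows essentially the same route as the paper: both reduce (2) to the fact that $U'(E/\cO_K)$ is a discrete, cocompact $A$-submodule (lattice) of $\Lie_E(K_{\infty})$ and then apply Lemma \ref{lem:rel_UU'} together with $\rank_A \Lambda_E(K_{\infty}) = r_E(K)$. The only difference is organizational: the paper restricts scalars to $\F_q[t]$ and cites Taelman's Theorem 1 for the finiteness of $H(E/\cO_K)$ and the lattice property of $U'(E/\cO_K)$, whereas you unfold the topological content of that citation (openness of $\exp_E(\Lie_E(K_{\infty}))$, compactness of $K_{\infty}/\cO_K$), which is precisely Taelman's original argument.
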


\begin{proof}
Take a non-constant element $t\in A\setminus \F_q$.
Restricting $E$ to the subring $\F_q[t]$ of $A$, we can regard $E$ as a Drinfeld $\F_q[t]$-module over $\cO_K$.
Then its exponential map coincides with the original one by uniqueness.
Hence its Taelman class group and unit groups are also the same as $H(E/\cO_K), U'(E/\cO_K)$, and $U(E/\cO_K)$ viewed as $\F_q[t]$-modules, respectively.

(1) By this argument, we may assume that $A=\F_q[t]$.
Then the claim is proved by Taelman \cite[Theorem 1]{Tae10}, which is reproduced in \cite[Theorem 7.6.17(1)]{Pap23}.

(2)
When $E$ is the Carlitz module, this is proved in \cite[Proposition 1]{Tae10}.
The general case can be shown in the same way, as follows.
For $E$ viewed as a Drinfeld $\F_q[t]$-module, 
Taelman \cite[Theorem 1]{Tae10} proved that $U'(E/\cO_K)$ is an $\F_q[t]$-lattice of $\Lie_E(K_{\infty})$ (i.e., a discrete and co-compact $\F_q[t]$-submodule).
Therefore, it is also an $A$-lattice of $\Lie_E(K_{\infty})$, so it is a projective $A$-module of rank $[K: Q]$ (one may refer to \cite[Theorem 7.6.17(2), Corollary 7.6.18]{Pap23}).
Then we obtain the claim by Lemma \ref{lem:rel_UU'}, noting that the $A$-rank of $\Lambda_E(K_{\infty})$ is $r_E(K)$ by definition.
\end{proof}

\subsection{Modifications with moduli}\label{ss:clgp_ram}

We introduce the concept of unit groups and class groups with moduli.

\begin{defn}\label{defn:clgp_moduli}
Let $\ff$ be a nonzero ideal of $\cO_K$.
We regard $\ff$ as a modulus $\prod_{v} v^{\ord_v(\ff)}$ of $K$, where $v$ runs over the finite places of $K$.
Then we define the unit group $U_{\ff}(E/\cO_K)$ and class group $H_{\ff}(E/\cO_K)$ as, respectively, the kernel and cokernel of the natural $A$-homomorphism
\[
E(K) \to \frac{E(K_{\infty})}{\exp_E(K_{\infty})} \oplus \bigoplus_v \frac{E(K_v)}{E(\ff \cO_{K, v})}
\]
induced by the diagonal map.
Therefore, we have an exact sequence
\begin{equation}\label{eq:fund_ex_1}
0 \to U_{\ff}(E/\cO_K)
\to E(K) 
\to \frac{E(K_{\infty})}{\exp_E(K_{\infty})} \oplus \bigoplus_v \frac{E(K_v)}{E(\ff \cO_{K, v})}
\to H_{\ff}(E/\cO_K)
\to 0
\end{equation}
of $A$-modules.
\end{defn}

\begin{rem}\label{rem:clgp_var}
We have another description of $U_{\ff}(E/\cO_K)$ and $H_{\ff}(E/\cO_K)$.
Let $S$ be a finite set of finite places of $K$ that contains the support of $\ff$ (i.e., $\ord_v(\ff) = 0$ unless $v \in S$).
Let $\cO_{K, S}$ be the ring of $S$-integers of $K$.
Then $U_{\ff}(E/\cO_K)$ and $H_{\ff}(E/\cO_K)$ can be defined as the kernel and cokernel of the homomorphism
\[
E(\cO_{K, S}) \to \frac{E(K_{\infty})}{\exp_E(K_{\infty})} \oplus \bigoplus_{v \in S} \frac{E(K_v)}{E(\ff \cO_{K, v})}.
\]
This is because the natural homomorphism
\[
\frac{K}{\cO_{K, S}} \to \bigoplus_{v \not \in S} \frac{K_v}{\cO_{K, v}}
\]
is an isomorphism.
In particular, when $\ff = (1)$ is the trivial modulus, we may take $S = \emptyset$, so $U_{(1)}(E/\cO_K) = U(E/\cO_K)$ and $H_{(1)}(E/\cO_K) = H(E/\cO_K)$.
\end{rem}

Let us study the effect of changing the moduli.

\begin{prop}\label{prop:compar1}
For moduli $\ff, \fg$ of $K$ with $\fg \mid \ff$ (i.e., $\fg$ is a divisor of $\ff$), we have an exact sequence
\[
0 \to U_{\ff}(E/\cO_K) 
\to U_{\fg}(E/\cO_K)
\to \bigoplus_v \frac{E(\fg \cO_{K, v})}{E(\ff \cO_{K, v})}
\to H_{\ff}(E/\cO_K)
\to H_{\fg}(E/\cO_K)
\to 0.
\]
\end{prop}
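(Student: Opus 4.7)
I would deduce the proposition as a direct application of the snake lemma to the two four-term exact sequences of Definition \ref{defn:clgp_moduli}, one attached to $\ff$ and one to $\fg$. Set
\[
B_\ff := \frac{E(K_\infty)}{\exp_E(K_\infty)} \oplus \bigoplus_v \frac{E(K_v)}{E(\ff \cO_{K,v})},
\]
define $B_\fg$ analogously, and write $\alpha_\ff \colon E(K) \to B_\ff$ and $\alpha_\fg \colon E(K) \to B_\fg$ for the respective diagonal maps, whose kernels and cokernels are $U_\ff, U_\fg$ and $H_\ff, H_\fg$ by definition.

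The hypothesis $\fg \mid \ff$ means $\ord_v(\ff) \geq \ord_v(\fg)$ for every finite place $v$, hence $\ff \cO_{K,v} \subseteq \fg \cO_{K,v}$ and therefore $E(\ff \cO_{K,v}) \subseteq E(\fg \cO_{K,v})$. At each $v$ this yields a short exact sequence
\[
0 \to \frac{E(\fg \cO_{K,v})}{E(\ff \cO_{K,v})} \to \frac{E(K_v)}{E(\ff \cO_{K,v})} \to \frac{E(K_v)}{E(\fg \cO_{K,v})} \to 0,
\]
and since the infinite components of $B_\ff$ and $B_\fg$ agree, taking the direct sum gives a natural short exact sequence
\[
0 \to \bigoplus_v \frac{E(\fg \cO_{K,v})}{E(\ff \cO_{K,v})} \to B_\ff \to B_\fg \to 0.
\]
Because $\alpha_\ff$ and $\alpha_\fg$ are both induced by the diagonal embedding of $E(K)$, the surjection $B_\ff \twoheadrightarrow B_\fg$ intertwines them.

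The final step is to assemble the commutative diagram whose top row is the tautological short exact sequence $0 \to 0 \to E(K) \xrightarrow{\id} E(K) \to 0$, whose bottom row is the displayed short exact sequence above, and whose three vertical arrows are $0$, $\alpha_\ff$, and $\alpha_\fg$. The snake lemma then produces a six-term exact sequence whose three kernels are $0, U_\ff(E/\cO_K), U_\fg(E/\cO_K)$ and whose three cokernels are $\bigoplus_v E(\fg \cO_{K,v})/E(\ff \cO_{K,v})$, $H_\ff(E/\cO_K)$, $H_\fg(E/\cO_K)$; these assemble into precisely the sequence asserted by the proposition. No substantial obstacle is expected: the only verifications are the exactness of the bottom row (already argued) and the commutativity of the diagram (immediate from the naturality of the quotient maps and of the diagonal inclusions), and one may note in passing that the connecting homomorphism $U_\fg \to \bigoplus_v E(\fg \cO_{K,v})/E(\ff \cO_{K,v})$ unwinds to the expected componentwise reduction $u \mapsto (u \bmod E(\ff \cO_{K,v}))_v$, which is well-defined precisely because $U_\fg$ consists of global points lying in every $E(\fg \cO_{K,v})$.
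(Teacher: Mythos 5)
Your proposal is correct and follows essentially the same route as the paper: the paper compares the two defining four-term sequences via the natural surjection between the middle terms (whose kernel is $\bigoplus_v E(\fg\cO_{K,v})/E(\ff\cO_{K,v})$) and reads off the six-term sequence, which is exactly the snake-lemma computation you spell out. Your explicit identification of the connecting map is a harmless elaboration of what the paper leaves implicit.
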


\begin{proof}
Consider the natural commutative diagram
\[
\xymatrix{
	0 \ar[r]
	& U_{\ff}(E/\cO_K) \ar[r] \ar[d]
	& E(K) \ar[r] \ar@{=}[d]
	& \frac{E(K_{\infty})}{\exp_E(K_{\infty})} \oplus \bigoplus_v \frac{E(K_v)}{E(\ff \cO_{K, v})} \ar[r] \ar[d]^{(*)}
	& H_{\ff}(E/\cO_K) \ar[r] \ar[d]
	& 0\\
	0 \ar[r]
	& U_{\fg}(E/\cO_K) \ar[r]
	& E(K) \ar[r]
	& \frac{E(K_{\infty})}{\exp_E(K_{\infty})} \oplus \bigoplus_v \frac{E(K_v)}{E(\fg \cO_{K, v})} \ar[r]
	& H_{\fg}(E/\cO_K) \ar[r]
	& 0.
}
\]
The map $(*)$ is clearly surjective and its kernel is $\bigoplus_v \frac{E(\fg \cO_{K, v})}{E(\ff \cO_{K, v})}$.
Then this diagram yields the claimed exact sequence.
\end{proof}

\begin{prop}\label{prop:UH_rank2}
For a modulus $\ff$ of $K$, the following hold.
\begin{itemize}
\item[(1)]
The $A$-module $H_{\ff}(E/\cO_K)$ is finite.
\item[(2)]
The $A$-module $U_{\ff}(E/\cO_K)$ is finitely generated of rank $[K: Q] - r_E(K)$.
\end{itemize}
\end{prop}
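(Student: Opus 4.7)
The idea is to compare with the unramified case already settled in Proposition \ref{prop:UH_rank} by applying Proposition \ref{prop:compar1} to the pair $(\ff,\fg)=(\ff,(1))$. Combined with Remark \ref{rem:clgp_var}, which identifies $U_{(1)}(E/\cO_K)=U(E/\cO_K)$ and $H_{(1)}(E/\cO_K)=H(E/\cO_K)$, this yields an exact sequence of $A$-modules
\[
0 \to U_{\ff}(E/\cO_K)
\to U(E/\cO_K)
\to \bigoplus_v \frac{E(\cO_{K,v})}{E(\ff \cO_{K, v})}
\to H_{\ff}(E/\cO_K)
\to H(E/\cO_K)
\to 0.
\]
Both end terms are already understood: $H(E/\cO_K)$ is finite and $U(E/\cO_K)$ is finitely generated of rank $[K:Q]-r_E(K)$ by Proposition \ref{prop:UH_rank}. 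Hence everything reduces to analysing the middle term.

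\textbf{The key step} is to show that $M_{\ff}:=\bigoplus_v E(\cO_{K,v})/E(\ff\cO_{K,v})$ is a finite $A$-module. For $v$ outside the support of $\ff$, we have $\ord_v(\ff)=0$, so $\ff\cO_{K,v}=\cO_{K,v}$ and the corresponding summand vanishes; thus the sum is finite. For $v$ in the (finite) support of $\ff$, the short exact sequence of $\cO_K\{\tau\}$-modules
\[
0 \to \ff\cO_{K,v} \to \cO_{K,v} \to \cO_{K,v}/\ff\cO_{K,v} \to 0
\]
is sent by the exact functor $E$ (recalled in \S\ref{ss:Drinf}) to a short exact sequence of $A$-modules, so
\[
E(\cO_{K,v})/E(\ff\cO_{K,v}) \;\simeq\; E(\cO_{K,v}/\ff\cO_{K,v}),
\]
whose underlying set is $\cO_{K,v}/\ff\cO_{K,v}$, a finite set. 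Hence $M_{\ff}$ is finite.

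\textbf{Conclusion of (1) and (2).} For (1), the exact sequence displays $H_{\ff}(E/\cO_K)$ as an extension of (a subquotient of) the finite module $H(E/\cO_K)$ by (a quotient of) the finite module $M_{\ff}$, hence it is finite. For (2), the exact sequence shows that $U_{\ff}(E/\cO_K)$ is an $A$-submodule of the finitely generated module $U(E/\cO_K)$, hence is itself finitely generated since $A$ is Noetherian; moreover the cokernel of $U_{\ff}(E/\cO_K)\hookrightarrow U(E/\cO_K)$ embeds into the finite module $M_{\ff}$ and is therefore a torsion $A$-module. Consequently $U_{\ff}(E/\cO_K)$ and $U(E/\cO_K)$ have the same $A$-rank, namely $[K:Q]-r_E(K)$. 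I do not expect any real obstacle here; the only mildly delicate point is the identification $E(\cO_{K,v})/E(\ff\cO_{K,v}) \simeq E(\cO_{K,v}/\ff\cO_{K,v})$, which is a direct consequence of the exactness of the scalar restriction functor $E$.
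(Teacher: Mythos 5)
Your proof is correct and follows essentially the same route as the paper: reduce to the case $\ff=(1)$ (Proposition \ref{prop:UH_rank}) via the exact sequence of Proposition \ref{prop:compar1} with $\fg=(1)$. The paper leaves the finiteness of $\bigoplus_v E(\cO_{K,v})/E(\ff\cO_{K,v})$ implicit, whereas you verify it via the exactness of the functor $E$; that is the only difference.
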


\begin{proof}
The case $\ff = (1)$ is the same as Proposition \ref{prop:UH_rank}.
Then the general case follows from Proposition \ref{prop:compar1} applied to $\fg = (1)$.
\end{proof}

\subsection{$S$-modifications}\label{ss:clgp_lim}

Let $\hat{A}$ be the profinite completion of $A$.
We have a canonical isomorphism $\hat{A} \simeq \prod_{\fp} A_{\fp}$, where $\fp$ runs over the primes of $A$.
For each modulus $\ff$ of $K$, we write $\hat{U}_{\ff}(E/\cO_K) = \hat{A} \otimes_A U_{\ff}(E/\cO_K)$ for the profinite completion of $U_{\ff}(E/\cO_K)$.

\begin{defn}\label{defn:clgp_moduli2}
Let $S$ be a finite set of finite places of $K$.
We set
\[
\hat{U}_S(E/\cO_K) = \varprojlim_{\ff} \hat{U}_{\ff}(E/\cO_K),
\qquad
H_S(E/\cO_K) = \varprojlim_{\ff} H_{\ff}(E/\cO_K),
\]
where $\ff$ runs over the moduli of $K$ whose supports are contained in $S$.
Here, both $\hat{U}_{\ff}(E/\cO_K)$ and $H_{\ff}(E/\cO_K)$ are projective systems with respect to $\ff$ by Proposition \ref{prop:compar1}.
\end{defn}

We have
\[
\hat{U}_{\emptyset}(E/\cO_K) = \hat{A} \otimes_A U(E/\cO_K),
\quad
H_{\emptyset}(E/\cO_K) = H(E/\cO_K).
\]
For each finite place $v$ of $K$, we see that $E(\cO_{K, v})$ is finitely generated over $\hat{A}$ by Proposition \ref{prop:loc_str}.

\begin{prop}\label{prop:compar2}
For finite sets $S, T$ of finite places of $K$ with $T \subset S$, we have an exact sequence
\[
0 \to \hat{U}_{S}(E/\cO_K) 
\to \hat{U}_{T}(E/\cO_K)
\to \bigoplus_{v \in S \setminus T} E(\cO_{K, v})
\to H_{S}(E/\cO_K)
\to H_{T}(E/\cO_K)
\to 0
\]
of $\hat{A}$-modules.
\end{prop}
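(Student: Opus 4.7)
The plan is to deduce the statement from Proposition \ref{prop:compar1} by taking inverse limits along a cofinal sequence of modulus pairs, with a compactness argument used to ensure exactness survives the limit.

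First, I would set $\ff_m := \prod_{v \in S} v^m$ and $\ff'_m := \prod_{v \in T} v^m$ for $m \geq 0$. These exhaust the moduli supported in $S$ and $T$ cofinally, and $\ff'_m \mid \ff_m$. Applying Proposition \ref{prop:compar1} to the pair $(\ff_m, \ff'_m)$, the middle direct sum $\bigoplus_v E(\ff'_m \cO_{K,v})/E(\ff_m \cO_{K,v})$ vanishes for $v \notin S$ and for $v \in T$ (since $\ff'_m \cO_{K,v} = \ff_m \cO_{K,v}$ in both cases), so it collapses to the finite $A$-module
\[
N_m := \bigoplus_{v \in S \setminus T} E(\cO_{K,v})/E(\fm_{K,v}^m).
\]
Tensoring the resulting $5$-term sequence with $\hat{A}$ — an exact operation on finitely generated $A$-modules, since each $A_{\fp}$ is $A$-flat and arbitrary products of abelian groups are exact, and all terms are finitely generated by Proposition \ref{prop:UH_rank2} together with finiteness — yields an exact sequence of $\hat{A}$-modules
\[
0 \to \hat{U}_{\ff_m} \to \hat{U}_{\ff'_m} \to N_m \to H_{\ff_m} \to H_{\ff'_m} \to 0.
\]

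Next, I would pass to $\lim_m$. By cofinality, $\lim_m \hat{U}_{\ff_m} = \hat{U}_S$, $\lim_m \hat{U}_{\ff'_m} = \hat{U}_T$, $\lim_m H_{\ff_m} = H_S$, and $\lim_m H_{\ff'_m} = H_T$, while $\lim_m N_m = \bigoplus_{v \in S \setminus T} E(\cO_{K,v})$ because each $E(\cO_{K,v})$ is $\fm_{K,v}$-adically complete.

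The main obstacle is that the inverse limit of a long exact sequence need not be exact. To address this I would split the $5$-term sequence into three short exact pieces. The two on the right involve only finite $A$-modules, so Mittag-Leffler is automatic and the limits remain exact. For the leftmost piece $0 \to \hat{U}_{\ff_m} \to \hat{U}_{\ff'_m} \to Y_m \to 0$ (with $Y_m \subset N_m$ the finite image), the key observation is that $\hat{A}$ is a compact topological ring — each $A_{\fp}$ is a complete DVR with finite residue field, and Tychonoff applies to the product — so every finitely generated $\hat{A}$-module, and in particular both $\hat{U}_{\ff_m}$ and $\hat{U}_{\ff'_m}$, is compact Hausdorff. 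The inverse limit of a short exact sequence of compact Hausdorff abelian groups with continuous transition maps is exact: given a coherent system in $\lim_m Y_m$, its preimages in the $\hat{U}_{\ff'_m}$ form a non-empty inverse system of closed subsets of compact spaces, and the finite intersection property provides a compatible lift. Splicing the three limit sequences back together delivers the stated $5$-term exact sequence.
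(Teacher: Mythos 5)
Your proof is correct and follows essentially the same route as the paper: apply Proposition \ref{prop:compar1}, use exactness of $\hat{A}\otimes_A(-)$ on finitely generated $A$-modules, and pass to the projective limit, where compactness of the (finitely generated $\hat{A}$-module) terms guarantees that exactness survives. You simply spell out the details the paper leaves implicit — the cofinal choice of moduli, the identification of the limit of the middle term, and the splitting into short exact sequences with the Mittag--Leffler/compactness argument — all of which are sound.
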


\begin{proof}
First we apply the exact functor $\hat{A} \otimes_A (-)$ to the sequence in Proposition \ref{prop:compar1}.
The resulting sequence consists of finitely generated $\hat{A}$-modules, so they are all compact.
Therefore, taking the projective limit also preserves exactness, and we obtain the proposition.
\end{proof}

\begin{prop}\label{prop:UH_rank3}
Both $\hat{U}_S(E/\cO_K)$ and $H_S(E/\cO_K)$ are finitely generated over $\hat{A}$.
\end{prop}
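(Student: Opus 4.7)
The plan is to apply Proposition \ref{prop:compar2} with $T = \emptyset$, yielding the five-term exact sequence of $\hat{A}$-modules
\[
0 \to \hat{U}_S(E/\cO_K) \to \hat{U}_\emptyset(E/\cO_K) \to \bigoplus_{v \in S} E(\cO_{K,v}) \to H_S(E/\cO_K) \to H_\emptyset(E/\cO_K) \to 0.
\]
By Proposition \ref{prop:UH_rank}, the $\hat{A}$-module $\hat{U}_\emptyset(E/\cO_K) = \hat{A} \otimes_A U(E/\cO_K)$ is finitely generated and $H_\emptyset(E/\cO_K) = H(E/\cO_K)$ is even finite. Moreover, Proposition \ref{prop:loc_str} together with the finiteness of $E(\cO_{K,v})/E(\fm_{K,v}) \simeq E(\kappa_v(K))$ shows that each $E(\cO_{K,v})$, for $v \mid \fp$, is finitely generated over $A_\fp$, hence over $\hat{A}$, as already remarked in the paragraph preceding Proposition \ref{prop:compar2}.

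For $H_S(E/\cO_K)$ the conclusion is immediate: it is an extension of the finite module $H_\emptyset(E/\cO_K)$ by a quotient of the finitely generated $\hat{A}$-module $\bigoplus_{v \in S} E(\cO_{K,v})$, and hence is itself finitely generated over $\hat{A}$.

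The main obstacle lies with $\hat{U}_S(E/\cO_K)$, since $\hat{A} \simeq \prod_\fp A_\fp$ fails to be Noetherian and submodules of finitely generated $\hat{A}$-modules need not be finitely generated. To overcome this, let $P$ denote the finite set of primes $\fp$ of $A$ lying below some place of $S$, and split $\hat{A} = A_P \times A^P$ with $A_P := \prod_{\fp \in P} A_\fp$ (a finite product of complete discrete valuation rings, hence Noetherian) and $A^P := \prod_{\fp \notin P} A_\fp$. The key observation is that $\bigoplus_{v \in S} E(\cO_{K,v})$ is naturally an $A_P$-module, since each summand is supported at a prime of $P$, so the map $\hat{U}_\emptyset(E/\cO_K) \to \bigoplus_{v \in S} E(\cO_{K,v})$ vanishes on the $A^P$-component. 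Splitting $\hat{U}_\emptyset(E/\cO_K) = (A_P \otimes_A U(E/\cO_K)) \oplus (A^P \otimes_A U(E/\cO_K))$ accordingly, we obtain a corresponding decomposition
\[
\hat{U}_S(E/\cO_K) \simeq W \oplus (A^P \otimes_A U(E/\cO_K)),
\]
where $W$ is the kernel of the restricted map $A_P \otimes_A U(E/\cO_K) \to \bigoplus_{v \in S} E(\cO_{K,v})$. The second summand is finitely generated over $A^P$ since $U(E/\cO_K)$ is finitely generated over $A$; the first summand is a submodule of a finitely generated module over the Noetherian ring $A_P$, so is also finitely generated. Both are therefore finitely generated over $\hat{A}$, completing the argument.
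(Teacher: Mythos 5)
Your proof follows the same route as the paper: reduce to $S=\emptyset$ via Proposition \ref{prop:compar2} with $T=\emptyset$ and Proposition \ref{prop:UH_rank}. The treatment of $H_S(E/\cO_K)$ is correct, and your decision to address the non-Noetherianity of $\hat{A}\simeq\prod_\fp A_\fp$ for the unit group is a legitimate point that the paper's two-line proof leaves implicit (a submodule of a finitely generated module over a non-Noetherian ring need not be finitely generated, so \emph{some} argument is required for $\hat{U}_S$).

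However, your resolution contains an inaccuracy. With $P$ defined as the set of primes of $A$ lying below places of $S$, the module $\bigoplus_{v\in S}E(\cO_{K,v})$ is \emph{not} in general an $A_P$-module: Proposition \ref{prop:loc_str} only makes $E(\fm_{K,v})$ an $A_\fp$-module for $v\mid\fp$, while the finite quotient $E(\cO_{K,v})/E(\fm_{K,v})\simeq E(\kappa_v(K))$ is merely a finite $A$-module and is typically supported at primes $\fq\neq\fp$ (e.g.\ for the Carlitz module, $C(A/\fp)\simeq A/(P-1)$ with $P$ the monic generator of $\fp$, whose support is prime to $\fp$). Consequently the map $\hat{U}_\emptyset(E/\cO_K)\to\bigoplus_{v\in S}E(\cO_{K,v})$ need not vanish on the $A^P$-component, and the asserted splitting $\hat{U}_S(E/\cO_K)\simeq W\oplus(A^P\otimes_A U(E/\cO_K))$ fails as stated. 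The repair is immediate: since each $E(\cO_{K,v})$ is a finitely generated $\hat{A}$-module supported at only finitely many primes (namely $\fp$ together with the support of the finite module $E(\kappa_v(K))$), enlarge $P$ to this finite support set; the rest of your argument then goes through verbatim. Alternatively, one can argue directly that over $\hat{A}=\prod_\fp A_\fp$ every finitely generated module splits as a product of its $\fp$-components, each generated by a uniformly bounded number of elements, and submodules over the discrete valuation rings $A_\fp$ obey the same uniform bound, so kernels of maps of finitely generated $\hat{A}$-modules are finitely generated after all.
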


\begin{proof}
When $S = \emptyset$, the claim follows from Proposition \ref{prop:UH_rank}; $H_{\emptyset}(E/\cO_K)$ is moreover finite.
Then the general case follows from Proposition \ref{prop:compar2} applied to $T = \emptyset$.
\end{proof}

Note that $H_S(E/\cO_K)$ can be infinite if $S$ is large; see Proposition \ref{prop:LC_rank} below.

\subsection{Functoriality with respect to field extensions}\label{ss:funct}

Let $K'/K$ be a finite extension.
We regard $K'$ as a global $Q$-field in the obvious way.
We also regard the given Drinfeld $A$-module $E$ over $\cO_K$ as one over $\cO_{K'}$. 

For each place $v$ of $K$, we write $K'_v = K_v \otimes_K K'$, which is isomorphic to the product of the completions of $K'$ at places lying above $v$.
When $v$ is a finite place of $K$, we also have $\fm_{K', v} \subset \cO_{K', v} \subset K'_v$ and $\kappa_v(K') = \cO_{K', v}/\fm_{K', v}$ in a similar way ($\kappa_v(K')$ is a product of finite fields).

We have the inclusion map $\iota_{K'/K}: K \hookrightarrow K'$ and trace map $\Tr_{K'/K}: K' \to K$.
By base change, these maps also induce $\iota_{K'/K}: K_v \to K'_v$ and $\Tr_{K'/K}: K'_v \to K_v$ for any place $v$ of $K$.
When $K'/K$ is inseparable, the trace map is the zero map, which is useless.
When $K'/K$ is separable, the trace map is known to be surjective and, as a result, for a nonzero ideal $\ff'$ of $\cO_{K'}$, the image $\Tr_{K'/K}(\ff')$ is again a nonzero ideal of $\cO_K$.

In this subsection, we observe that these maps $\iota_{K'/K}$ and $\Tr_{K'/K}$ induce maps between the unit groups and class groups.
We first deal with the setting of \S \ref{ss:clgp_ram}.

\begin{defn}\label{defn:fld_ext}
Let $\ff$ and $\ff'$ be moduli of $K$ and $K'$, respectively.

\begin{itemize}
\item[(1)]
When $\ff \cO_{K'} \subset \ff'$, we define natural maps
\[
U_{\ff}(E/\cO_K) \hookrightarrow U_{\ff'}(E/\cO_{K'}),
\quad
H_{\ff}(E/\cO_K) \to H_{\ff'}(E/\cO_{K'})
\]
by the commutative diagram
\[
\xymatrix{
	0 \ar[r]
	& U_{\ff'}(E/\cO_{K'}) \ar[r]
	& E(K') \ar[r]
	& \frac{E(K'_{\infty})}{\exp_E(K'_{\infty})} \oplus \bigoplus_v \frac{E(K'_v)}{E(\ff' \cO_{K', v})} \ar[r]
	& H_{\ff'}(E/\cO_{K'}) \ar[r]
	& 0\\
	0 \ar[r]
	& U_{\ff}(E/\cO_K) \ar[r] \ar@{^(->}[u]
	& E(K) \ar@{^(->}[u]_{\iota_{K'/K}} \ar[r]
	& \frac{E(K_{\infty})}{\exp_E(K_{\infty})} \oplus \bigoplus_v \frac{E(K_v)}{E(\ff \cO_{K, v})} \ar[r] \ar[u]_{\iota_{K'/K}}
	& H_{\ff}(E/\cO_K) \ar[r] \ar[u]
	& 0.
}
\]

\item[(2)]
Suppose $K'/K$ is separable.
When $\ff \supset \Tr_{K'/K}(\ff')$, we define natural maps
\[
U_{\ff'}(E/\cO_{K'}) \to U_{\ff}(E/\cO_K),
\quad
H_{\ff'}(E/\cO_{K'}) \twoheadrightarrow H_{\ff}(E/\cO_K)
\]
by the commutative diagram
\[
\xymatrix{
	0 \ar[r]
	& U_{\ff'}(E/\cO_{K'}) \ar[r] \ar[d]
	& E(K') \ar[r] \ar@{->>}[d]_{\Tr_{K'/K}}
	& \frac{E(K'_{\infty})}{\exp_E(K'_{\infty})} \oplus \bigoplus_v \frac{E(K'_v)}{E(\ff' \cO_{K', v})} \ar[r] \ar@{->>}[d]_{\Tr_{K'/K}}
	& H_{\ff'}(E/\cO_{K'}) \ar[r] \ar@{->>}[d]
	& 0\\
	0 \ar[r]
	& U_{\ff}(E/\cO_K) \ar[r]
	& E(K) \ar[r]
	& \frac{E(K_{\infty})}{\exp_E(K_{\infty})} \oplus \bigoplus_v \frac{E(K_v)}{E(\ff \cO_{K, v})} \ar[r]
	& H_{\ff}(E/\cO_K) \ar[r]
	& 0.
}
\]
\end{itemize}
\end{defn}

We now show a Galois descent property.

\begin{prop}\label{prop:descent1}
Let $K'/K$ be a finite Galois extension and $G$ its Galois group.
Let $\ff'$ be a modulus of $K'$ that is $G$-stable.
Set $\ff = \Tr_{K'/K}(\ff')$, which we regard as a modulus of $K$.
Then $H_{\ff'}(E/\cO_{K'})$ is naturally a $G$-module, and its $G$-coinvariant satisfies an isomorphism
\[
H_{\ff'}(E/\cO_{K'})_G \simeq H_{\ff}(E/\cO_K)
\]
as $A$-modules, induced by the map in Definition \ref{defn:fld_ext}(2).
\end{prop}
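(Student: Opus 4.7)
The plan is to apply the right-exact functor of $G$-coinvariants to the defining exact sequence of $H_{\ff'}(E/\cO_{K'})$. Writing $V(L) := E(L_\infty)/\exp_E(L_\infty) \oplus \bigoplus_v E(L_v)/E(\ff_L \cO_{L, v})$ for $L \in \{K, K'\}$ with the corresponding modulus, we have $H_{\ff_L}(E/\cO_L) = \Coker(\phi_L : E(L) \to V(L))$, with $\phi_L$ the diagonal. Since $\ff'$ is $G$-stable and $\exp_E$ has coefficients in $K$, the map $\phi'$ is $G$-equivariant and $H_{\ff'}(E/\cO_{K'})$ inherits a natural $G$-action. The hypothesis $\ff = \Tr_{K'/K}(\ff')$ yields, after localization, that $\Tr_{K'/K}(\ff' \cO_{K', v}) = \ff \cO_{K, v}$ for each finite $v$, so componentwise trace induces a well-defined $\Tr_{K'/K} : V(K') \to V(K)$ compatible with the two diagonals. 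This produces the map of Definition~\ref{defn:fld_ext}(2), which after passage to coinvariants becomes the candidate $(H_{\ff'}(E/\cO_{K'}))_G \to H_\ff(E/\cO_K)$.

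The key input is $G$-cohomological triviality. By the normal basis theorem, $K'$ is a free $K[G]$-module of rank one, hence cohomologically trivial as an $\F_q[G]$-module; base change gives the same for $K'_\infty = K_\infty \otimes_K K'$ and for each $K'_v = K_v \otimes_K K'$. Since the functor $E$ preserves the underlying $\F_q[G]$-structure (as noted in \S\ref{ss:Drinf}), each of $E(K'), E(K'_\infty), E(K'_v)$ is $G$-cohomologically trivial. To propagate c.t.~through the quotients comprising $V(K')$, one handles each summand: at each finite place $v$, $\bigoplus_{w \mid v} E(K'_w)/E(\ff' \cO_{K'_w})$ is induced from the decomposition group $G_w$ acting on $E(K'_w)/E(\ff' \cO_{K'_w})$, so by Shapiro's lemma it suffices to verify $G_w$-c.t.~of the local quotient, which follows from c.t.~of $E(K'_w) = K'_w$ and the $G_w$-stability of the submodule together with the local trace identity; at infinity, splicing the exact sequence $0 \to \Lambda_E(K'_\infty) \to \Lie_E(K'_\infty) \xrightarrow{\exp_E} E(K'_\infty) \to E(K'_\infty)/\exp_E(K'_\infty) \to 0$ and using c.t.~of the two ambient $\F_q[G]$-modules $\Lie_E(K'_\infty), E(K'_\infty) = K'_\infty$ reduces c.t.~of the quotient to that of $\Lambda_E(K'_\infty)$, which is in turn handled via its induced structure from decomposition groups at infinite places.

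Granted c.t.~of $E(K')$ and $V(K')$, the norm map gives canonical isomorphisms $E(K')_G \xrightarrow{\sim} E(K')^G = E(K)$ and $V(K')_G \xrightarrow{\sim} V(K')^G = V(K)$, under which $\phi'_G$ becomes $\phi$. Applying $(-)_G$ to the right-exact sequence $E(K') \xrightarrow{\phi'} V(K') \to H_{\ff'}(E/\cO_{K'}) \to 0$ then yields the exact sequence $E(K) \xrightarrow{\phi} V(K) \to (H_{\ff'}(E/\cO_{K'}))_G \to 0$, whose cokernel is $H_\ff(E/\cO_K)$; this gives the desired isomorphism via the map of Definition~\ref{defn:fld_ext}(2). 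The principal obstacle is establishing c.t.~of the quotients that build $V(K')$: at ramified finite places the integer ring $\cO_{K'_w}$ is typically \emph{not} free over $\cO_{K_v}[G_w]$, and at infinite places $\Lambda_E(K'_\infty)$ is a nontrivial $A$-module whose $G$-cohomology must be controlled. The assumption $\ff = \Tr_{K'/K}(\ff')$ is arranged precisely so that the local traces surject onto the prescribed ideals, which is what ultimately rescues c.t.~of $V(K')$ and thereby the whole descent.
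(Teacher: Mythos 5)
Your overall strategy---apply $G$-coinvariants to the defining four-term sequence and transport it to $K$ via the trace maps, with cohomological triviality of $K'$ as the main input---is the same as the paper's, and your first and last paragraphs are essentially correct. The gap is in the middle step: you assert that the whole target module $V(K') = \frac{E(K'_{\infty})}{\exp_E(K'_{\infty})} \oplus \bigoplus_v \frac{E(K'_v)}{E(\ff' \cO_{K', v})}$ is $G$-cohomologically trivial, and your deduction needs this in order to identify $V(K')_G$ with $V(K')^G = V(K)$ by the norm. This claim is false in general, and the arguments you sketch for it do not work. At an infinite place, dimension shifting along $0 \to \Lambda_E(K'_v) \to \Lie_E(K'_v) \to \exp_E(K'_v) \to 0$ and $0 \to \exp_E(K'_v) \to E(K'_v) \to E(K'_v)/\exp_E(K'_v) \to 0$, using c.t.\ of the two middle (ambient) modules, gives $\hat{H}^i(G, E(K'_v)/\exp_E(K'_v)) \simeq \hat{H}^{i+2}(G, \Lambda_E(K'_v))$, and the lattice is typically \emph{not} c.t.: for the Carlitz module with $\pi_C \in K_v$ and $v$ not totally split, the decomposition group $G_w$ acts trivially on $\Lambda_C(K'_w) \simeq A$, and $H^1(G_w, A) = \Hom(G_w, A) \neq 0$ because $A$ has characteristic $p$ and $G_w$ is a $p$-group; the "induced structure" only reduces you to this nonvanishing $G_w$-cohomology, it does not kill it. Likewise at a wildly ramified finite place $w$, the submodule $E(\ff' \cO_{K', w})$ is not c.t.\ (Proposition \ref{prop:Tr_surj} says precisely that $\hat{H}^0(G_w, \cO_{K', w}) \neq 0$ there), so the quotient $E(K'_w)/E(\ff' \cO_{K', w})$ is not c.t.\ either: c.t.\ of a quotient does not follow from c.t.\ of the ambient module together with $G_w$-stability and trace surjectivity of the submodule.

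The repair is that you never need c.t.\ of $V(K')$; you only need the trace to induce an isomorphism $V(K')_G \to V(K)$ on \emph{coinvariants}, and this follows from a diagram chase on right-exact rows, which is how the paper argues. Apply $(-)_G$ to the right-exact sequences $\Lie_E(K'_v) \to E(K'_v) \to E(K'_v)/\exp_E(K'_v) \to 0$ (infinite $v$) and $E(\ff' \cO_{K', v}) \to E(K'_v) \to E(K'_v)/E(\ff' \cO_{K', v}) \to 0$ (finite $v$), and map them by traces to the corresponding sequences over $K$. The maps $\Lie_E(K'_v)_G \to \Lie_E(K_v)$ and $E(K'_v)_G \to E(K_v)$ are isomorphisms by c.t.\ of $K'_v$ (which you do have), and $E(\ff' \cO_{K', v})_G \to E(\ff \cO_{K, v})$ is surjective by the definition $\ff = \Tr_{K'/K}(\ff')$; a five-lemma-type chase then shows the induced map on the coinvariants of the cokernels is an isomorphism, with no cohomological-triviality claim about those quotients. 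With that substitution, your concluding paragraph goes through verbatim.
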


\begin{proof}
By the existence of normal bases, the module $K'$ is $G$-cohomologically trivial ($G$-c.t.).
Therefore, for any place $v$ of $K$, the localization $K'_v = K_v \otimes_K K'$ is also $G$-c.t.
This implies that the maps labeled (1), (2), (5), and (7) below are all isomorphic.

For each infinite place $v$ of $K$, we have a commutative diagram
\[
\xymatrix{
	 (K'_{v})_G \ar[r]^-{\exp_E} \ar[d]_{(1)}
	& E(K'_{v})_G \ar[r] \ar[d]_{(2)}
	& \left(\cfrac{E(K'_{v})}{\exp_E(K'_{v})} \right)_G \ar[r] \ar[d]_{(3)}
	& 0\\
	K_{v} \ar[r]_-{\exp_E}
	& E(K_{v}) \ar[r]
	& \cfrac{E(K_{v})}{\exp_E(K_{v})} \ar[r]
	& 0
}
\]
induced by the trace maps.
Therefore, the map (3) is isomorphic.

For each finite place $v$ of $K$, we have a commutative diagram
\[
\xymatrix{
	 E(\ff' \cO_{K', v})_G \ar[r] \ar[d]_{(4)}
	& E(K'_v)_G \ar[r] \ar[d]_{(5)}
	& \left(\cfrac{E(K'_v)}{E(\ff' \cO_{K', v})} \right)_G \ar[r] \ar[d]_{(6)}
	& 0\\
	E(\ff \cO_{K, v}) \ar[r]
	& E(K_v) \ar[r]
	& \cfrac{E(K_v)}{E(\ff \cO_{K, v})} \ar[r]
	& 0
}
\]
induced by the trace maps.
The map (4) is surjective by the definition of $\ff$.
Therefore, the map (6) is isomorphic.

We now consider the following diagram
\[
\xymatrix{
	E(K')_G \ar[r] \ar[d]_{(7)}
	& \left( \bigoplus_{v \mid \infty} \cfrac{E(K'_{v})}{\exp_E(K'_{v})} \oplus \bigoplus_{v \nmid \infty} \cfrac{E(K'_v)}{E(\ff' \cO_{K', v})} \right)_G \ar[r] \ar[d]_{(8)}
	& H_{\ff'}(E/\cO_{K'})_G \ar[r] \ar[d]_{(9)}
	& 0 \\
	E(K)  \ar[r]
	& \bigoplus_{v \mid \infty} \cfrac{E(K_{v})}{\exp_E(K_{v})} \oplus \bigoplus_{v \nmid \infty} \cfrac{E(K_v)}{E(\ff \cO_{K, v})} \ar[r]
	& H_{\ff}(E/\cO_K) \ar[r]
	& 0
}
\]
induced by the trace maps.
The map (8) is the combination of (3) and (6), so it is isomorphic.
As a consequence, (9) is isomorphic, as claimed.
\end{proof}

The following well-known proposition (Noether's theorem) is useful to handle $\Tr_{K'/K}(\ff')$.

\begin{prop}\label{prop:Tr_surj}
For each finite place $v$ of $K$, 
the trace map $\Tr_{K'/K}: K'_v \to K_v$ satisfies
\[
\Tr_{K'/K}(\cO_{K'_v}) = \cO_{K_v}
\]
if and only if $K'/K$ is tamely ramified at $v$.
\end{prop}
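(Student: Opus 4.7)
The plan is to reduce to a local statement about completions, further reduce using the maximal unramified subextension, and then directly compute the trace in a totally ramified basis arising from an Eisenstein polynomial.

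First, I would decompose $K'\otimes_K K_v=\prod_{w\mid v} K'_w$ so that $\cO_{K'_v}=\prod_{w\mid v} \cO_{K'_w}$ and $\Tr_{K'/K}$ restricted to $\cO_{K'_v}$ becomes the sum of the local traces $\Tr_{K'_w/K_v}\colon \cO_{K'_w}\to \cO_{K_v}$. Since $K$ is a global function field, all residue fields at finite places are finite, hence perfect, so every residue extension considered is automatically separable and tameness reduces to $\gcd(e(K'_w/K_v),p)=1$. In the Galois situations in which this proposition is actually applied in the paper, the extensions $K'_w/K_v$ are pairwise isomorphic, so it suffices to prove the following purely local statement: for a finite separable extension $L/F$ of complete discrete valuation fields with finite residue fields of characteristic $p$, $\Tr_{L/F}(\cO_L)=\cO_F$ if and only if $\gcd(e(L/F),p)=1$.

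Second, I would split $F\subset L_0\subset L$ through the maximal unramified subextension $L_0$, so that $L_0/F$ is unramified and $L/L_0$ is totally ramified of degree $e$. For the unramified step, reducing modulo the maximal ideals turns $\Tr_{L_0/F}$ into the trace of a separable extension of finite residue fields, which is surjective; lifting any element whose residue has trace $1$ produces a unit inside $\Tr_{L_0/F}(\cO_{L_0})$, so $\Tr_{L_0/F}(\cO_{L_0})=\cO_F$ unconditionally. The equivalence is thereby reduced to checking whether $\Tr_{L/L_0}(\cO_L)=\cO_{L_0}$. Now choose a uniformizer $\pi_L$ of $L$ with Eisenstein minimal polynomial $f(x)=x^e+c_{e-1}x^{e-1}+\cdots+c_0$ over $\cO_{L_0}$ (so $c_i\in\fm_{L_0}$ for every $i<e$); then $\{1,\pi_L,\ldots,\pi_L^{e-1}\}$ is an $\cO_{L_0}$-basis of $\cO_L$. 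Compute $\Tr_{L/L_0}(1)=e$, and, by Newton's identities applied to the roots of $f$, verify that the power sums $\Tr_{L/L_0}(\pi_L^i)$ lie in $\fm_{L_0}$ for $1\le i\le e-1$. Hence $\Tr_{L/L_0}(\cO_L)=(e)+\fm_{L_0}$, and this equals $\cO_{L_0}$ exactly when $e$ is a unit in $\cO_{L_0}$, i.e., $\gcd(e,p)=1$, which is the tameness of $L/L_0$ (and hence of $L/F$).

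The main obstacle is the inductive estimate $\Tr_{L/L_0}(\pi_L^i)\in \fm_{L_0}$ for $1\le i\le e-1$. Using the Newton recursion $p_k=-c_{e-1}p_{k-1}-c_{e-2}p_{k-2}-\cdots-c_{e-k+1}p_1-kc_{e-k}$, every $c_j$ factor is in $\fm_{L_0}$ by the Eisenstein condition, and the standalone term $-kc_{e-k}$ likewise lies in $\fm_{L_0}$ for the same reason. Induction then yields $p_k\in\fm_{L_0}$ throughout the range $1\le k\le e-1$, after which the rest of the proof is purely formal.
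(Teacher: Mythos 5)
Your proof is correct, and it takes a genuinely different route from the paper, which does not actually prove the proposition but only cites \cite[Chap.~III, Propositions 7 and 13]{Ser79}. Serre's argument goes through the different: $\Tr_{L/F}(\cO_L)=\fm_F^{\lfloor d/e\rfloor}$ with $d=v_L(\cD_{L/F})$, and $d=e-1$ exactly in the tame case, so the image is all of $\cO_F$ iff the extension is tame. You replace this with a self-contained elementary computation: split off the maximal unramified subextension (where surjectivity follows from surjectivity of the residue-field trace), then evaluate $\Tr_{L/L_0}(\pi_L^k)$ for an Eisenstein uniformizer via Newton's identities. Your route is more elementary and avoids the different altogether; the cost is that the quantitative form of Serre's statement (the exact valuation of $\Tr_{L/F}(\cO_L)$, not merely whether it is everything) is what the paper later exploits in the proof of Proposition \ref{prop:univ_Tr1}, so the citation is doing extra work there that your argument does not reproduce. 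Two minor points: your caveat that for non-Galois $K'/K$ the global statement literally gives ``$\sum_{w\mid v}\Tr_{K'_w/K_v}(\cO_{K'_w})=\cO_{K_v}$ iff \emph{some} $w\mid v$ is tame'' is a legitimate observation (the stated equivalence with tameness at $v$ is clean precisely in the Galois situations where the proposition is applied); and the displayed equality $\Tr_{L/L_0}(\cO_L)=(e)+\fm_{L_0}$ is a slight overstatement --- the image is the ideal $(e,p_1,\dots,p_{e-1})$, which contains $(e)$ and is contained in $(e)+\fm_{L_0}$ --- but only these two containments are used, so the conclusion stands.
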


\begin{proof}
See, e.g., \cite[Chap.~III, Propositions 7 and  13]{Ser79}.
\end{proof}

We proceed to the setting of \S \ref{ss:clgp_lim}.

\begin{defn}\label{defn:fld_ext2}
Let $S$ and $S'$ be finite sets of finite places of $K$ and $K'$, respectively.
We write $S_{K'}$ for the set of places of $K'$ lying above places in $S$.

\begin{itemize}
\item[(1)]
When $S_{K'} \subset S'$, we define natural maps
\[
\hat{U}_S(E/\cO_K) \hookrightarrow \hat{U}_{S'}(E/\cO_{K'}),
\quad
H_S(E/\cO_K) \to H_{S'}(E/\cO_{K'})
\]
as the projective limits of the maps in Definition \ref{defn:fld_ext}(1). 

\item[(2)]
Suppose $K'/K$ is separable.
When $S_{K'} \supset S'$, we define natural maps
\[
\hat{U}_{S'}(E/\cO_{K'}) \to \hat{U}_{S}(E/\cO_K),
\quad
H_{S'}(E/\cO_{K'}) \twoheadrightarrow H_{S}(E/\cO_K)
\]
as the projective limits of the maps in Definition \ref{defn:fld_ext}(2).
\end{itemize}
\end{defn}

To ease the notation, for a finite set $S$ of finite places of $K$, we set
\[
\hat{U}_{S}(E/\cO_{K'}) = \hat{U}_{S_{K'}}(E/\cO_{K'}),
\quad
H_{S}(E/\cO_{K'}) = H_{S_{K'}}(E/\cO_{K'}).
\]
The following Galois descent property plays the key role in the proof of the main theorems.

\begin{prop}\label{prop:descent2}
Let $K'/K$ be a finite Galois extension and $G$ its Galois group.
Let $S$ be a finite set of finite places of $K$.
Suppose that $K'/K$ is tamely ramified at any finite place $v \not \in S$.
Then we have an isomorphism
\[
H_S(E/\cO_{K'})_G \simeq H_S(E/\cO_K)
\]
as $\hat{A}$-modules, induced by the map in Definition \ref{defn:fld_ext2}(2).
\end{prop}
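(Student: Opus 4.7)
The plan is to deduce the statement from the finite-level descent result, Proposition \ref{prop:descent1}, by passing to the inverse limit along a cofinal family of $G$-stable moduli. First I would observe that the $G$-stable moduli of $K'$ supported on $S_{K'}$ are cofinal among all such moduli: given any $\ff'$ supported on $S_{K'}$, the modulus $\prod_{g \in G} g(\ff')$ is $G$-stable, supported on $S_{K'}$, and divisible by $\ff'$. Hence the inverse limit defining $H_S(E/\cO_{K'})$ may be computed along $G$-stable $\ff'$, and for each such $\ff'$ Proposition \ref{prop:descent1} supplies an $A$-module isomorphism
\[
H_{\ff'}(E/\cO_{K'})_G \simeq H_{\Tr_{K'/K}(\ff')}(E/\cO_K)
\]
induced by the trace.

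Second, the tameness hypothesis is exactly what controls the support of $\Tr_{K'/K}(\ff')$: at any finite place $v \notin S$, every $w \mid v$ satisfies $\ord_w(\ff') = 0$, so $\ff' \cO_{K', w} = \cO_{K', w}$, and Proposition \ref{prop:Tr_surj} yields $\Tr_{K'_w/K_v}(\cO_{K', w}) = \cO_{K, v}$. Thus $\Tr_{K'/K}(\ff')$ is a modulus of $K$ supported on $S$. I would then verify that, as $\ff'$ ranges over $G$-stable moduli supported on $S_{K'}$, the images $\Tr_{K'/K}(\ff')$ form a cofinal family among moduli of $K$ supported on $S$: taking $\ff' = \bigl(\prod_{w \in S_{K'}} w\bigr)^n$ and invoking a standard local estimate for the trace in terms of the different shows $\ord_v(\Tr_{K'/K}(\ff')) \to \infty$ with $n$ for every $v \in S$.

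The main obstacle will be the non-exactness of $G$-coinvariants: I need to commute $(-)_G$ with the inverse limit. Since each $H_{\ff'}(E/\cO_{K'})$ is finite by Proposition \ref{prop:UH_rank2} and the transition maps are surjective by Proposition \ref{prop:compar1}, the Mittag-Leffler condition holds for the systems $\{H_{\ff'}(E/\cO_{K'})\}$ and $\{I H_{\ff'}(E/\cO_{K'})\}$, where $I$ is the augmentation ideal of $\hat{A}[G]$. Passing the short exact sequence
\[
0 \to I H_{\ff'}(E/\cO_{K'}) \to H_{\ff'}(E/\cO_{K'}) \to H_{\ff'}(E/\cO_{K'})_G \to 0
\]
to the limit, together with the identity $\varprojlim (I M_i) = I\, \varprojlim M_i$ valid for finite $G$ and finite $M_i$ with surjective transitions, yields $\varprojlim_{\ff'} H_{\ff'}(E/\cO_{K'})_G \simeq H_S(E/\cO_{K'})_G$. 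Combining this with the cofinality of $\Tr_{K'/K}(\ff')$ established above produces the required isomorphism, and the naturality of the trace construction identifies it with the map of Definition \ref{defn:fld_ext2}(2).
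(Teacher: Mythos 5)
Your proof is correct and follows the same strategy as the paper: reduce to Proposition \ref{prop:descent1} by restricting to a cofinal family of $G$-stable moduli, use the tameness hypothesis with Proposition \ref{prop:Tr_surj} to control supports, and pass to the projective limit. The one place the paper is simpler is the cofinality of the trace family: instead of the local different estimate for $\ff' = \bigl(\prod_{w \in S_{K'}} w\bigr)^n$, the paper observes that for any modulus $\ff$ of $K$ supported on $S$ one may take $\ff' = \ff\cO_{K'}$, which is automatically $G$-stable, supported on $S_{K'}$, and satisfies $\Tr_{K'/K}(\ff') \subseteq \ff$, so cofinality is immediate.
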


\begin{proof}
By Proposition \ref{prop:Tr_surj} and the assumption, for any modulus $\ff'$ of $K'$ whose support is in $S_{K'}$, its trace $\Tr_{K'/K}(\ff')$ is a modulus of $K$ whose support is in $S$.
Moreover, for any modulus $\ff$ of $K$, there is a modulus $\ff'$ of $K'$ that is $G$-stable and $\ff \supset \Tr_{K'/K}(\ff')$; for instance, $\ff' = \ff \cO_{K'}$ works.
Therefore, the proposition follows by taking the projective limit of the isomorphism in Proposition \ref{prop:descent1}.
\end{proof}

\section{Iwasawa-type asymptotic formula}\label{sec:app}

Let $\cK/K$ be a $\Z_p$-extension of global $Q$-fields and $K_n$ its $n$-th layer.
Set $\Gamma = \Gal(\cK/K)$.
Let $S_{\ram} = S_{\ram}(\cK/K)$ denote the set of finite places $v$ that are ramified (necessarily wildly) in $\cK/K$.
We assume that $S_{\ram}(\cK/K)$ is finite.
Let $E$ be a Drinfeld $A$-module over $\cO_K$.

We first mention that there are many $\Z_p$-extensions $\cK/K$ that fit into this setting (see also Higgins \cite[Example 4.1.1]{Hig21}):

\begin{eg}\label{eg:Z_p-ext}
As in Example \ref{eg:Carlitz}, let $Q=\F_q(t)$, $\infty=1/t$, and $A=\F_q[t]$.
Assume that $K=\bF_q(\theta)$ and $\gamma\colon A\to K$ is determined by $\gamma(t)=\theta$.
There are various kinds of $\Z_p$-extensions of $K$ with finite $S_{\ram}$ as follows.

By the Kronecker--Weber-type theorem \cite[Theorem 7.1]{Hay74}, the maximal abelian extension of $K=\bF_q(\theta)$ is the composite of three linearly disjoint  fields $\ol{\F_q}(\theta)$, $K_C$, and $K_{\wtil{C}}$.
Here, $\ol{\F_q}$ is the algebraic closure of  $\F_q$ in $K^{\sep}$ and $K_C=K(C(K^{\sep})_{\tors})$, where $C(K^{\sep})_{\tors}$ is the torsion $A$-submodule of $C(K^{\sep})$.
The remained field $K_{\wtil{C}}$ is given by adjoining 
$\wtil{C}(K^{\sep})_{\tors}$ for the Drinfeld $A$-module $\wtil{C}$ determined by $\phi_{\wtil{C}}(t) = 1/\theta+\tau$.
We get $\Z_p$-extensions along these three directions:

\begin{itemize}
\item  The family $\{K_n\}_{n\geq 0}$ with $K_n=\F_{q^{p^n}}(\theta)$ gives 
a $\Z_p$-extension of $K$ unramified at all places, so that $S_{\ram}=\emptyset$.

\item For each prime $\fp$ of $A$, the $\fp$-primary part 
$C[\fp^\infty]$ of $C(K^{\sep})_{\tors}$ provides the Carlitz $\fp$-cyclotomic extension $K(C[\fp^\infty])/K$.
It is an abelian extension with Galois group $A_\fp^\times \cong (A/\fp)^\times\times\Z_p^{\aleph_0}$, in which 
all finite places except $\fp$ are unramified and $\infty$ is tamely ramified
(\cite[Proposition 12.7, Theorem 12.14]{Ros02}).
Thus, $K(C[\fp^\infty])$ contains infinitely many $\Z_p$-extensions of $K$ unramified at $\infty$ with  $S_{\ram}=\{\fp\}$.

\item The field $K_{\wtil{C}}$ contains all $\Z_p$-extensions of $K$  unramified at all finite places and totally ramified at $\infty$.
For example, for each non-constant polynomial $f(\theta)\in \F_q[\theta]$, 
there is a tower of affine smooth curves $C_n$ over $\F_q$
\[
\cdots \to C_n\to C_{n-1}\to \cdots \to C_0:=\bA^1_{\bF_q}
\]
so-called the \textit{Artin--Schreier--Witt cover} associated to $f(\theta)$ (cf.\ \cite[\S 1]{DWX16} and \cite[Example 4.10]{KW18}), 
which forms a tower of Galois covers of $\bA_{\bF_q}^1$ totally ramified at $\infty$ with total Galois group $\bZ_p$. 
Thus, writing $K_n$ for the function field of $C_n$, we get a $\Z_p$-extension of $K$ in $K_{\wtil{C}}$ with $S_{\ram}=\emptyset$. 
\end{itemize}

In general, $\Z_p$-extensions of global function fields can be described by class field theory (cf.\ \cite[\S 1]{GK88}) 
and Artin--Schreier--Witt theory (see \cite[\S 2]{KW18} for instance).
\end{eg}

\subsection{Iwasawa modules}\label{ss:Iw_mod}

We naturally define the Iwasawa modules.

\begin{defn}\label{defn:Iw_mod}
For a finite set $S$ of finite places of $K$, we define
\[
\hat{U}_S(E/\cO_{\cK}) = \varprojlim_n \hat{U}_S(E/\cO_{K_n}),
\quad
H_S(E/\cO_{\cK}) = \varprojlim_n H_S(E/\cO_{K_n})
\]
by using Definition \ref{defn:fld_ext2}(2).
For a prime $\fp$ of $A$, we set $H_S(E/\cO_{K_n})_{\fp} = A_{\fp} \otimes_{\hat{A}} H_S(E/\cO_{K_n})$ and define
\[
H_S(E/\cO_{\cK})_{\fp} = \varprojlim_n H_S(E/\cO_{K_n})_{\fp}
\]
similarly.
Then $H_S(E/\cO_{\cK})$ is a compact module over the completed group ring $\hat{A}[[\Gamma]] = \varprojlim_n \hat{A}[\Gamma/\Gamma^{p^n}]$, while $H_S(E/\cO_{\cK})_{\fp}$ is a compact module over $A_{\fp}[[\Gamma]] = \varprojlim_n A_{\fp}[\Gamma/\Gamma^{p^n}]$.
We write $H(E/\cO_{\cK}) = H_{\emptyset}(E/\cO_{\cK})$ and $H(E/\cO_{\cK})_{\fp} = H_{\emptyset}(E/\cO_{\cK})_{\fp}$.
\end{defn}

This $H(E/\cO_{\cK})_{\fp}$ is the same as the module introduced in \cite[\S 4.1]{Hig21}.
In fact, Propositions \ref{prop:descent4} and \ref{prop:Iw_fg} below, specialized to the case where $S = S_{\ram} = \emptyset$, are already obtained in \cite[\S 4.2]{Hig21}.

\begin{prop}\label{prop:compar3}
For finite sets $S, T$ of finite places of $K$ with $T \subset S$, we have an exact sequence
\[
0 \to \hat{U}_{S}(E/\cO_{\cK}) 
\to \hat{U}_{T}(E/\cO_{\cK})
\to \bigoplus_{v \in S \setminus T} \varprojlim_n E(\cO_{K_n, v})
\to H_{S}(E/\cO_{\cK})
\to H_{T}(E/\cO_{\cK})
\to 0
\]
of $\hat{A}[[\Gamma]]$-modules.
\end{prop}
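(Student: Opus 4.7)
The plan is to take the projective limit of the five-term exact sequence from Proposition \ref{prop:compar2} applied at each finite layer $K_n$. Because $K_n/K$ is finite and separable, Definition \ref{defn:fld_ext2}(2) equips the tower with compatible trace-induced transition maps $\hat{U}_S(E/\cO_{K_{n+1}}) \to \hat{U}_S(E/\cO_{K_n})$, $E(\cO_{K_{n+1}, v}) \to E(\cO_{K_n, v})$, and $H_S(E/\cO_{K_{n+1}}) \to H_S(E/\cO_{K_n})$. An inspection of the diagrams in Definition \ref{defn:fld_ext}, together with the naturality of the connecting map in Proposition \ref{prop:compar1}, shows that the sequence of Proposition \ref{prop:compar2} is natural in $n$, giving an inverse system of exact sequences of $\hat{A}$-modules that is equivariant for the $\Gamma/\Gamma^{p^n}$-actions.

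The main point is to verify that this inverse limit is itself exact. For this, observe that every module appearing in the system is a compact topological $\hat{A}$-module: Proposition \ref{prop:UH_rank3} gives this for $\hat{U}_S(E/\cO_{K_n})$ and $H_S(E/\cO_{K_n})$, while Proposition \ref{prop:loc_str}, combined with the decomposition $E(\cO_{K_n, v}) = \bigoplus_{w \mid v} E(\cO_{K_n, w})$, gives it for the middle term, each summand being finitely generated over $A_{\fp}$ for $\fp$ the prime of $A$ below $v$. For an inverse system of compact Hausdorff topological modules with continuous transition maps, the Mittag--Leffler condition is automatic (descending chains of nonempty compact images have nonempty intersection), so the derived functor $\varprojlim^1$ vanishes and the inverse limit preserves exactness of the five-term sequence.

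It then remains to identify the resulting limit terms with those in the statement. The outer terms match Definition \ref{defn:Iw_mod} directly. For the middle term, since $S \setminus T$ is a finite set, the projective limit commutes with the direct sum, producing $\bigoplus_{v \in S \setminus T} \varprojlim_n E(\cO_{K_n, v})$. The $\hat{A}[[\Gamma]]$-module structure on the limits is inherited from the compatible $\Gamma/\Gamma^{p^n}$-actions at each finite level and the equality $\hat{A}[[\Gamma]] = \varprojlim_n \hat{A}[\Gamma/\Gamma^{p^n}]$.

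The only real obstacle is the exactness of $\varprojlim$, which is handled cleanly by the compactness argument above; everything else is a mechanical passage to the inverse limit from the finite-level statement.
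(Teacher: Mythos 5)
Your proposal is correct and follows exactly the paper's route: the paper's proof is the single sentence ``This is the projective limit of Proposition \ref{prop:compar2},'' and the compactness/Mittag--Leffler argument you spell out is precisely the justification the paper leaves implicit (and which it already used in the proof of Proposition \ref{prop:compar2} itself).
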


\begin{proof}
This is the projective limit of Proposition \ref{prop:compar2}.
\end{proof}

This proposition shows that the effect of $S$ can be evaluated by $\varprojlim_n E(\cO_{K_n, v})$.
It seems to be notable that $\varprojlim_n E(\cO_{K_n, v})$ vanishes if $v$ is ramified in $\cK/K$, so the ramified places do not affect $H_{S}(E/\cO_{\cK})$ (see \S \ref{ss:ram_pl}).
This observation is unnecessary to prove the main theorem, Theorem \ref{thm:ICNF1}.

\begin{prop}\label{prop:descent4}
Let $S$ be a finite set of finite places of $K$ such that $S \supset S_{\ram}(\cK/K)$.
Then we have an isomorphism
\[
H_S(E/\cO_{\cK})_{\Gamma^{p^n}} \simeq H_S(E/\cO_{K_n})
\]
for any $n \geq 0$.
\end{prop}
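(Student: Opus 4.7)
The plan is to deduce the isomorphism by applying the finite-level Galois descent (Proposition \ref{prop:descent2}) to each subextension $K_m/K_n$ with $m \geq n$, and then passing to the inverse limit over $m$. Throughout, set $M_m := H_S(E/\cO_{K_m})$ and $M := H_S(E/\cO_{\cK}) = \varprojlim_m M_m$, and let $\gamma_n$ be a topological generator of $\Gamma^{p^n} \simeq \Z_p$, so that $M_{\Gamma^{p^n}} = M/(\gamma_n - 1)M$.

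First I would verify the hypothesis of Proposition \ref{prop:descent2} for each $K_m/K_n$. The Galois group $\Gamma^{p^n}/\Gamma^{p^m}$ is cyclic, and every finite place of $K_n$ lying above a place of $K$ outside $S$ is unramified in $\cK/K$ since $S \supset S_{\ram}(\cK/K)$; such a place is therefore unramified, in particular tamely ramified, in $K_m/K_n$. Proposition \ref{prop:descent2} then provides a short exact sequence of $\hat{A}$-modules
\[
0 \to (\gamma_n - 1) M_m \to M_m \to H_S(E/\cO_{K_n}) \to 0
\]
for every $m \geq n$, where I use that the augmentation ideal of the cyclic group algebra $\hat{A}[\Gamma^{p^n}/\Gamma^{p^m}]$ is principal, generated by (the image of) $\gamma_n - 1$. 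Since the transition maps $M_{m+1} \twoheadrightarrow M_m$ from Definition \ref{defn:fld_ext2}(2) are surjective (again by Proposition \ref{prop:descent2}) and commute with $\gamma_n - 1$, the subsystem $\bigl((\gamma_n - 1) M_m\bigr)_m$ also has surjective transitions. By the Mittag-Leffler criterion, $\varprojlim_m$ preserves exactness, yielding
\[
0 \to \varprojlim_m (\gamma_n - 1) M_m \to M \to H_S(E/\cO_{K_n}) \to 0.
\]

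The main obstacle is the final step: identifying $\varprojlim_m (\gamma_n - 1) M_m$ with $(\gamma_n - 1) M$. The inclusion $\supseteq$ is immediate by functoriality. For the reverse inclusion, given a compatible system $(z_m)_m \in \varprojlim_m (\gamma_n - 1) M_m$, the preimages $Y_m := \{y \in M_m : (\gamma_n - 1)y = z_m\}$ are non-empty closed subsets of the profinite modules $M_m$, and the transition maps restrict to continuous maps $Y_{m+1} \to Y_m$. By the standard fact that an inverse limit of non-empty compact Hausdorff spaces along continuous maps is non-empty (a consequence of Tychonoff's theorem), there exists $y \in \varprojlim_m Y_m \subset M$ satisfying $(\gamma_n - 1) y = (z_m)_m$. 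This gives $\varprojlim_m (\gamma_n - 1) M_m = (\gamma_n - 1) M$, and combining with the preceding exact sequence yields $M/(\gamma_n - 1)M \simeq H_S(E/\cO_{K_n})$, as desired.
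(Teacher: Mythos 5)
Your argument is correct and is essentially the paper's own proof, which simply states that the result is ``the projective limit of Proposition \ref{prop:descent2}''; you have additionally supplied the standard compactness/Mittag--Leffler details showing that passing to the inverse limit over $m$ commutes with taking $\Gamma^{p^n}$-coinvariants, which the paper leaves implicit.
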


\begin{proof}
This is the projective limit of Proposition \ref{prop:descent2}.
\end{proof}

\begin{prop}\label{prop:Iw_fg}
For a finite set $S$ of finite places of $K$, the module $H_S(E/\cO_{\cK})$ is finitely generated over $\hat{A}[[\Gamma]]$.
\end{prop}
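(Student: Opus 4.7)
The plan is to reduce via Proposition \ref{prop:compar3} to the case $S \supset S_{\ram}(\cK/K)$ where Galois descent is available, and then invoke a topological Nakayama argument prime-by-prime.

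First, I would set $S' := S \cup S_{\ram}(\cK/K)$. Proposition \ref{prop:compar3} applied with $T = S$ yields a surjection $H_{S'}(E/\cO_{\cK}) \twoheadrightarrow H_{S}(E/\cO_{\cK})$, so it suffices to treat the case $S \supset S_{\ram}(\cK/K)$, which I assume henceforth. Next, I would pass to the prime-by-prime picture. Because each $H_S(E/\cO_{K_n})$ is finitely generated over $\hat{A} = \prod_{\fp} A_{\fp}$ (Proposition \ref{prop:UH_rank3}), it decomposes as $\prod_{\fp} H_S(E/\cO_{K_n})_{\fp}$, and this decomposition is preserved by the inverse limit since products commute with projective limits. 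Hence
\[
H_S(E/\cO_{\cK}) = \prod_{\fp} H_S(E/\cO_{\cK})_{\fp}, \qquad \hat{A}[[\Gamma]] = \prod_{\fp} A_{\fp}[[\Gamma]].
\]
Applying $A_{\fp} \otimes_{\hat{A}} (-)$ to the Galois-descent isomorphism of Proposition \ref{prop:descent4} at the bottom layer $n=0$ produces $(H_S(E/\cO_{\cK})_{\fp})_{\Gamma} \simeq H_S(E/\cO_K)_{\fp}$, which is finitely generated over $A_{\fp}$; moreover, fixing in advance an integer $r$ and $r$ generators of $H_S(E/\cO_K)$ over $\hat{A}$ (Proposition \ref{prop:UH_rank3}) gives a bound on the number of $A_{\fp}$-generators of $H_S(E/\cO_K)_{\fp}$ that is uniform in $\fp$.

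At each prime $\fp$, the ring $A_{\fp}[[\Gamma]]$ is a complete Noetherian local ring whose augmentation ideal $I_{\Gamma}$ lies in the maximal ideal, and $H_S(E/\cO_{\cK})_{\fp}$ is a compact $A_{\fp}[[\Gamma]]$-module (being a projective limit of finitely generated $A_{\fp}$-modules). Topological Nakayama's lemma therefore lifts any $r$ generators of the $\Gamma$-coinvariants to $r$ generators of $H_S(E/\cO_{\cK})_{\fp}$ over $A_{\fp}[[\Gamma]]$: concretely, the closed $A_{\fp}[[\Gamma]]$-submodule $N_{\fp}$ they generate satisfies $H_S(E/\cO_{\cK})_{\fp} = N_{\fp} + I_{\Gamma} H_S(E/\cO_{\cK})_{\fp}$, so the compact quotient $X = H_S(E/\cO_{\cK})_{\fp}/N_{\fp}$ satisfies $X = I_{\Gamma} X$, forcing $X = 0$. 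Reassembling over all $\fp$ exhibits $r$ elements of $H_S(E/\cO_{\cK})$ that generate it over $\hat{A}[[\Gamma]]$.

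The chief subtlety is that $\hat{A}[[\Gamma]]$ is not local, so Nakayama's lemma does not directly apply; I circumvent this by working inside the product decomposition $\hat{A} = \prod_{\fp} A_{\fp}$ and, crucially, by arranging the number of generators to be independent of $\fp$. That uniformity is the only real constraint and is guaranteed by fixing generators of $H_S(E/\cO_K)$ over $\hat{A}$ before descending to each complete local factor.
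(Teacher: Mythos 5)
Your proof is correct and follows essentially the same route as the paper: reduce to $S \supset S_{\ram}(\cK/K)$ via the surjection from Proposition \ref{prop:compar3}, apply the Galois descent of Proposition \ref{prop:descent4} together with Proposition \ref{prop:UH_rank3}, and conclude by topological Nakayama. The paper simply cites "the topological version of Nakayama's lemma" over $\hat{A}[[\Gamma]]$; your explicit decomposition into the local factors $A_{\fp}[[\Gamma]]$ with a generator count uniform in $\fp$ is exactly the detail needed to justify that step, since $\hat{A}[[\Gamma]]$ is not local.
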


\begin{proof}
By the surjective homomorphism $H_{S \cup S_{\ram}}(E/\cO_{\cK}) \twoheadrightarrow H_{S}(E/\cO_{\cK})$ in Proposition \ref{prop:compar3}, we may assume that $S \supset S_{\ram}(\cK/K)$ by enlarging $S$ if necessary.
Then by Proposition \ref{prop:descent4}, we have $H_S(E/\cO_{\cK})_{\Gamma} \simeq H_S(E/\cO_K)$, which is finitely generated over $\hat{A}$ by Proposition \ref{prop:UH_rank3}.
By applying the topological version of Nakayama's lemma (see \cite[Lemma (5.2.18)]{NSW08} or \cite[Lemma 13.16]{Was97}), the proposition follows.
\end{proof}

\begin{rem}
More generally, for any Galois extension $\cK$ of a global $Q$-field $K$, we can define
\[
H_S(E/\cO_{\cK}) = \varprojlim_{K'} H_S(E/\cO_{K'}),
\]
where $K'$ runs over the finite Galois extensions of $K$ in $\cK$.
Then the same argument shows that this is a finitely generated module over $\hat{A}[[\Gal(\cK/K)]]$ as long as $S_{\ram}(\cK/K)$ is finite and $\Gal(\cK/K)$ contains an open pro-$p$ subgroup.
It seems to be natural to apply this framework to $K(E[\fp^{\infty}])/K$, but then the Galois group is so large (see Example \ref{eg:Z_p-ext}) that the Iwasawa algebra is not even noetherian.
We do not study this topic in this paper.
\end{rem}

\subsection{Iwasawa-type asymptotic formula}\label{ss:ICNF_pf}

We now state and prove the main theorem, from which Theorem \ref{thm:ICNF} follows at once by setting $S = \emptyset$ (for the value of $\mu_{\fp}$, see the paragraph after the proof).
Let $H_S(E/\cO_{K_n})_{\fp, \fin}$ be the maximal finite $A_\fp$-submodule of $H_S(E/\cO_{K_n})_\fp$.

\begin{thm}\label{thm:ICNF1}
Let $\fp$ be a prime of $A$.
Let $S$ be a finite set of finite places of $K$ such that $S \supset S_{\ram}(\cK/K) \cap S_{\fp}$, that is, $S$ contains all ramified $\fp$-adic places.
Then there exist integers $\mu_{\fp} \geq 0$ and $\nu_{\fp}$ such that
\[
\length_{A_{\fp}}(H_S(E/\cO_{K_n})_{\fp, \fin}) 
= \mu_{\fp} p^n + \nu_{\fp}
\]
holds for $n \gg 0$.
Indeed, we have $\mu_{\fp} = \mu^*(H_S(E/\cO_{\cK})_{\fp, \tors})$, using Definition \ref{defn:str_thm}.
\end{thm}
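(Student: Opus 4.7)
The plan is to reduce Theorem \ref{thm:ICNF1} to the algebraic Iwasawa-type asymptotic formula of \S \ref{sec:alg}, exploiting the Galois descent of Proposition \ref{prop:descent4}. The descent, however, requires $S \supset S_\ram(\cK/K)$, while the theorem only assumes $S \supset S_\ram(\cK/K) \cap S_\fp$; so I will first settle the descent-compatible case and then reduce the general one to it.

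For the case $S \supset S_\ram(\cK/K)$, the module $M := H_S(E/\cO_\cK)_\fp$ is finitely generated over $A_\fp[[\Gamma]]$ by Proposition \ref{prop:Iw_fg}, and satisfies $M_{\Gamma^{p^n}} \simeq H_S(E/\cO_{K_n})_\fp$ by Proposition \ref{prop:descent4}. Since each $M_{\Gamma^{p^n}}$ is finitely generated over the complete DVR $A_\fp$, its maximal finite submodule coincides with its $A_\fp$-torsion part. Applying the algebraic formula of \S \ref{sec:alg} directly to $M$ yields the desired asymptotic
$$\length_{A_\fp}(H_S(E/\cO_{K_n})_{\fp, \fin}) = \mu^*(M_\tors) p^n + \nu$$
for $n \gg 0$, which is Theorem \ref{thm:ICNF1} in this case.

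For the general case, set $S' := S \cup S_\ram(\cK/K)$, so the extra places $v \in S' \setminus S$ lie in $S_\ram \setminus S_\fp$ and are therefore ramified and non-$\fp$-adic. For each such $v$, with residue characteristic $\fq \neq \fp$, Proposition \ref{prop:loc_str} realizes $E(\fm_{K_n, v})$ as a finitely generated $A_\fq$-module; the orthogonality of the direct factors in $\hat A = \prod_\fp A_\fp$ then forces $E(\fm_{K_n, v})_\fp = 0$, so $E(\cO_{K_n, v})_\fp \simeq E(\kappa_v(K_n))[\fp^\infty]$. Because $v$ is ramified in the $\Z_p$-extension $\cK/K$, the residue-field degree $[\kappa_v(K_n) : \kappa_v(K)]$ is bounded by $[D_v : I_v] < \infty$ (where $D_v, I_v \subset \Gamma$ are the decomposition and inertia subgroups) and stabilizes for $n \gg 0$, so $\bigoplus_{v \in S' \setminus S} E(\cO_{K_n, v})_\fp$ is an eventually constant finite $A_\fp$-module. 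Proposition \ref{prop:compar2} then gives, after taking $\fp$-parts, a short exact sequence $0 \to F_n \to H_{S'}(E/\cO_{K_n})_\fp \to H_S(E/\cO_{K_n})_\fp \to 0$ whose finite kernel $F_n$ has eventually constant length; since $F_n$ automatically sits in $H_{S'}(E/\cO_{K_n})_{\fp, \fin}$, one has $\length_{A_\fp}(H_S(E/\cO_{K_n})_{\fp, \fin}) = \length_{A_\fp}(H_{S'}(E/\cO_{K_n})_{\fp, \fin}) - \length_{A_\fp}(F_n)$, so the formula proved above for $H_{S'}$ transfers to $H_S$ with the same $\mu$-invariant. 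At the Iwasawa level, the surjection $H_{S'}(E/\cO_\cK)_\fp \twoheadrightarrow H_S(E/\cO_\cK)_\fp$ has finite kernel, hence the $\mu^*$-invariants of the $A_\fp[[\Gamma]]$-torsion parts agree, yielding $\mu_\fp = \mu^*(H_S(E/\cO_\cK)_{\fp, \tors})$.

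The main obstacle will be in the reduction step, specifically in verifying that the kernels $F_n$ stabilize exactly (not merely in bounded length) so as to affect $\nu_\fp$ by an $O(1)$ correction rather than a fluctuating one. This amounts to a Krull--Schmidt-type stabilization argument for inverse systems of finitely generated modules of bounded length over the complete DVR $A_\fp$, combined with a compatibility check for the trace maps of Definition \ref{defn:fld_ext2}(2); the algebraic input that ramified places in a $\Z_p$-extension have bounded residue-field extensions is elementary, but propagating this through the surjection to $H_S$ is the delicate point.
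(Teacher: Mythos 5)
Your proposal follows essentially the same route as the paper: Galois descent plus Theorem \ref{thm:alg}(3) when $S \supset S_{\ram}(\cK/K)$, then comparison with $S \cup S_{\ram}$ via Proposition \ref{prop:compar2}, using that the extra local terms at ramified non-$\fp$-adic places are finite with stationary residue fields. The one "obstacle" you flag does not require any Krull--Schmidt machinery: once $n$ is large enough that the transition maps $\bigoplus_v E(\cO_{K_n,v})_{\fp} \to \bigoplus_v E(\cO_{K_{n+1},v})_{\fp}$ of the \emph{direct} system (Definition \ref{defn:fld_ext2}(1)) are isomorphisms, the commutativity of the comparison diagram shows $F_{n+1}$ is the image of $F_n$ in $H_{S'}(E/\cO_{K_{n+1}})_{\fp}$, so $\length_{A_\fp}(F_n)$ is non-increasing and hence eventually constant, which is exactly the paper's (tersely stated) argument. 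For the $\mu$-invariant, the paper in fact shows the Iwasawa-level kernel vanishes (the inverse limit of the stationary finite local terms under trace maps is zero), giving $H_{S\cup S_{\ram}}(E/\cO_{\cK})_{\fp} \simeq H_S(E/\cO_{\cK})_{\fp}$, though your weaker observation that the kernel is pseudo-null already suffices to equate the $\mu^*$-invariants.
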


\begin{proof}
When $S \supset S_{\ram} = S_{\ram}(\cK/K)$, the claim follows at once from Proposition \ref{prop:descent4} and Theorem \ref{thm:alg}(3) applied to $M = H_S(E/\cO_{\cK})_{\fp}$.
Let us deduce the general case by comparing $H_S(E/\cO_{K_n})$ with $H_{S \cup S_{\ram}}(E/\cO_{K_n})$.

For each $n$, we have an exact sequence
\[
\bigoplus_{v \in S_{\ram} \setminus S} A_{\fp} \otimes_{\hat{A}} E(\cO_{K_n, v})
\to H_{S \cup S_{\ram}}(E/\cO_{K_n})_{\fp}
\to H_{S}(E/\cO_{K_n})_{\fp}
\to 0
\]
by Proposition \ref{prop:compar2}.
By the assumption, for any $v \in S_{\ram} \setminus S$, we have $v \not\in S_{\fp}$.
Then Proposition \ref{prop:loc_str} implies
\[
A_{\fp} \otimes_{\hat{A}} E(\cO_{K_n, v}) \simeq A_{\fp} \otimes_{\hat{A}} E(\kappa_v(K_n)).
\]
The module $E(\kappa_v(K_n))$ is clearly finite.
Moreover, as $v$ is ramified in the $\Z_p$-extension $\cK/K$, the direct system $\{\kappa_v(K_n)\}_n$ is stationary.

As in Definition \ref{defn:fld_ext2}(1), the exact sequence above becomes an exact sequence of direct systems.
The first term being stationary, we obtain
\[
\length_{A_{\fp}}(H_{S \cup S_{\ram}}(E/\cO_{K_n})_{\fp, \fin})
= \length_{A_{\fp}}(H_S(E/\cO_{K_n})_{\fp, \fin}) + \constant
\]
for $n \gg 0$.
We already know that the left hand side is $\mu_{\fp} p^n + \constant$ with $\mu_{\fp} = \mu^*(H_{S \cup S_{\ram}}(E/\cO_{\cK})_{\fp, \tors})$.

It remains only to show $\mu^*(H_{S \cup S_{\ram}}(E/\cO_{\cK})_{\fp, \tors}) = \mu^*(H_{S}(E/\cO_{\cK})_{\fp, \tors})$.
We also regard the exact sequence above as an exact sequence of inverse systems as in Definition \ref{defn:fld_ext2}(2).
We have
\[
\varprojlim_n (A_{\fp} \otimes_{\hat{A}} E(\cO_{K_n, v}))
\simeq  \varprojlim_n (A_{\fp} \otimes_{\hat{A}} E(\kappa_v(K_n)))
= 0
\]
for $v \in S_{\ram} \setminus S$, which implies an isomorphism
\[
H_{S \cup S_{\ram}}(E/\cO_{\cK})_{\fp} \simeq H_S(E/\cO_{\cK})_{\fp}.
\]
This completes the proof.
\end{proof}

To establish the formula for $\mu_{\fp}$ in Theorem \ref{thm:ICNF}, 
we note that $\mu^*(H_S(E/\cO_{\cK})_{\fp, \tors}) = \mu(H_S(E/\cO_{\cK})_{\fp})$ holds if $H_S(E/\cO_K)$ is finite in Theorem \ref{thm:ICNF1}.
Indeed, by the proof we have
\[
(H_S(E/\cO_{\cK})_{\fp})_{\Gamma}
\simeq (H_{S \cup S_{\ram}}(E/\cO_{\cK})_{\fp})_{\Gamma}
\simeq H_{S \cup S_{\ram}}(E/\cO_K)_{\fp}
\]
and this is finite.
It follows that $H_S(E/\cO_{\cK})_{\fp}$ is a torsion module whose characteristic ideal is prime to $\gamma - 1$, where $\gamma$ is a topological generator of $\Gamma$.
Hence the claim follows.

Finally, we establish Corollary \ref{cor:ICNF_ur}.

\begin{proof}[Proof of Corollary \ref{cor:ICNF_ur}]
Suppose that $\cK/K$ is unramified at all finite places.
We can apply Theorem \ref{thm:ICNF} for every prime $\fp$ of $A$.
For a prime $\fp$ such that $H(E/\cO_K)_{\fp} = 0$, by Proposition \ref{prop:descent4} and Nakayama's lemma, we see that $H(E/\cO_{K_n})_{\fp} = 0$ holds for all $n \geq 0$.
Therefore, by setting $\mu = \sum_{\fp} \mu_{\fp}$ and $\nu = \sum_{\fp} \nu_{\fp}$, we obtain Corollary \ref{cor:ICNF_ur}.
\end{proof}

\section{Notes on the $S$-modification}\label{sec:rank}

Let $K$ be a global $Q$-field and $E$ a Drinfeld $A$-module over $\cO_K$.

\subsection{An analogue of Leopoldt's conjecture}\label{ss:HU_rank}

Let $\fp$ be a prime of $A$.
We naturally have the following analogue of Leopoldt's conjecture (cf.~\S \ref{ss:Leop_0}).

\begin{conj}\label{conj:LC}
The natural homomorphism
\[
A_{\fp} \otimes_A U(E/\cO_K) \to \bigoplus_{v \in S_{\fp}} A_{\fp} \otimes_{\hat{A}} E(\cO_{K, v})
\]
is injective.
\end{conj}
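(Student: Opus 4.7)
The plan is to follow the strategy that has been successful for the classical Leopoldt conjecture in special cases (cf.\ \S \ref{ss:Leop_0}), reducing injectivity to a transcendence statement about $\fp$-adic Drinfeld logarithms. First, I would observe that $U(E/\cO_K)$ has $A$-rank $[K:Q] - r_E(K)$ by Proposition \ref{prop:UH_rank}(2), while the target has $A_{\fp}$-rank $\sum_{v \mid \fp}[K_v : Q_{\fp}] = [K:Q]$ by Proposition \ref{prop:loc_str}. Hence the source rank is at most the target rank, and the kernel of the map is necessarily $A_{\fp}$-torsion; the task reduces to showing that this torsion vanishes.

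Next, I would translate vanishing in the target into a logarithmic identity. By the proof of Proposition \ref{prop:loc_str}, the formal Drinfeld logarithm provides an $A_{\fp}$-isomorphism $E(\fm_{K,v}^s) \simeq \fm_{K,v}^s$ for $s$ sufficiently large. Choose an $A$-basis $u_1, \dots, u_d$ of $U(E/\cO_K)$ modulo torsion, where $d = [K:Q] - r_E(K)$. After multiplying each $u_i$ by a suitable nonzero element of $A$ to ensure it lies in $E(\fm_{K,v}^s)$ at every $\fp$-adic place $v$, any nontrivial element of the kernel yields a nontrivial $A_{\fp}$-linear relation
\[
\sum_{i=1}^{d} a_i \log_E(u_i) = 0 \quad \text{in} \ \bigoplus_{v \mid \fp} K_v.
\]
The conjecture is therefore equivalent to the assertion that the $\fp$-adic Drinfeld logarithms of $A$-linearly independent global units remain $A_{\fp}$-linearly independent after localization.

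The main obstacle is transcendence. This is a direct Drinfeld-module analogue of Leopoldt's conjecture, and in the number field case that conjecture is unresolved outside a handful of situations. The natural tools on the function-field side are J.~Yu's theorems on $\fp$-adic linear independence of Drinfeld logarithms and the $t$-motivic framework of Anderson--Brownawell--Papanikolas; however, just as Baker's theorem does not by itself suffice for arbitrary number fields, these tools fall short of treating arbitrary global units of arbitrary Drinfeld modules.

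A reasonable intermediate target is the Carlitz module $C$ with $K/\F_q(t)$ an abelian (in particular, Carlitz-cyclotomic) extension. There, the $\log_C$-values at the explicit cyclotomic units studied in \cite{AT15} admit closed-form expressions, and a Brumer-style averaging over the Galois group --- paralleling the proof of Leopoldt for abelian number fields --- should reduce Conjecture \ref{conj:LC} to a known $A_{\fp}$-linear independence statement for Carlitz logarithms of roots of unity.
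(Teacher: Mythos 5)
The statement you are asked to prove is stated in the paper as a \emph{conjecture} (the analogue of Leopoldt's conjecture), and the paper offers no proof of it: it only records that the single known case --- $E$ the Carlitz module and $K/Q$ the Carlitz $\fp$-cyclotomic extension --- is due to Angl\`es--Taelman \cite[Theorem 9.8]{AT15}, resting on Bosser's transcendence results for Carlitz logarithms. Your proposal correctly identifies this framing: the reduction of injectivity to an $A_{\fp}$-linear independence statement for $\fp$-adic Drinfeld logarithms of global units (via the formal logarithm isomorphism $E(\fm_{K,v}^s)\simeq \fm_{K,v}^s$ from the proof of Proposition \ref{prop:loc_str}), and the observation that the only accessible cases are Carlitz-cyclotomic ones handled by a Brumer-style argument. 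This matches the paper's own discussion in \S\ref{ss:HU_rank}. But it is not a proof, and you say as much yourself; the transcendence input needed for a general Drinfeld module and a general global $Q$-field is precisely the open part.

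One concrete logical slip worth fixing even in this conditional setting: you argue that since $\rank_A U(E/\cO_K) = [K:Q]-r_E(K)$ is at most the target rank $[K:Q]$, ``the kernel of the map is necessarily $A_{\fp}$-torsion,'' and that the task reduces to killing this torsion. A rank inequality between source and target gives no control on the kernel (the zero map already violates your claim), and the situation is in fact the opposite of what you describe: as the paper notes in the proof of Proposition \ref{prop:LC_rank}, the kernel is \emph{torsion-free} over $A_{\fp}$, because the torsion of $U(E/\cO_K)$ sits inside $E(\cO_K)$ and injects into the local points. Consequently the content of the conjecture is exactly that the kernel has $A_{\fp}$-rank zero --- equivalently, that no nontrivial $A_{\fp}$-linear relation among the logarithms $\log_E(u_i)$ exists --- which is the linear independence statement you arrive at, but reached for the wrong reason in your write-up.
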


By Angl\`{e}s--Taelman \cite[Theorem 9.8]{AT15}, Conjecture \ref{conj:LC} is known to be true in the case where $E$ is the Carlitz module $C$ and $K/Q$ is the Carlitz $\fp$-cyclotomic extension \cite[\S 2.4]{AT15}.
The method is analogous to the proof of Leopoldt's conjecture for abelian number fields; we make use of the transcendency of logarithms, established by Bosser \cite[Appendix]{AT15} in this case.

\begin{prop}\label{prop:LC_rank}
For any finite set $S$ of finite places of $K$ such that $S \supset S_{\fp}$, the following are equivalent.
\begin{itemize}
\item[(i)]
Conjecture \ref{conj:LC} holds.
\item[(ii)]
We have $A_{\fp} \otimes_{\hat{A}} \hat{U}_{S}(E/\cO_K) = 0$.
\item[(iii)]
We have $\rank_{A_{\fp}}(H_S(E/\cO_K)_{\fp}) = r_E(K)$.
\end{itemize}
\end{prop}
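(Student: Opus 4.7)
The plan is to package the three conditions into a single five-term exact sequence and then read off the equivalences. First I would apply Proposition \ref{prop:compar2} with $T = \emptyset$ and tensor the resulting sequence with $A_{\fp}$ over $\hat{A}$. Since $\hat{A} \simeq \prod_{\fq} A_{\fq}$, this tensor product is exact (it is the projection onto the $\fp$-factor), and for each finite place $v \nmid \fp$, say $v \mid \fq$, it annihilates $E(\cO_{K,v})$ because Proposition \ref{prop:loc_str} shows the $\hat{A}$-action on $E(\cO_{K,v})$ factors through $A_{\fq}$. Using $S \supset S_{\fp}$, we obtain
\[
0 \to A_{\fp} \otimes_{\hat{A}} \hat{U}_S(E/\cO_K) \to A_{\fp} \otimes_A U(E/\cO_K) \xrightarrow{\phi} \bigoplus_{v \in S_{\fp}} E(\cO_{K,v}) \to H_S(E/\cO_K)_{\fp} \to H(E/\cO_K)_{\fp} \to 0.
\]
The middle map $\phi$ coincides with the map in Conjecture \ref{conj:LC}, since both arise from the natural diagonal $E(\cO_K) \to \bigoplus_{v \in S_{\fp}} E(\cO_{K,v})$.

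The equivalence (i) $\Leftrightarrow$ (ii) is now immediate, as Conjecture \ref{conj:LC} is exactly the statement $\ker \phi = 0$. For (ii) $\Leftrightarrow$ (iii), I would take the alternating sum of $A_{\fp}$-ranks in the exact sequence, using Proposition \ref{prop:UH_rank} (giving $\rank_A U(E/\cO_K) = [K:Q] - r_E(K)$ and $H(E/\cO_K)$ finite) together with Proposition \ref{prop:loc_str} (giving $\rank_{A_{\fp}} \bigoplus_{v \in S_{\fp}} E(\cO_{K,v}) = [K:Q]$). This yields
\[
\rank_{A_{\fp}} \bigl( A_{\fp} \otimes_{\hat{A}} \hat{U}_S(E/\cO_K) \bigr) = \rank_{A_{\fp}} H_S(E/\cO_K)_{\fp} - r_E(K).
\]
Hence (iii) is equivalent to $A_{\fp} \otimes_{\hat{A}} \hat{U}_S(E/\cO_K)$ being an $A_{\fp}$-torsion module, and it remains only to upgrade this to outright vanishing.

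This upgrade is the main obstacle, which I would settle by establishing torsion-freeness of $A_{\fp} \otimes_{\hat{A}} \hat{U}_S(E/\cO_K)$. It embeds into $A_{\fp} \otimes_A U(E/\cO_K)$, whose $A_{\fp}$-torsion is the finite subgroup $U(E/\cO_K)_{\tors}[\fp^{\infty}] \subset E(\cO_K)$. On this subgroup $\phi$ is simply the diagonal $E(\cO_K) \hookrightarrow \bigoplus_{v \in S_{\fp}} \cO_{K,v}$, which is injective because $S_{\fp}$ is nonempty (a maximal ideal of $\cO_K$ lies above $\fp$ by integrality) and $\cO_K \hookrightarrow \cO_{K,v}$ is injective at any such $v$. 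Therefore $\ker \phi = A_{\fp} \otimes_{\hat{A}} \hat{U}_S(E/\cO_K)$ contains no nonzero torsion, and being finitely generated over the DVR $A_{\fp}$ (as a submodule of the f.g.\ module $A_{\fp} \otimes_A U(E/\cO_K)$), it vanishes as soon as its rank is zero. This completes (iii) $\Rightarrow$ (ii).
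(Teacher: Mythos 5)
Your overall strategy is the same as the paper's: tensor the five-term sequence of Proposition \ref{prop:compar2} (with $T=\emptyset$) by $A_{\fp}$ over $\hat{A}$, identify the middle map with the Leopoldt map to get (i)$\Leftrightarrow$(ii), count $A_{\fp}$-ranks for (ii)$\Leftrightarrow$(iii), and use torsion-freeness of the kernel to pass from ``rank zero'' to ``zero''. Your last paragraph in fact supplies the justification for the torsion-freeness that the paper only asserts, which is a welcome addition.

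There is, however, one incorrect step. You claim that for $v\nmid\fp$ the functor $A_{\fp}\otimes_{\hat{A}}(-)$ annihilates $E(\cO_{K,v})$ because its $\hat{A}$-action factors through $A_{\fq}$. Proposition \ref{prop:loc_str} only gives the $A_{\fq}$-module structure on $E(\fm_{K,v})$, not on $E(\cO_{K,v})$: the quotient $E(\cO_{K,v})/E(\fm_{K,v})\simeq E(\kappa_v(K))$ is a finite $A$-module whose $\fp$-primary part is nonzero in general (e.g.\ for the Carlitz module $C(\kappa_v(K))$ has nontrivial $\fp$-part for many $\fp\neq\fq$). So $A_{\fp}\otimes_{\hat{A}}E(\cO_{K,v})$ is finite but not necessarily zero, and for $S\supsetneq S_{\fp}$ your displayed exact sequence is missing finite summands $\bigoplus_{v\in S\setminus S_{\fp}}A_{\fp}\otimes_{\hat{A}}E(\cO_{K,v})$ in the third term. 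This is repairable with what you already have: the extra summands have rank zero, so the rank count for (ii)$\Leftrightarrow$(iii) is unaffected; and for (i)$\Leftrightarrow$(ii) one notes that the kernel of the Leopoldt map is torsion-free (your argument), hence cannot inject into the finite extra factors, so it vanishes if and only if the kernel of the full middle map does. The paper sidesteps this by invoking only the \emph{finiteness} of $A_{\fp}\otimes_{\hat{A}}E(\cO_{K,v})$ for $v\nmid\fp$ and reducing at the outset to $S=S_{\fp}$. Please correct the vanishing claim accordingly.
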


\begin{proof}
Note that the kernel of the map in Conjecture \ref{conj:LC} is torsion-free over $A_{\fp}$, so (ii) is equivalent to the finiteness of $A_{\fp} \otimes_{\hat{A}} \hat{U}_{S}(E/\cO_K)$.
Then we may assume $S = S_{\fp}$ because of Proposition \ref{prop:compar2} and the finiteness of $A_{\fp} \otimes_{\hat{A}} E(\cO_{K, v})$ for $v \nmid \fp$.

We use the exact sequence in Proposition \ref{prop:compar2} with $T = \emptyset$, to which we moreover apply $A_{\fp} \otimes_{\hat{A}} (-)$.
Then the equivalence (i) $\Leftrightarrow$ (ii) is clear.
To show the equivalence (ii) $\Leftrightarrow$ (iii), we observe the ranks of the other modules:
\begin{itemize}
\item
We have $\rank_{A_{\fp}}(A_{\fp} \otimes_A U(E/\cO_K)) = [K: Q] - r_E(K)$ (Proposition \ref{prop:UH_rank}(2)).
\item
We have $\rank_{A_{\fp}}(\bigoplus_{v \mid \fp} A_{\fp} \otimes_{\hat{A}} E(\cO_{K, v})) = [K: Q]$ (Proposition \ref{prop:loc_str}).
\item
We have $\rank_{A_{\fp}}(A_{\fp} \otimes_A H(E/\cO_K)) = 0$ (Proposition \ref{prop:UH_rank}(1)).
\end{itemize}
Now (ii) $\Leftrightarrow$ (iii) follows.
\end{proof}

Let us determine the ranks of the Iwasawa modules, assuming the analogue of Leopoldt's conjecture.

\begin{prop}\label{prop:rank_Iw}
Let $\cK/K$ be a $\Z_p$-extension of global $Q$-fields such that $S_{\ram}(\cK/K)$ is finite and $\cK/K$ is totally split at any infinite places.
Suppose that Conjecture \ref{conj:LC} holds for $K_n$ for every $n$.
Then, for any finite set $S$ of finite places of $K$ such that $S \supset S_{\fp}$, we have
\[
\rank_{A_{\fp}[[\Gamma]]} H_S(E/\cO_{\cK})_{\fp} = r_E(K).
\]
\end{prop}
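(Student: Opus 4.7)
The plan is to compute the $A_\fp[[\Gamma]]$-rank of $M := H_S(E/\cO_\cK)_\fp$ by reading off the $A_\fp$-ranks of its layer-wise coinvariants $M_{\Gamma^{p^n}}$, using the assumed Leopoldt-type conjecture at each layer and then inverting the relation via the structure theory of finitely generated modules over $A_\fp[[\Gamma]] \simeq A_\fp[[T]]$.

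First I would reduce to the case $S \supset S_{\ram}(\cK/K)$. For $v \in S_{\ram} \setminus S$, the hypothesis $S \supset S_\fp$ implies $v \notin S_\fp$, so Proposition \ref{prop:loc_str} gives $A_\fp \otimes_{\hat A} E(\cO_{K_n,v}) \simeq A_\fp \otimes_{\hat A} E(\kappa_v(K_n))$, and the argument at the end of the proof of Theorem \ref{thm:ICNF1} (stabilization of residue fields together with the vanishing of the corresponding trace maps on residue fields in the wildly ramified $\Z_p$-tower) yields $\varprojlim_n A_\fp \otimes_{\hat A} E(\cO_{K_n,v}) = 0$. Applying Proposition \ref{prop:compar3} to the pair $(S \cup S_{\ram}, S)$ then gives $H_{S \cup S_{\ram}}(E/\cO_\cK)_\fp \simeq M$, so we may replace $S$ by $S \cup S_{\ram}$ and assume $S \supset S_{\ram} \cup S_\fp$. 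Now Proposition \ref{prop:descent4} furnishes the Galois descent isomorphism $M_{\Gamma^{p^n}} \simeq H_S(E/\cO_{K_n})_\fp$ for every $n \geq 0$. Since $S_{K_n} \supset S_\fp(K_n)$ and Conjecture \ref{conj:LC} is assumed for $K_n$, Proposition \ref{prop:LC_rank} applied to $K_n$ gives $\rank_{A_\fp}(M_{\Gamma^{p^n}}) = r_E(K_n)$. The hypothesis that $\cK/K$ is totally split at every infinite place means that each $v \in S_\infty(K)$ has exactly $p^n$ places $w$ of $K_n$ above it with $(K_n)_w = K_v$, whence $\Lambda_E((K_n)_w) = \Lambda_E(K_v)$ and $r_E(K_n) = p^n \cdot r_E(K)$.

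It remains to recover $\rank_{A_\fp[[\Gamma]]}(M) = r_E(K)$ from the equality $\rank_{A_\fp}(M_{\Gamma^{p^n}}) = p^n \cdot r_E(K)$ valid for every $n \geq 0$, using that $M$ is finitely generated over $A_\fp[[\Gamma]]$ by Proposition \ref{prop:Iw_fg}. In equal characteristic $p$ the identity $(1+T)^{p^n} - 1 = T^{p^n}$ makes the free part of $M$ contribute $A_\fp$-rank exactly $p^n \cdot \rank_{A_\fp[[\Gamma]]}(M)$ to $M_{\Gamma^{p^n}}$, while for any finitely generated torsion $A_\fp[[\Gamma]]$-module $N$ the quotient $N/T^{p^n} N$ has $A_\fp$-rank bounded independently of $n$ (reduction via the structure theorem to the elementary torsion modules $A_\fp[[\Gamma]]/\pi^a$ and $A_\fp[[\Gamma]]/f$ with $f$ distinguished, which give $A_\fp$-rank $0$ and at most $\deg f$ respectively). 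Thus $\rank_{A_\fp}(M_{\Gamma^{p^n}}) = p^n \rank_{A_\fp[[\Gamma]]}(M) + c_n$ with $c_n$ bounded in $n$, and comparison forces $\rank_{A_\fp[[\Gamma]]}(M) = r_E(K)$. The main obstacle is isolating this algebraic rank asymptotic formally: whereas Theorem \ref{thm:alg} as cited in Theorem \ref{thm:ICNF1} is phrased in terms of the length of the finite part of the coinvariants, here the companion statement about generic $A_\fp$-rank is what is required. Once it is recorded in \S \ref{sec:alg}, the rest of the argument is a direct combination of results already developed in \S \ref{sec:clgp} and \S \ref{sec:app}.
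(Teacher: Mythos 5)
Your proposal is correct and follows essentially the same route as the paper: reduce to $S \supset S_{\ram}$ via the isomorphism $H_{S\cup S_{\ram}}(E/\cO_{\cK})_{\fp}\simeq H_S(E/\cO_{\cK})_{\fp}$ from the proof of Theorem \ref{thm:ICNF1}, apply Galois descent (Proposition \ref{prop:descent4}) and Proposition \ref{prop:LC_rank} at each layer to get $\rank_{A_{\fp}}(M_{\Gamma^{p^n}})=p^n r_E(K)$, and invert this rank asymptotic. The only difference is that the algebraic statement you sketch at the end is already recorded in the paper as Theorem \ref{thm:alg}(1), so no new lemma needs to be added.
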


\begin{proof}
In the proof of Theorem \ref{thm:ICNF1}, we showed an isomorphism $H_{S \cup S_{\ram}}(E/\cO_{\cK})_{\fp}
\simeq H_S(E/\cO_{\cK})_{\fp}$.
Therefore, we may assume $S \supset S_{\ram}$.
In this case, Propositions \ref{prop:descent4} and \ref{prop:LC_rank} show
\[
\rank_{A_{\fp}} (H_S(E/\cO_{\cK})_{\fp})_{\Gamma^{p^n}}
= \rank_{A_{\fp}} H_S(E/\cO_{K_n})_{\fp} 
= r_E(K_n) = p^n r_E(K)
\]
for each $n \geq 0$, where the final equation follows from the assumption that $K_n/K$ is totally split at any infinite places.
Now Theorem \ref{thm:alg}(1) completes the proof.
\end{proof}

\subsection{The effect of ramified places}\label{ss:ram_pl}

Let $\cK/K$ be a $\Z_p$-extension of global $Q$-fields and $K_n$ its $n$-th layer.
We still assume that $S_{\ram} = S_{\ram}(\cK/K)$ is finite.
In the proof of Theorem \ref{thm:ICNF1}, we observed $H_{S \cup S_{\ram}}(E/\cO_{\cK})_{\fp}
\simeq H_S(E/\cO_{\cK})_{\fp}$ as long as $S \supset S_{\fp} \cap S_{\ram}$.
In this subsection, we remove the last condition:

\begin{thm}\label{thm:univ_Tr2}
For any finite set $S$ of finite places of $K$, we have $H_{S \cup S_{\ram}}(E/\cO_{\cK}) \simeq H_S(E/\cO_{\cK})$.
\end{thm}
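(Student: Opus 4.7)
The plan is to apply Proposition \ref{prop:compar3} with $T = S$ and $S \cup S_{\ram}$ in place of $S$, producing the exact sequence
\[
\bigoplus_{v \in S_{\ram} \setminus S} \varprojlim_n E(\cO_{K_n, v})
\to H_{S \cup S_{\ram}}(E/\cO_{\cK})
\to H_S(E/\cO_{\cK})
\to 0,
\]
where the transition maps on $E(\cO_{K_n, v})$ are the Drinfeld module traces (which coincide with the field traces, since the $q$-th power map is additive in characteristic $p$ and thus commutes with $\Tr$). As the right arrow is surjective, it suffices to show that $\varprojlim_n E(\cO_{K_n, v}) = 0$ for every $v \in S_{\ram} \setminus S$; this then forces $H_{S \cup S_{\ram}}(E/\cO_{\cK}) \to H_S(E/\cO_{\cK})$ to be an isomorphism.

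Fix such $v$, let $\fq \subset A$ be the prime below $v$, and choose $n_0$ so that $v$ is totally ramified in $\cK/K_{n_0}$. Taking $\varprojlim_n$ of the short exact sequence
\[
0 \to E(\fm_{K_n, v}) \to E(\cO_{K_n, v}) \to E(\kappa_v(K_n)) \to 0,
\]
it suffices to prove that both outer inverse limits vanish. For the residue part, $\kappa_v(K_n)$ stabilizes to a fixed finite field $\kappa$ for $n \geq n_0$, and the transition map $\Tr_{K_{n+1, v}/K_{n, v}} = \sum_{\sigma \in \Gal(K_{n+1, v}/K_{n, v})} \sigma$ reduces on $E(\kappa)$ to multiplication by the degree $[K_{n+1, v}: K_{n, v}] = p$, which vanishes in characteristic $p$. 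Hence the transition maps on $\{E(\kappa_v(K_n))\}_n$ are eventually zero, so $\varprojlim_n E(\kappa_v(K_n)) = 0$.

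For the formal part, $E(\fm_{K_n, v})$ is a finitely generated $A_{\fq}$-module complete in its $\fm_{K_n, v}$-adic topology, and for $s$ large the logarithm yields an $A_{\fq}$-isomorphism $E(\fm_{K_n, v}^s) \simeq \fm_{K_n, v}^s$ intertwining the Drinfeld module trace with the field trace. Because $\cK_v/K_{n_0, v}$ is a wildly totally ramified $\Z_p$-extension of local fields of residue characteristic $p$, the exponents of the differents of the local extensions $K_{n, v}/K_{n_0, v}$ grow without bound, so the iterated field traces $\Tr_{K_{n, v}/K_{n_0, v}}(\cO_{K_n, v})$ are eventually contained in arbitrarily high powers of $\fm_{K_{n_0}, v}$ (a classical local Iwasawa-theoretic computation; cf.~\cite[Chap.~III]{Ser79}). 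Combining this with the log identification yields $\varprojlim_n E(\fm_{K_n, v}) = 0$, which completes the proof.

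I expect the main obstacle to be the local vanishing $\varprojlim_n \cO_{K_n, v} = 0$ along the wildly ramified $\Z_p$-tower, which requires quantitative control of the different filtration on the local extensions; by contrast, the descent step via Proposition \ref{prop:compar3} and the residue-field computation are essentially formal.
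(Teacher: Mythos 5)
Your proposal is correct and follows essentially the same route as the paper: reduce via Proposition \ref{prop:compar3} to showing $\varprojlim_n E(\cO_{K_n,v})=0$ for each ramified $v$, and deduce this from the unbounded growth of $v_{F_n}(\cD_{F_n/F})/p^n$ along a wildly ramified $\Z_p$-tower of local fields. Two remarks. First, your splitting into the formal part and the residue part, and the appeal to the logarithm isomorphism $E(\fm^s)\simeq\fm^s$, are unnecessary: since the functor $E$ does not change the underlying $\F_q[G]$-module structure (as noted in \S\ref{ss:Drinf}), the Drinfeld trace on $E(\cO_{K_n,v})$ \emph{is} the field trace on $\cO_{K_n,v}$, so the statement is independent of $E$ and reduces directly to the pure local-field assertion $\varprojlim_n \cO_{F_n}=0$; this is exactly how the paper proceeds (Proposition \ref{prop:univ_Tr1}). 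Second, that local assertion is the actual content of the theorem, and you only cite it as ``classical''; the paper supplies a complete proof, computing $v_{F_n}(\cD_n)$ via the higher ramification filtration and the Hasse--Arf theorem to get $v_{F_n}(\cD_n)/p^n\ge (1-p^{-1})(1+i_0+\cdots+i_{n-1})\to\infty$. Your sketch of why it holds (growth of the different) is the right reason, but a self-contained write-up would need that computation or a precise reference.
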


\begin{rem}\label{rem:non-tors}
In particular, we have $H(E/\cK) \simeq H_{S_{\ram}}(E/\cK)$.
This together with Proposition \ref{prop:descent4} tells us that, although $H(E/\cK)$ is defined as $\varprojlim_n H(E/K_n)$, its Galois coinvariant describes $H_{S_{\ram}}(E/K_n)$ instead.
For this reason, it seems difficult to remove the assumption $S \supset S_{\fp} \cap S_{\ram}$ from Theorem \ref{thm:ICNF1}.
We also see that $H(E/\cK)$ is not a torsion module in general, because of Proposition \ref{prop:rank_Iw}.

This phenomenon is in contrast to number field setting.
For a $\Z_p$-extension $\cK/K$ of number fields with $K_n$ its $n$-th layer, the Iwasawa module $X(\cK) = \varprojlim_n \Z_p \otimes_{\Z} \Cl(K_n)$ is known to be torsion over the Iwasawa algebra and have enough information to describe the behavior of $\length_{\Z_p}(\Z_p \otimes \Cl(K_n))$.
\end{rem}

To prove Theorem \ref{thm:univ_Tr2}, by Proposition \ref{prop:compar3}, we only have to show $\varprojlim_n E(\cO_{K_n, v}) = 0$ for any $v \in S_{\ram}$.
This statement is independent from the Drinfeld module $E$, and follows from the following general proposition on local fields.

\begin{prop}\label{prop:univ_Tr1}
Let $F$ be a local field (i.e., a complete discrete valuation field with finite residue field) with residue characteristic $p$.
(Note that the characteristic of $F$ is either $0$ or $p$.)
Let $\cF/F$ be a ramified $\Z_p$-extension with $F_n$ its $n$-th layer.
Then we have $\varprojlim_n \cO_{F_n} = 0$, the projective limit being taken with respect to the trace maps.
\end{prop}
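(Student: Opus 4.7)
The plan is to reduce to the totally ramified case, and then combine an explicit formula for the trace of the maximal order with the classical growth of ramification in $\Z_p$-extensions.

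First, since $\cF/F$ is ramified, the inertia subgroup of $\Gal(\cF/F) \cong \Z_p$ is open; after replacing $F$ by a suitably large layer $F_m$, which does not alter $\varprojlim_n \cO_{F_n}$, I may assume that $\cF/F$ is totally ramified, so that each step $F_{n+1}/F_n$ is totally wildly ramified of degree $p$.

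Next comes the key computation. From the characterization of the inverse different $\fd_{F_n/F}^{-1}$ as the $\cO_F$-dual of $\cO_{F_n}$ under the trace pairing, one derives the formula
\[
\Tr_{F_n/F}(\cO_{F_n}) \;=\; \pi_F^{\lfloor v_{F_n}(\fd_{F_n/F})/p^n \rfloor}\,\cO_F.
\]
Writing $d_k := v_{F_{k+1}}(\fd_{F_{k+1}/F_k})$ and applying the tower formula for differents, the exponent rewrites as $\sum_{k=0}^{n-1} d_k/p^{k+1}$, so the crux is to show that this series diverges as $n\to\infty$. This is exactly the growth of ramification in a $\Z_p$-extension of a local field of residue characteristic $p$: in characteristic zero by Sen's theorem, and in characteristic $p$ by a direct Artin--Schreier--Witt analysis, one has $d_k \sim c\cdot p^k$ for some $c>0$, which forces divergence.

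The conclusion is then immediate. Given $(x_n)\in \varprojlim_n \cO_{F_n}$ and any $m\geq 0$, compatibility with the trace maps yields $x_m = \Tr_{F_n/F_m}(x_n) \in \Tr_{F_n/F_m}(\cO_{F_n})$ for every $n\geq m$, so $v_{F_m}(x_m)$ is bounded below by a sequence tending to infinity, forcing $x_m=0$. Since this holds for every $m$, the inverse limit vanishes. The main obstacle is the quantitative ramification growth, which is a classical but non-elementary input and requires slightly different treatments in the equal- and mixed-characteristic cases.
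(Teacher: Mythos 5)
Your argument is correct and shares its skeleton with the paper's proof: reduce to the totally ramified case, identify $\Tr_{F_n/F}(\cO_{F_n})$ as $\pi_F^{\lfloor v_{F_n}(\fd_{F_n/F})/p^n\rfloor}\cO_F$, and show that $v_{F_n}(\fd_{F_n/F})/p^n\to\infty$. The genuine difference is the input used for this last divergence. You appeal to Sen's theorem in mixed characteristic and to an Artin--Schreier--Witt analysis in equal characteristic to get $d_k\gtrsim c\,p^k$; the paper instead gives a single, characteristic-independent and essentially elementary argument: by the Hasse--Arf theorem and the compatibility of the lower-numbering ramification filtration along the tower, $\Gal(F_n/F)$ has $n$ distinct integer jumps $i_0,\dots,i_{n-1}\geq 1$, whence $v_{F_n}(\fd_{F_n/F})/p^n\geq (1-p^{-1})(1+i_0+\cdots+i_{n-1})\geq (1-p^{-1})\,n$, which already diverges. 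Your route buys a sharper growth rate (linear divergence with an explicit constant tied to the eventual break spacing), but at the cost of a heavier theorem and a case split; the paper's route needs only that each layer contributes at least one new integral break. One small imprecision in your write-up: in equal characteristic $p$ the upper ramification breaks of a $\Z_p$-extension can grow much faster than geometrically, so the asymptotic equivalence $d_k\sim c\,p^k$ is not literally correct there; only the one-sided lower bound holds in general. Since all you use is that $\sum_k d_k/p^{k+1}$ diverges, this does not affect the validity of the proof, but the claim should be stated as an inequality.
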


\begin{proof}
We may assume that the $\Z_p$-extension is totally ramified, and we only have to show $\bigcap_{n \geq 0} \Tr_{F_n/F}(\cO_{F_n}) = 0$.
Let $v_F$ and $v_{F_n}$ be the normalized valuation on $F$ and $F_n$, respectively.
Let $\cD_n = \cD_{F_n/F}$ be the different ideal of $F_n/F$ (\cite[Chap.III~\S 3]{Ser79}).
By definition $\cD_n$ is the minimal ideal of $\cO_{F_n}$ satisfying $\Tr_{F_n/F}(\cD_{F_n/F}^{-1}) \subset \cO_F$.
We also have
\[
v_F(\Tr_{F_n/F}(\cO_{F_n})) = \left \lfloor v_{F_n}(\cD_n)/p^n \right \rfloor,
\]
where we use the assumption that $F_n/F$ is totally ramified, so the ramification index is $p^n$.
Therefore, the claim is equivalent to $\lim_{n \to \infty} v_{F_n}(\cD_n)/p^n = + \infty$.

To study $\cD_n$, we use the higher ramification groups (\cite[Chap.~IV, \S 1]{Ser79}).
For $i \geq -1$, we define $G_i^{(n)} \subset \Gal(F_n/F)$ as the stabilizer subgroup of the action $\Gal(F_n/F)$ on $\cO_{F_n}/\fm_{F_n}^{i+1}$.
Then it is known (\cite[Chap.~IV, \S 1, Proposition 4]{Ser79}) that
\[
v_{F_n}(\cD_n) = \sum_{i=0}^{\infty} (\# G_i^{(n)} - 1).
\]
By the theorem of Hasse--Arf  (\cite[page 76, Example]{Ser79}), there are positive integers $i_0^{(n)}, i_1^{(n)}, \dots, i_{n-1}^{(n)}$ such that
\[
G_i^{(n)} = G \quad \text{ if $0 \leq i \leq i^{(n)}_0$},
\]
\[
G_i^{(n)} = G^{p^j} \quad \text{if $i^{(n)}_0 + pi^{(n)}_1 + \dots + p^{j-1} i^{(n)}_{j-1} < i \leq i^{(n)}_0 + pi^{(n)}_1 + \dots + p^j i^{(n)}_j$}
\]
for $j = 1, 2, \dots, n-1$, and
\[
G_i^{(n)} = \{\id\} \quad \text{if $i^{(n)}_0 + pi^{(n)}_1 + \dots + p^{n-1} i^{(n)}_{n-1} < i$}.
\]
When $n$ varies, we moreover have (\cite[page 64, Corollary]{Ser79})
\begin{align*}
i_0 & := i_0^{(1)} = i_0^{(2)} = i_0^{(3)} = \cdots,\\
i_1 & := i_1^{(2)} = i_1^{(3)} = i_1^{(4)} = \cdots,\\
i_2 & := i_2^{(3)} = i_2^{(4)} = i_2^{(5)} = \cdots,\\
\vdots
\end{align*}
Therefore, we obtain
\[
v_{F_n}(\cD_n) 
= (p^n-1) (i_0+1) + (p^{n-1} - 1) p i_1 + (p^{n-2}-1) p^2 i_2 + \dots + (p-1) p^{n-1} i_{n-1},
\]
so
\begin{align*}
v_{F_n}(\cD_n) / p^n
& = (1 - p^{-n}) (1 + i_0) + (1 - p^{-n+1}) i_1 + (1 - p^{-n+2}) i_2 + \dots + (1 - p^{-1})  i_{n-1}\\
& \geq (1 - p^{-1}) (1 + i_0 + i_1 + i_2 + \dots + i_{n-1}).
\end{align*}
Since $i_0, i_1, i_2, \dots$ are positive integers, we conclude $\lim_{n \to \infty} v_{F_n}(\cD_n)/p^n = + \infty$ as claimed.
\end{proof}

\appendix

\section{The algebraic theorem}\label{sec:alg}

The purpose of this section is to establish a module-theoretic version of Iwasawa's asymptotic formula in positive characteristic (Theorem \ref{thm:alg}).
In the number field setting, we study modules over the power series ring $\Z_p[[T]]$ and their quotients modulo $(1+T)^{p^n}-1$.
In the function field setting, we are naturally led to study modules over $A_{\fp}[[T]]$ and their quotients modulo $(1+T)^{p^n}-1 = T^{p^n}$.
To the best of the authors' knowledge, such an alternative has not appeared in the literature.

From now on, let $R$ be a complete discrete valuation ring whose characteristic is a prime number $p$, which plays the role of $A_{\fp}$ in the main text.
We write $\ord_R: R \to \bN \cup \{\infty \}$ for the normalized valuation on $R$.
We write $(-)_{\fin}$ for the maximal submodule of finite length for finitely generated $R$-modules.

Let $\Gamma$ be a topological group that is isomorphic to $\Z_p$.
Let
\[
R[[\Gamma]] := \varprojlim_n R[\Gamma/\Gamma^{p^n}]
\]
be the completed group ring.
We have the following analogue of Serre's isomorphism (see \cite[Proposition (5.3.5)]{NSW08} or \cite[Theorem 7.1]{Was97}).

\begin{lem}\label{lem:gprg}
Let $\gamma$ be a topological generator of $\Gamma$.
We have isomorphisms
\[
R[\Gamma/\Gamma^{p^n}] 
\simeq R[T]/(T^{p^n})
\]
for $n \geq 0$ and
\[
R[[\Gamma]] \simeq R[[T]]
\]
by sending $\gamma$ to $1 + T$.
\end{lem}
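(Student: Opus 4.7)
The plan is to establish the finite-level isomorphism first and then pass to the inverse limit.

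At the finite level, I would define an $R$-algebra homomorphism $\varphi_n\colon R[T] \to R[\Gamma/\Gamma^{p^n}]$ by $T \mapsto \gamma - 1$, where $\gamma$ is understood as its image in the finite quotient. The crux is to verify that $\varphi_n$ factors through $R[T]/(T^{p^n})$, i.e.\ that $T^{p^n}$ maps to $0$. This is where the characteristic-$p$ hypothesis on $R$ is essential: the Frobenius identity in $R[T]$ gives
\[
(1+T)^{p^n} = 1 + T^{p^n},
\]
so that $\varphi_n(T^{p^n}) = \varphi_n((1+T)^{p^n} - 1) = \gamma^{p^n} - 1 = 0$. The induced map $R[T]/(T^{p^n}) \to R[\Gamma/\Gamma^{p^n}]$ is surjective because $\gamma$ generates its target as an $R$-algebra. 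Moreover, both sides are free $R$-modules of rank $p^n$, with $R$-bases $\{1, T, \dots, T^{p^n-1}\}$ and $\{1, \gamma, \dots, \gamma^{p^n-1}\}$ respectively, so any $R$-linear surjection between them is automatically an isomorphism.

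Then I would take the inverse limit in $n$. The finite-level isomorphisms are compatible with the natural transition maps on both sides (all are determined by $1+T \mapsto \gamma$), so they assemble into an isomorphism $R[[\Gamma]] \simeq \varprojlim_n R[T]/(T^{p^n})$, using the definition $R[[\Gamma]] = \varprojlim_n R[\Gamma/\Gamma^{p^n}]$. Finally, this limit is identified with $R[[T]]$ because the ideals $(T^{p^n})$ are cofinal in the $T$-adic filtration on $R[T]$.

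There is essentially no obstacle beyond the Frobenius identity: once $(1+T)^{p^n} = 1 + T^{p^n}$ is invoked, the rest is a rank and cofinality check. The entire content of the lemma lies in this characteristic-$p$ collapse. In the classical mixed-characteristic Iwasawa setting one must treat $(1+T)^{p^n} - 1$ as a distinguished Weierstrass polynomial, and that added rigidity is what forces a nontrivial $\lambda$-invariant to appear; its absence in the present setting ultimately reflects the simpler form of this lemma, as alluded to in \S\ref{ss:intro_4}.
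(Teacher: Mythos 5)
Your proof is correct and follows essentially the same route as the paper: both rest on the standard identification $R[\Gamma/\Gamma^{p^n}]\simeq R[T]/((1+T)^{p^n}-1)$ together with the characteristic-$p$ identity $(1+T)^{p^n}-1=T^{p^n}$, followed by passage to the inverse limit. Your version merely spells out the well-definedness, surjectivity, and rank count that the paper leaves implicit.
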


\begin{proof}
The first one follows from
\[
R[\Gamma/\Gamma^{p^n}] 
\simeq R[T]/((1+T)^{p^n}-1)
= R[T]/(T^{p^n})
\]
since $p = 0$ in $R$.
By taking the inverse limit, the second one follows.
\end{proof}

For each $f \in R[[T]] \setminus \{0\}$, 
we define $\ord_T(f) \in \bN$ and $f^* \in R[[T]] \setminus T R[[T]]$ by
\[
f(T) = T^{\ord_T(f)} f^*(T).
\]
Note that $f^* \not \in T R[[T]]$ is equivalent to $f^*(0) \neq 0$.

We briefly recall the structure theorem for $R[[T]]$-modules (see \cite[Chapter V, \S 1]{NSW08}).

\begin{defn}\label{defn:str_thm}
An $R[[T]]$-module is said to be pseudo-null if it is of finite length.
An $R[[T]]$-homomorphism between finitely generated modules is said to be pseudo-isomorphic if its kernel and cokernel are both pseudo-null.

For a finitely generated $R[[T]]$-module $M$, the structure theorem states that there exists a pseudo-isomorphism
\[
\varphi: M \to R[[T]]^r \oplus \bigoplus_{i = 1}^s R[[T]]/(f_i)
\]
for $r = \rank_{R[[T]]}(M)$ and some non-zero elements $f_1, \dots, f_s \in R[[T]]$.
The target module of $\varphi$ is often called an elementary module.
In this case, the $R[[T]]$-torsion part of $M$, denoted by $M_{\tors}$, is pseudo-isomorphic to $\bigoplus_{i = 1}^s R[[T]]/(f_i)$.
The characteristic ideal of $M_{\tors}$ is defined as $\cha(M_{\tors}) = (f_1 \cdots f_s)$.

Suppose $M$ is a finitely generated torsion $R[[T]]$-module.
Let $F$ be a generator of $\cha(M)$ (i.e., $\cha(M) = (F)$).
We define $\mu(M) = \ord_R(F(0))$ if $F$ is prime to $T$, and $\mu^*(M) = \ord_R(F^*(0))$ in general.
For a finitely generated torsion $R[[\Gamma]]$-module, we also define $\mu(M)$ and $\mu^*(M)$ by using the isomorphism $R[[\Gamma]] \simeq R[[T]]$ in Lemma \ref{lem:gprg}.
\end{defn}

Now we state the main theorem of this section.
Claim (2) essentially suffices to prove Theorem \ref{thm:ICNF}, while claim (3) is necessary to prove Theorem \ref{thm:ICNF1}.

\begin{thm}\label{thm:alg}
Let $M$ be a finitely generated $R[[\Gamma]]$-module.

\begin{itemize}
\item[(1)]
We have
\[
\rank_R(M_{\Gamma^{p^n}}) = \rank_{R[[\Gamma]]}(M) p^n + \constant
\]
for $n \gg 0$.
\item[(2)]
If $M_{\Gamma}$ is finite,
then we have
\[
\length_R(M_{\Gamma^{p^n}}) = \mu(M) p^n + \constant
\]
for $n \gg 0$.
\item[(3)]
In general, we have
\[
\length_R((M_{\Gamma^{p^n}})_{\fin}) = \mu^*(M_{\tors}) p^n + \constant
\]
for $n \gg 0$.
\end{itemize}
\end{thm}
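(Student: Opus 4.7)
Lemma~\ref{lem:gprg} identifies $R[[\Gamma]]$ with $R[[T]]$ in such a way that the augmentation ideal of $\Gamma^{p^n}$ is $((1+T)^{p^n}-1)=(T^{p^n})$, using that $R$ has characteristic $p$. Thus $M_{\Gamma^{p^n}}=M/T^{p^n}M$, and all three claims become statements about the $R$-rank and $R$-length of these quotients. My plan is to reduce to elementary modules via the structure theorem, verify the asymptotics on each basic cyclic summand directly, and then transfer back to $M$ by a snake-lemma argument.

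By Weierstrass preparation together with the fact that $R[[T]]/(fg)$ is pseudo-isomorphic to $R[[T]]/(f)\oplus R[[T]]/(g)$ whenever $\gcd(f,g)=1$ in $R[[T]]$ (the cokernel $R[[T]]/(f,g)$ is then finite), every elementary module is pseudo-isomorphic to a direct sum of copies of $R[[T]]$, $R[[T]]/(\pi^m)$, $R[[T]]/(T^a)$, and $R[[T]]/(Q^m)$ with $Q$ a distinguished polynomial satisfying $Q(0)\neq 0$. I would then compute $(-)/T^{p^n}$ on each basic summand for $n\gg 0$: $R[[T]]/T^{p^n}$ is $R$-free of rank $p^n$ with no finite part; $(R/\pi^m)[T]/T^{p^n}$ is finite of $R$-length $mp^n$; once $p^n\geq a$, $R[[T]]/T^a$ is $R$-free of rank $a$ with no finite part; and for $Q$ distinguished with $Q(0)\neq 0$ the rank-$m\deg Q$ free $R$-module $R[[T]]/(Q^m)$ admits multiplication by $T$ as an $R$-endomorphism of determinant $\pm Q(0)^m$, so its quotient by $T^{p^n}$ is finite of $R$-length $p^n\cdot m\cdot\ord_R Q(0)$. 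Summing over the elementary divisors yields (1) and (3) for any elementary module $E$, with leading coefficients $\rank_{R[[T]]}(E)\cdot p^n$ and $\mu^*(E_{\tors})\cdot p^n$ respectively.

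To transfer to an arbitrary $M$, take a pseudo-isomorphism $\varphi\colon M\to E$ with finite kernel $K$ and cokernel $C$, and apply the snake lemma with multiplication by $T^{p^n}$ to the exact sequences $0\to K\to M\to \varphi(M)\to 0$ and $0\to \varphi(M)\to E\to C\to 0$. The crucial observation is that the finite $R[[T]]$-modules $K$ and $C$ are annihilated by $T^{p^n}$ for $n\gg 0$; from that $n$ onward the connecting homomorphisms involve ascending chains of submodules (or quotient images) of $K$ and $C$, which must stabilize because $K, C$ are finite. The resulting short exact sequences thus compare $M/T^{p^n}M$ with $E/T^{p^n}E$ up to a finite $R$-module of \emph{fixed} length, and combining them by the elementary DVR identity $\length_R A_{\tors}=\length_R X+\length_R B_{\tors}$ (valid for $0\to X\to A\to B\to 0$ with $X$ finite) transfers (3) from $E$ to $M$ with a fixed additive constant $\nu$. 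Part (1) follows because $R$-rank is insensitive to finite kernels and cokernels, and part (2) is the special case of (3) in which the hypothesis that $M_\Gamma$ is finite forces $\cha(M)$ to be prime to $T$ (so $\mu^*(M_{\tors})=\mu(M)$) and $M/T^{p^n}M$ to be itself finite (so $(M/T^{p^n}M)_{\fin}=M/T^{p^n}M$).

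The main difficulty will be the stabilization of the snake connecting homomorphisms, especially in the second sequence where $C$ is a quotient rather than a submodule; there one works with the ascending chain $\Imag(E[T^{p^n}]\to C)\subset C$, which stabilizes because $C$ is finite. Once this is in place, the remainder is routine bookkeeping of short exact sequences of finitely generated $R$-modules.
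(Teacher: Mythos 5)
Your overall route is the same as the paper's: identify $R[[\Gamma]]$ with $R[[T]]$ so that $M_{\Gamma^{p^n}}=M/T^{p^n}M$, compute lengths exactly on elementary modules via the determinant of multiplication by $f$ acting on the free module $R[[T]]/T^N$ (this is Lemma \ref{lem:alg0-0} and Proposition \ref{prop:alg_T_elem}), and then transfer across a pseudo-isomorphism by a snake-lemma argument (Lemma \ref{lem:alg_L} and Propositions \ref{prop:PI_const1}--\ref{prop:PI_const3}). Your computations on the basic cyclic summands, the deduction of (2) from (3), and the finite-\emph{kernel} half of the transfer are all correct: for $0\to X\to A\to B\to 0$ of finitely generated $R$-modules with $X$ of finite length one indeed has $\length_R(A_{\fin})=\length_R(X)+\length_R(B_{\fin})$, and the snake lemma produces such a sequence $0\to K/L\to M/T^N\to \varphi(M)/T^N\to 0$ with $K/L$ a fixed finite-length module for $N\gg 0$.

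The gap is in the finite-\emph{cokernel} half of the transfer for claim (3). After stabilizing the connecting homomorphism you obtain $0\to L'\to \varphi(M)/T^N\to E/T^N\to C\to 0$ with $L'$ and $C$ fixed; writing $Y_N$ for the image of the middle map, the sequence $0\to Y_N\to E/T^N\to C\to 0$ has the finite module as a \emph{quotient}, so your identity does not apply to it. What one gets instead is $\length_R((E/T^N)_{\fin})=\length_R((Y_N)_{\fin})+\length_R\bigl(\Imag((E/T^N)_{\fin}\to C)\bigr)$, and the last term, while bounded by $\length_R(C)$, must be shown to be \emph{constant} for $N\gg 0$: boundedness alone would only yield $\length_R((M/T^N)_{\fin})=\mu^* N+O(1)$ rather than $\mu^* N+\nu$. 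This is precisely the point the paper flags before Proposition \ref{prop:PI_const2} (constancy is ``a stronger statement than merely being bounded''), and the ``main difficulty'' you identify --- stabilization of $\Imag(E[T^{p^n}]\to C)$ --- is a different and easier issue, since that chain only controls $L'$. The gap is fixable in one line: the transition surjections $E/T^{N+1}\to E/T^N$ carry $(E/T^{N+1})_{\fin}$ into $(E/T^N)_{\fin}$ and commute with the maps to $C$, so the images $\Imag((E/T^N)_{\fin}\to C)$ form a descending chain in the finite-length module $C$ and hence stabilize. With that observation supplied, your bookkeeping closes and is somewhat more direct than the paper's comparison of the kernels of the transition maps $\pi_N$ in the proof of Proposition \ref{prop:PI_const2}.
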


For an $R[[T]]$-module $M$ and an integer $N \geq 0$, we write 
\[
M[T^N] = \{x \in M \mid T^N x = 0 \},
\quad
M/T^N = M/T^N M.
\]
We also write $M[T^\infty] = \bigcup_{N \geq 0} M[T^N]$.

By the isomorphism in Lemma \ref{lem:gprg}, we see that Theorem \ref{thm:alg} is a consequence of the following.

\begin{thm}\label{thm:alg_T}
Let $M$ be a finitely generated $R[[T]]$-module.
\begin{itemize}
\item[(1)]
We have
\[
\rank_R(M/T^N) = \rank_{R[[T]]}(M) N + \constant
\]
for $N \gg 0$.
Indeed, the constant is $\ord_T(F)$ with $\cha(M_{\tors}) = (F)$.
\item[(2)]
If $M/T$ is finite,
then we have
\[
\length_R(M/T^N) = \mu(M) N + \constant
\]
for $N \gg 0$.
\item[(3)]
In general, we have
\[
\length_R((M/T^N)_{\fin}) = \mu^*(M_{\tors}) N + \constant
\]
for $N \gg 0$.
\end{itemize}
\end{thm}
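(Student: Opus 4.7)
The plan is to reduce the theorem to the case of elementary modules via the structure theorem of Definition \ref{defn:str_thm}, compute each elementary summand directly using Weierstrass preparation in $R[[T]]$, and then transfer the formulas back to a general $M$ by exploiting that pseudo-null $R[[T]]$-modules have finite $R$-length and are annihilated by some power of $T$.

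For elementary modules, the free summand $R[[T]]^r$ gives $(R[[T]]/T^N)^r \cong R^{rN}$, which is $R$-free of rank $rN$ with trivial finite submodule. For a cyclic torsion summand $R[[T]]/(f)$ with $f \neq 0$, I would invoke Weierstrass preparation to write $f = \pi^\mu T^k P^* U$ with $U \in R[[T]]^\times$, $k = \ord_T(f)$, and $P^*$ either $1$ or a distinguished polynomial with $P^*(0) \neq 0$. For $N$ sufficiently large the ideal $(\pi^\mu T^k P^*)$ modulo $T^N$ sits inside the tail $T^k R[T]/T^N$, producing an $R$-module splitting
\[
R[[T]]/(f, T^N) \;\cong\; R[T]/T^k \;\oplus\; R[T]/(\pi^\mu P^*, T^{N-k}).
\]
The first summand is $R$-free of rank $k$ with vanishing $(\cdot)_{\fin}$. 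In the second, multiplication by $\pi^\mu P^*$ on $R[T]/T^{N-k} \cong R^{N-k}$ is an injective $R$-endomorphism with determinant $(\pi^\mu P^*(0))^{N-k}$ of $R$-valuation $(N-k)\mu^*(f)$, so by the Smith normal form principle its cokernel has $R$-length $(N-k)\mu^*(f)$. Summing over the elementary summands of $E$ then yields all three claimed formulas with explicit constants.

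For a general finitely generated $R[[T]]$-module $M$, the structure theorem provides a pseudo-isomorphism $\varphi \colon M \to E$ with pseudo-null kernel $K$ and cokernel $C$. Factoring through $I = \varphi(M)$, the four-term $\mathrm{Tor}$-sequences induced by $0 \to K \to M \to I \to 0$ and $0 \to I \to E \to C \to 0$ (applied via the resolution $R[[T]] \xrightarrow{T^N} R[[T]] \to R[[T]]/T^N$) produce, for $N$ sufficiently large, exact sequences whose kernels and cokernels are finite-length and encode the discrepancies between $M/T^N$, $I/T^N$, and $E/T^N$. The essential observation is that the relevant chains of submodules inside the finite-length modules $K$ and $C$ -- namely $\{T^N M \cap K\}_N \subset K$ governing the first sequence, and the image of $(E/T^N)_{\fin}$ in $C$ governing the second -- are descending chains in finite-length modules and hence eventually stabilize. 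Combined with the general identity $\length_R(B_{\fin}) = \length_R(A) + \length_R((B/A)_{\fin})$ for a finite-length submodule $A \hookrightarrow B$, this upgrades the bounded-error comparison to a stable constant discrepancy $\length_R((M/T^N)_{\fin}) - \length_R((E/T^N)_{\fin})$ for $N \gg 0$, giving (3). Part (1) follows the same way using additivity of $\rank_R$ in short exact sequences. Part (2) follows from (3) by first applying (1) to the hypothesis that $M/T$ is finite, which forces $\rank_{R[[T]]}(M) = \ord_T(F) = 0$ (so $F$ is prime to $T$ and $\mu(M) = \mu^*(M_{\tors})$), and then observing by induction on $N$ via the sequence $0 \to T^{N-1}M/T^N M \to M/T^N \to M/T^{N-1} \to 0$ that $M/T^N$ itself has finite $R$-length, so $(M/T^N)_{\fin} = M/T^N$.

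The main obstacle I expect is this stabilization step: the Tor sequences a priori only give that $\length_R((M/T^N)_{\fin}) - \length_R((E/T^N)_{\fin})$ is bounded in $N$, and upgrading this to an eventually constant relation requires identifying the precise finite-length submodules of $K$ and $C$ that govern the error and invoking Noetherianity of those ambient finite-length modules to force the chains to stabilize.
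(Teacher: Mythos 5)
Your proposal is correct and follows essentially the same route as the paper: reduce to elementary modules via the structure theorem, compute $\length_R(R[[T]]/(f,T^N))$ as the $R$-valuation of the determinant $f(0)^N$ (resp.\ $f^*(0)^{N-\ord_T(f)}$ after splitting off the $T$-power), and show that a pseudo-isomorphism changes the quantities only by an eventually constant amount via stabilization of finite-length error terms. The paper organizes that last step slightly differently --- through an auxiliary module $L=\Cok(M[T^\infty]\to M''[T^\infty])$, the successive surjections $M/T^{N+1}\to M/T^N$, and a separate reduction from $M$ to $M_{\tors}$ using the reflexive hull --- but your tracking of the descending chains $\{K\cap T^N M\}$ in $K$ and of the image of $(E/T^N)_{\fin}$ in $C$ achieves the same stabilization.
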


First we show claim (1):

\begin{proof}[Proof of Theorem \ref{thm:alg_T}(1)]
We take a pseudo-isomorphism $\varphi$ as in Definition \ref{defn:str_thm}.
For any $N \geq 0$, we obtain
\begin{align*}
\rank_R(M/T^N)
& = \rank_R \left((R[[T]]/T^N) ^r \oplus \bigoplus_{i = 1}^s R[[T]]/(f_i, T^N) \right)\\
& = r N + \sum_{i = 1}^s \min \{\ord_T(f_i), N\}.
\end{align*}
The final term is constant as long as $N \geq \ord_T(f_i)$ for any $i$.
In fact, for such an $N$, we have
\[
\sum_{i = 1}^s \min \{\ord_T(f_i), N\}
= \sum_{i = 1}^s \ord_T(f_i)
= \ord_T(F),
\]
so the claim holds.
\end{proof}

The rest of this section is devoted to the proof of Theorem \ref{thm:alg_T}(2)(3).

\begin{lem}\label{lem:alg0-0}
Let $f \in R[[T]] \setminus \{0\}$.
\begin{itemize}
\item[(1)]
If $f(0) \neq 0$, we have
\[
\length_R (R[[T]]/(f, T^N)) = \ord_R(f(0)) N
\]
for any $N \geq 0$
\item[(2)]
In general, we have
\[
\length_R (R[[T]]/(f, T^N))_{\fin} = \ord_R(f^*(0)) (N - \ord_T(f))
\]
for any $N \geq \ord_T(f)$.
\end{itemize}
\end{lem}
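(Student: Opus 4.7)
The plan is to reduce both parts to a computation on the finite $R$-module $R[[T]]/T^N$, using the identification
\[
R[[T]]/(f, T^N) \;\cong\; \Coker\bigl( f \cdot : R[[T]]/T^N \to R[[T]]/T^N \bigr),
\]
and the fact that $R[[T]]/T^N$ is $R$-free of rank $N$ with basis $1, T, \dots, T^{N-1}$.

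For part (1), I would write out the matrix of multiplication by $f = \sum_{i \geq 0} a_i T^i$ on $R[[T]]/T^N$ in the above basis: it is lower triangular with $f(0) = a_0$ along the diagonal. Since $f(0) \neq 0$ by assumption, this endomorphism is injective with determinant $f(0)^N$. A standard Smith-normal-form argument then gives that the length of its cokernel equals the $R$-valuation of its determinant, namely $N \cdot \ord_R(f(0))$, as desired.

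For part (2), write $f = T^a f^*$ with $a = \ord_T(f)$ and $f^*(0) \neq 0$, and assume $N \geq a$. The key step will be to produce a short exact sequence
\[
0 \to R[[T]]/(f^*, T^{N-a}) \xrightarrow{\,\cdot T^a\,} R[[T]]/(f, T^N) \twoheadrightarrow R[[T]]/T^a \to 0.
\]
Granting this, part (1) shows that the left-hand term has finite $R$-length $(N-a)\ord_R(f^*(0))$, while the right-hand term is $R$-free of rank $a$ and hence contains no nonzero submodule of finite length. Any element of finite $R$-length in the middle module must therefore map to $0$ on the right, so the image of the left map is exactly $(R[[T]]/(f, T^N))_{\fin}$, yielding the claimed formula.

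The main obstacle is verifying exactness of this sequence. The right surjection is clear since both $f = T^a f^*$ and $T^N$ lie in $(T^a)$, and its kernel is $T^a R[[T]]/(T^a f^* R[[T]] + T^N R[[T]])$. Dividing through by $T^a$---which is legal because $T^a$ is a non-zero-divisor in $R[[T]]$---identifies this kernel with $R[[T]]/(f^*, T^{N-a})$: concretely, $T^a g \in T^a f^* R[[T]] + T^N R[[T]]$ forces $g - h f^* = k T^{N-a}$ for some $h, k \in R[[T]]$, so $g \in (f^*, T^{N-a})$, and the reverse inclusion is immediate.
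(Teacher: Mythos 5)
Your proposal is correct and follows essentially the same route as the paper: part (1) is the identical triangular-matrix/determinant computation, and your short exact sequence in part (2) is exactly the paper's identification $(R[[T]]/(f,T^N))_{\fin} = (T^{\ord_T(f)})/(f,T^N) \simeq R[[T]]/(f^*,T^{N-\ord_T(f)})$, with the exactness and the torsion-freeness of $R[[T]]/T^a$ spelled out in more detail than the paper bothers to.
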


\begin{proof}
(1)
Let $f(T) = a_0 + a_1 T + a_2 T^2 + \cdots$ with $a_0, a_1, a_2, \dots \in R$.
The $R$-module $R[[T]]/(T^N)$ is free of rank $N$ with a basis $1, T, T^2, \dots, T^{N-1}$.
With respect to this basis, the $R$-endomorphism given by the multiplication by $f$ is presented by a triangular matrix 
\[
\begin{pmatrix}
	a_0 & 0 & \cdots & 0 & 0\\
	a_1 & a_0 & \cdots & 0 & 0\\
	\vdots & \vdots & \ddots & \vdots & \vdots\\
	a_{N-2} & a_{N-3} & \cdots & a_0 & 0\\
	a_{N-1} & a_{N-2} & \cdots & a_1 & a_0
\end{pmatrix}.
\]
The determinant of this matrix is $a_0^N$, so we have
\[
\length_R(R[[T]]/(f, T^N)) 
= \ord_R(a_0^N)
= \ord_R(a_0) N.
\]
Since $a_0 = f(0)$, the formula holds.

(2)
For $N \geq \ord_T(f)$, we have
\[
(R[[T]]/(f, T^N))_{\fin}
= (T^{\ord_T(f)})/(f, T^N)
\simeq R[[T]]/(f^*, T^{N - \ord_T(f)}).
\]
Now the claim follows from (1).
\end{proof}

We show Theorem \ref{thm:alg_T}(2)(3) for elementary modules:

\begin{prop}\label{prop:alg_T_elem}
Let $E$ be an $R[[T]]$-module of the form $E = R[[T]]^r \oplus \bigoplus_{i = 1}^s R[[T]]/(f_i)$.
Set $F = f_1 \cdots f_s$.
\begin{itemize}
\item[(1)]
Suppose that $r = 0$ and $F(0) \neq 0$.
Then we have 
\[
\length_R(E/T^N) = \ord_R(F(0)) N
\]
for any $N \geq 0$.
\item[(2)]
In general, we have 
\[
\length_R((E/T^N)_{\fin}) = \ord_R(F^*(0)) N + \constant
\]
for $N \gg 0$.
\end{itemize}
\end{prop}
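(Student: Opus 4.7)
The plan is to reduce both statements to the single-cyclic case handled by Lemma \ref{lem:alg0-0}, exploiting that quotienting by $T^N$ commutes with direct sums.

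First I would write
\[
E/T^N \;\simeq\; (R[[T]]/T^N)^r \;\oplus\; \bigoplus_{i=1}^s R[[T]]/(f_i, T^N).
\]
For part (1), the hypothesis $r = 0$ kills the free part, and the hypothesis $F(0) \ne 0$ forces $f_i(0) \ne 0$ for every $i$ (since $F(0) = \prod_i f_i(0)$). Then Lemma \ref{lem:alg0-0}(1) applies to each summand individually, giving
\[
\length_R(E/T^N) \;=\; \sum_{i=1}^s \ord_R(f_i(0))\, N \;=\; \ord_R(F(0))\, N,
\]
valid for all $N \ge 0$, which is exactly the claim.

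For part (2), the key observation is that $(R[[T]]/T^N)^r$ is $R$-free (of rank $rN$), so its maximal finite-length $R$-submodule is zero; hence the free summands contribute nothing to $(E/T^N)_{\fin}$. Since passage to the maximal finite-length $R$-submodule commutes with finite direct sums, we get
\[
(E/T^N)_{\fin} \;\simeq\; \bigoplus_{i=1}^s \bigl(R[[T]]/(f_i, T^N)\bigr)_{\fin}.
\]
For every $N$ with $N \ge \max_i \ord_T(f_i)$, Lemma \ref{lem:alg0-0}(2) gives the length of the $i$-th summand as $\ord_R(f_i^*(0))\,(N - \ord_T(f_i))$. Summing and using $F^* = \prod_i f_i^*$, hence $\ord_R(F^*(0)) = \sum_i \ord_R(f_i^*(0))$, yields
\[
\length_R\bigl((E/T^N)_{\fin}\bigr) \;=\; \ord_R(F^*(0))\, N \;-\; \sum_{i=1}^s \ord_R(f_i^*(0))\,\ord_T(f_i),
\]
which is of the required form $\ord_R(F^*(0))\, N + \constant$ for $N \gg 0$.

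There is no real obstacle: the only thing to be slightly careful about is the compatibility of the decomposition with $(-)_{\fin}$ and checking that the free-part summand really has trivial finite part (which uses that $R$ is a discrete valuation ring, so a free $R$-module has no nonzero $R$-torsion submodule). Once these are in place, the entire argument is a straightforward bookkeeping built on Lemma \ref{lem:alg0-0}.
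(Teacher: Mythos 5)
Your proposal is correct and follows essentially the same route as the paper: decompose $E/T^N$ into its free and cyclic summands, note that $(R[[T]]/T^N)^r$ contributes nothing to the finite part, and apply Lemma \ref{lem:alg0-0} to each cyclic factor before summing via $\ord_R(F(0)) = \sum_i \ord_R(f_i(0))$ and $\ord_R(F^*(0)) = \sum_i \ord_R(f_i^*(0))$. The paper's proof is the same bookkeeping, with the same explicit constant and the same observation that $N \geq \max_i \ord_T(f_i)$ suffices.
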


\begin{proof}
(1)
By Lemma \ref{lem:alg0-0}(1), we have
\begin{align*}
\length_R(E/T^N)
& = \sum_{i=1}^s \length_R(R[[T]]/(f_i, T^N))\\
& = \sum_{i=1}^s \ord_R(f_i(0)) N\\
& = \ord_R(F(0)) N
\end{align*}
as claimed.

(2)
By Lemma \ref{lem:alg0-0}(2), we have
\begin{align*}
\length_R((E/T^N)_{\fin})
& = r \length_R((R[[T]]/T^N)_{\fin}) + \sum_{i=1}^s \length_R((R[[T]]/(f_i, T^N))_{\fin})\\
& = \sum_{i=1}^s \ord_R(f_i^*(0)) (N - \ord_T(f_i))\\
& = \ord_R(F^*(0)) N - \sum_{i=1}^s \ord_R(f_i^*(0)) \ord_T(f_i)
\end{align*}
for $N \gg 0$ (indeed, $N \geq \ord_T(f_i)$ for any $i$ suffices), as claimed.
\end{proof}

To complete the proof of Theorem \ref{thm:alg_T}(2)(3), we show that the pseudo-isomorphism does not affect the lengths up to constants.
This is a stronger statement than merely being bounded as $N \to \infty$, which is much easier.

\begin{lem}\label{lem:alg_L}
Let $0 \to M' \to M \to M'' \to 0$ be an exact sequence of finitely generated $R[[T]]$-modules.
Let $L$ be the cokernel of the induced map $M[T^\infty] \to M''[T^\infty]$.
\begin{itemize}
\item[(1)]
We have an exact sequence
\[
0 \to L \to M'/T^N \to M/T^N \to M''/T^N \to 0
\]
for $N \gg 0$.
\item[(2)]
The sequences in (1) fit into commutative diagrams
\[
\xymatrix{
	0 \ar[r]
	& L \ar[r] \ar[d]_{T \times}
	& M'/T^{N+1} \ar[r] \ar@{->>}[d]
	& M/T^{N+1} \ar[r] \ar@{->>}[d]
	& M''/T^{N+1} \ar[r] \ar@{->>}[d]
	& 0\\
	0 \ar[r]
	& L \ar[r]
	& M'/T^N \ar[r]
	& M/T^N \ar[r]
	& M''/T^N \ar[r]
	& 0
}
\]
for $N \gg 0$, where the right three vertical arrows are the natural surjective maps.
\item[(3)]
The sequences in (1) fit into commutative diagrams
\[
\xymatrix{
	0 \ar[r]
	& L \ar[r]
	& M'/T^{N+1} \ar[r]
	& M/T^{N+1} \ar[r]
	& M''/T^{N+1} \ar[r]
	& 0\\
	0 \ar[r]
	& L \ar[r] \ar@{=}[u]
	& M'/T^N \ar[r] \ar[u]_{T \times}
	& M/T^N \ar[r] \ar[u]_{T \times}
	& M''/T^N \ar[r] \ar[u]_{T \times}
	& 0
}
\]
for $N \gg 0$.
\end{itemize}
\end{lem}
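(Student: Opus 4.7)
My plan is to deduce everything from the snake lemma applied to multiplication by $T^N$ on the short exact sequence. Specifically, the commutative diagram
\[
\xymatrix{
0 \ar[r] & M' \ar[r] \ar[d]^{T^N} & M \ar[r] \ar[d]^{T^N} & M'' \ar[r] \ar[d]^{T^N} & 0\\
0 \ar[r] & M' \ar[r] & M \ar[r] & M'' \ar[r] & 0
}
\]
yields the six-term exact sequence
\[
0 \to M'[T^N] \to M[T^N] \to M''[T^N] \xrightarrow{\delta_N} M'/T^N \to M/T^N \to M''/T^N \to 0,
\]
where $\delta_N$ is given explicitly by: lift $x \in M''[T^N]$ to $y \in M$, then $\delta_N(x) = [T^N y] \in M'/T^N$.

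Next, since $R[[T]]$ is Noetherian, both $M[T^\infty]$ and $M''[T^\infty]$ are finitely generated $R[[T]]$-modules and are therefore killed by a common power of $T$; call it $T^{N_0}$. For every $N \geq N_0$, the inclusions $M[T^N] = M[T^\infty]$ and $M''[T^N] = M''[T^\infty]$ hold. Consequently, the kernel of $M'/T^N \to M/T^N$ coincides with the cokernel of $M[T^\infty] \to M''[T^\infty]$, which is exactly $L$. This proves part (1) and identifies the embedding $\iota_N: L \hookrightarrow M'/T^N$ as the factorization of $\delta_N$ through its image.

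For parts (2) and (3), the right three squares in each diagram are evidently commutative because the horizontal maps are the natural ones induced by the exact sequence, and the vertical maps are either the standard projections $M/T^{N+1} \twoheadrightarrow M/T^N$ (part (2)) or multiplication by $T$ (part (3)), both of which commute with the structure maps. The only nontrivial check is the leftmost square in each case, and both reduce to the single identity
\[
T^{N+1}y = T \cdot T^N y
\]
at the level of representatives. Concretely, for $\ell = [x] \in L$ with lift $y \in M$ of $x \in M''[T^\infty]$:
for part (2), going right-then-down sends $\ell$ to $[T^{N+1}y] \in M'/T^N$, while going down-then-right sends $\ell$ to $\iota_N(T\ell) = [T^N(Ty)] = [T^{N+1}y]$;
for part (3), going right-then-up sends $\ell$ to $T \cdot [T^N y] = [T^{N+1}y] \in M'/T^{N+1}$, while going up-then-right sends $\ell$ to $\iota_{N+1}(\ell) = [T^{N+1}y]$.
(One must also verify that $T \cdot L \subseteq L$ inside $M'/T^N$, which holds since $T \cdot \delta_N(x) = \delta_N(Tx)$.)

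The main potential pitfall is keeping the direction of the transition maps straight and confirming that the explicit formula for $\delta_N$ behaves well under the projections $M'/T^{N+1} \twoheadrightarrow M'/T^N$ and the lifts $M'/T^N \xrightarrow{T\cdot} M'/T^{N+1}$; once the formula $\delta_N(x) = [T^N y]$ is in hand, however, every square commutes by a one-line computation, so the argument is essentially bookkeeping.
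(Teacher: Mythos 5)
Your proposal is correct and follows the same route as the paper: the snake lemma applied to multiplication by $T^N$ on the short exact sequence, stabilization of $M[T^N]$ and $M''[T^N]$ at $M[T^\infty]$ and $M''[T^\infty]$ by noetherianity, and then direct verification of the diagrams. The paper simply states that (2) and (3) "follow from the construction," whereas you carry out the explicit check with the formula $\delta_N(x)=[T^N y]$; this is just a more detailed write-up of the same argument.
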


\begin{proof}
(1)
For any $N \geq 0$, by the snake lemma we have an exact sequence
\[
0 \to M'[T^N] \to M[T^N] \to M''[T^N]
\to M'/T^N \to M/T^N \to M''/T^N \to 0.
\]
Since both $M$ and $M''$ are noetherian, we have $M[T^N] = M[T^\infty]$ and $M''[T^N] = M''[T^\infty]$ for $N \gg 0$.
This shows the exact sequence.

(2)(3)
The commutativity of the diagrams follows from the construction.
\end{proof}

\begin{prop}\label{prop:PI_const1}
Let $\varphi: M \to E$ be a pseudo-isomorphism between finitely generated torsion $R[[T]]$-modules.
Suppose that the characteristic ideals are prime to $T$.
Then 
\[
\length_R(M/T^N) = \length_R(E/T^N) + \constant
\]
for $N \gg 0$.
\end{prop}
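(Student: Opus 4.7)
The plan is to factor the pseudo-isomorphism as a surjection followed by an injection and then analyze each half using Lemma \ref{lem:alg_L}. Set
\[
K = \Ker(\varphi), \qquad I = \Imag(\varphi), \qquad C = \Cok(\varphi),
\]
so $K$ and $C$ are pseudo-null, i.e.\ of finite length over $R[[T]]$. Any such module is annihilated by a power of the maximal ideal $(\pi, T)$ of $R[[T]]$, hence by $T^{N_0}$ for some $N_0 \geq 0$; in particular $K/T^N = K$ and $C/T^N = C$ for all $N \geq N_0$, and both have constant finite $R$-length.

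First I would apply Lemma \ref{lem:alg_L}(1) to $0 \to K \to M \to I \to 0$ and $0 \to I \to E \to C \to 0$, obtaining, for $N \gg 0$, four-term exact sequences
\[
0 \to L_1 \to K/T^N \to M/T^N \to I/T^N \to 0,
\]
\[
0 \to L_2 \to I/T^N \to E/T^N \to C/T^N \to 0,
\]
where $L_1 = \Cok(M[T^\infty] \to I[T^\infty])$ and $L_2 = \Cok(I[T^\infty] \to E[T^\infty])$. Adding up the alternating sums of $R$-lengths and using $K/T^N = K$, $C/T^N = C$ for $N$ large, I would obtain
\[
\length_R(M/T^N) = \length_R(I/T^N) + \length_R(K) - \length_R(L_1),
\]
\[
\length_R(E/T^N) = \length_R(I/T^N) - \length_R(L_2) + \length_R(C),
\]
from which subtracting gives the claim, provided $L_1$ and $L_2$ are eventually constant (i.e.\ of finite length and independent of $N$ for $N \gg 0$).

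The main obstacle, and the only place the hypothesis on characteristic ideals enters, is to show that $M[T^\infty]$, $I[T^\infty]$, and $E[T^\infty]$ are all pseudo-null; once this is known, each is a noetherian module equal to $M[T^N]$ etc.\ for $N$ large, so $L_1$ and $L_2$ stabilize. To see pseudo-nullity, I would argue via the structure theorem: since $\cha(M_{\tors}) = \cha(M)$ and $\cha(E)$ are prime to $T$, the elementary modules pseudo-isomorphic to $M$ and $E$ are direct sums of $R[[T]]/(f_i)$ with $f_i$ prime to $T$, and on such a summand multiplication by $T$ is injective (if $Tg \in (f_i)$ then $f_i \mid g$ since $(f_i, T)$ has height $2$ in $R[[T]]$). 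Hence the elementary modules are $T$-torsion-free, so $M[T^\infty]$ and $E[T^\infty]$ are contained in the kernel/cokernel of the pseudo-isomorphism to elementary modules and thus pseudo-null; the same holds for the submodule $I$ of $E$.

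Combining the two displayed length identities and the fact that $\length_R(K), \length_R(C), \length_R(L_1), \length_R(L_2)$ are all constant for $N \gg 0$ yields
\[
\length_R(M/T^N) - \length_R(E/T^N) = \constant
\]
for $N \gg 0$, as desired.
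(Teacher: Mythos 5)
Your argument is correct and follows essentially the same route as the paper: both split the pseudo-isomorphism into the two short exact sequences through $I=\Imag\varphi$ and apply Lemma \ref{lem:alg_L}(1) to each, with the hypothesis on characteristic ideals ensuring that all the lengths involved (in particular the correction terms) are finite constants. One harmless slip: for the sequence $0 \to I \to E \to C \to 0$, Lemma \ref{lem:alg_L} defines the correction term as $\Cok(E[T^\infty] \to C[T^\infty])$, not $\Cok(I[T^\infty] \to E[T^\infty])$; either is a fixed module of finite length, so your bookkeeping and conclusion are unaffected.
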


\begin{proof}
By dividing the exact sequence $0 \to \Ker \varphi \to M \overset{\varphi}{\to} E \to \Cok \varphi \to 0$ into two short exact sequences, it is enough to show the following.
Let $0 \to M' \to M \to M'' \to 0$ be an exact sequence of finitely generated torsion $R[[T]]$-modules whose characteristic ideals are prime to $T$.
Then the following are true for $N \gg 0$:
\begin{itemize}
\item[(a)]
If $M'$ is pseudo-null, then $\length_R(M/T^N) = \length_R(M''/T^N) + \constant$.
\item[(b)]
If $M''$ is pseudo-null, then $\length_R(M/T^N) = \length_R(M'/T^N) + \constant$.
\end{itemize}
To show these, we define $L$ as in Lemma \ref{lem:alg_L}.
Then Lemma \ref{lem:alg_L}(1) implies
\[
\length_R(M/T^N) = \length_R(M'/T^N) + \length_R(M''/T^N) - \length_R(L)
\]
for $N \gg 0$.
In case (a), $\length_R(M'/T^N)$ is constant as $N \to \infty$.
In case (b), $\length_R(M''/T^N)$ is constant as $N \to \infty$.
Therefore, both claims hold.
\end{proof}

We are already able to prove claim (2) of Theorem \ref{thm:alg_T}, but claim (3) requires two more propositions.

\begin{prop}\label{prop:PI_const2}
Let $\varphi: M \to E$ be a pseudo-isomorphism between finitely generated torsion $R[[T]]$-modules.
Then 
\[
\length_R((M/T^N)_{\fin}) = \length_R((E/T^N)_{\fin}) + \constant
\]
for $N \gg 0$.
\end{prop}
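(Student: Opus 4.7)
The plan is to follow the strategy of Proposition~\ref{prop:PI_const1}: after factoring $\varphi$ through its image, it suffices to prove the analogous formula for a short exact sequence $0 \to M' \to M \to M'' \to 0$ in the two cases (a) $M'$ is pseudo-null, and (b) $M''$ is pseudo-null. In both cases the relevant pseudo-null module has finite $R$-length.

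The basic tool I will use is the elementary observation that for any short exact sequence $0 \to A \to B \to C \to 0$ of finitely generated $R[[T]]$-modules with $A$ finite, the induced sequence $0 \to A \to B_{\fin} \to C_{\fin} \to 0$ is exact. Indeed, $A \subseteq B_{\fin}$ is clear, and for any $c \in C_{\fin}$ the preimage in $B$ of the finite submodule $R[[T]] \cdot c$ is an extension of two finite modules, hence finite, so $c$ lifts into $B_{\fin}$.

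In case (a), $M'/T^N$ is finite, so the image $K_N$ of $M'/T^N \to M/T^N$ is finite; applying the tool to $0 \to K_N \to M/T^N \to M''/T^N \to 0$ gives
\[
\length_R((M/T^N)_{\fin}) = \length_R(K_N) + \length_R((M''/T^N)_{\fin}).
\]
Since $M'/T^N$ and the module $L$ from Lemma~\ref{lem:alg_L}(1) both stabilize for $N \gg 0$, $\length_R(K_N)$ is eventually constant, which finishes case (a).

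In case (b), $M''$ is finite; hence so is $L$ (being a quotient of $M''[T^\infty] \subseteq M''$), and the quotient $\overline{M}'' := M''/T^N M''$ is eventually constant (as $T^N M''$ stabilizes on the finite-length module $M''$). I would split the four-term sequence of Lemma~\ref{lem:alg_L}(1) into $0 \to L \to M'/T^N \to K_N \to 0$ and $0 \to K_N \to M/T^N \to \overline{M}'' \to 0$. The tool applied to the first yields $\length_R((K_N)_{\fin}) = \length_R((M'/T^N)_{\fin}) - \length_R(L)$; for the second, $(K_N)_{\fin} = K_N \cap (M/T^N)_{\fin}$ produces an injection $I_N := (M/T^N)_{\fin}/(K_N)_{\fin} \hookrightarrow \overline{M}''$, whence $\length_R((M/T^N)_{\fin}) = \length_R((K_N)_{\fin}) + \length_R(I_N)$. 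The hard part will be showing $\length_R(I_N)$ is eventually constant: using the compatibility diagrams of Lemma~\ref{lem:alg_L}(2), the surjection $M/T^{N+1} \twoheadrightarrow M/T^N$ sends $(M/T^{N+1})_{\fin}$ into $(M/T^N)_{\fin}$, while for $N \gg 0$ the corresponding map $M''/T^{N+1} \twoheadrightarrow M''/T^N$ becomes the identity on $\overline{M}''$; chasing the resulting commutative square yields $I_{N+1} \subseteq I_N$ as submodules of the finite module $\overline{M}''$, and this decreasing chain must stabilize. The compatibility in Lemma~\ref{lem:alg_L}(2) is precisely what upgrades the a priori boundedness of $\length_R(I_N)$ to eventual stability, which is the essential new point absent in Proposition~\ref{prop:PI_const1}.
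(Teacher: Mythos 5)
Your argument is correct, and it reaches the conclusion by a route that differs from the paper's in the essential step. The paper first converts the assertion into one about successive differences: it shows $\length_R((M/T^{N+1})_{\fin}) - \length_R((M/T^N)_{\fin}) = \length_R(\Ker \pi_N)$ for the transition maps $\pi_N: M/T^{N+1} \to M/T^N$ (using that $\Ker\pi_N$ is finite for $N \gg 0$), and then compares $\Ker\pi_N$, $\Ker\pi'_N$, $\Ker\pi''_N$ by applying the snake lemma to the ladder of Lemma \ref{lem:alg_L}(2); in case (b) the identity $\length_R(L[T]) = \length_R(L/T)$ for the finite module $L$ does the work. You instead keep the absolute quantities and control the error terms directly: your ``tool'' (exactness of $(-)_{\fin}$ on short exact sequences with finite subobject) disposes of case (a) and of half of case (b), and the genuinely new point --- which the paper handles by the snake lemma --- is your observation that the defect $I_N = (M/T^N)_{\fin}/(K_N)_{\fin}$ embeds into the eventually constant finite module $\overline{M}''$ compatibly with the transition maps, so that $I_{N+1} \subseteq I_N$ is a decreasing chain of submodules of a finite module and must stabilize. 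Both proofs lean on Lemma \ref{lem:alg_L}(2) at exactly this point, and for the same reason: mere boundedness of the error term is easy, and it is the compatibility of the transition maps that upgrades boundedness to eventual constancy. Your version is arguably more transparent about where each constant comes from ($\length_R(K_N)$, $\length_R(L)$, $\length_R(I_N)$), while the paper's telescoping formulation is slightly shorter and reuses the same snake-lemma mechanism verbatim in both cases (a) and (b).
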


\begin{proof}
As in the proof of Proposition \ref{prop:PI_const1}, it is enough to show the following.
Let $0 \to M' \to M \to M'' \to 0$ be an exact sequence of finitely generated torsion $R[[T]]$-modules.
Then the following are true for $N \gg 0$:
\begin{itemize}
\item[(a)]
If $M'$ is pseudo-null, then $\length_R((M/T^N)_{\fin}) = \length_R((M''/T^N)_{\fin}) + \constant$.
\item[(b)]
If $M''$ is pseudo-null, then $\length_R((M/T^N)_{\fin}) = \length_R((M'/T^N)_{\fin}) + \constant$.
\end{itemize}
Let us write $\pi_N: M/T^{N+1} \to M/T^N$ for the natural surjective map, so we have an exact sequence
\[
0 \to \Ker \pi_N \to M/T^{N+1} \overset{\pi_N}{\to} M/T^N \to 0.
\]
Since $M$ is torsion, the $R$-rank of $M/T^N$ stabilizes (see Theorem \ref{thm:alg_T}(1)), so $\Ker \pi_N$ is of finite length for $N \gg 0$.
Therefore, we have an exact sequence
\[
0 \to \Ker \pi_N \to (M/T^{N+1})_{\fin} \overset{\pi_N}{\to} (M/T^N)_{\fin} \to 0,
\]
which implies
\[
\length_R((M/T^{N+1})_{\fin}) - \length_R((M/T^N)_{\fin}) = \length_R(\Ker \pi_N)
\]
for $N \gg 0$.
Similar formulas hold for $M'$ and $M''$, concerning $\pi'_N: M'/T^{N+1} \to M'/T^N$ and $\pi''_N: M''/T^{N+1} \to M''/T^N$.
Therefore, claims (a) and (b) can be restated as follows (for $N \gg 0$):
\begin{itemize}
\item[(a')]
If $M'$ is pseudo-null, then $\length_R(\Ker \pi_N) = \length_R(\Ker \pi_N'')$.
\item[(b')]
If $M''$ is pseudo-null, then $\length_R(\Ker \pi_N) = \length_R(\Ker \pi_N')$.
\end{itemize}
To show these, we define $L$ as in Lemma \ref{lem:alg_L}.

(a')
Suppose $M'$ is pseudo-null.
Note that Lemma \ref{lem:alg_L}(1) implies that $L$ is of finite length.
For $N \gg 0$, the map $\pi'_N$ is isomorphic, so Lemma \ref{lem:alg_L}(2) implies
\[
L[T] = 0,
\quad
0 \to L/T \to \Ker \pi_N \to \Ker \pi''_N \to 0.
\]
Since $L$ is of finite length, $L[T] = 0$ implies $L/T = 0$.
Therefore, we obtain $\Ker \pi_N \simeq \Ker \pi''_N$, so the claim holds.

(b')
Suppose $M''$ is pseudo-null.
Note that the definition of $L$ implies that $L$ is of finite length.
For $N \gg 0$, the map $\pi_N''$ is isomorphic, so Lemma \ref{lem:alg_L}(2) implies
\[
0 \to L[T] \to \Ker \pi_N' \to \Ker \pi_N \to L/T \to 0.
\]
This shows the claim.
\end{proof}

\begin{prop}\label{prop:PI_const3}
Let $M$ be a finitely generated $R[[T]]$-module.
We set $M_{\tors}$ as the torsion part of $M$.
Then 
\[
\length_R((M_{\tors}/T^N)_{\fin}) = \length_R((M/T^N)_{\fin}) + \constant
\]
for $N \gg 0$.
\end{prop}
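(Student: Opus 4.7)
The plan is to reduce to elementary modules via the structure theorem and then observe that the free summand contributes nothing to the finite part. Concretely, take a pseudo-isomorphism $\varphi: M \to E$ as in Definition \ref{defn:str_thm}, where $E = R[[T]]^r \oplus E'$ with $r = \rank_{R[[T]]}(M)$ and $E' = \bigoplus_{i=1}^s R[[T]]/(f_i)$ is the torsion part of $E$. Since $\varphi(M_{\tors})$ is $R[[T]]$-torsion and $E/E' \cong R[[T]]^r$ is torsion-free, $\varphi$ restricts to a map $\varphi': M_{\tors} \to E'$, and the induced map $\bar\varphi: F := M/M_{\tors} \to R[[T]]^r$ fits into the obvious three-row diagram.

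The key technical claim is that $\varphi'$ is also a pseudo-isomorphism. To see this, apply the snake lemma to the diagram
\[
\xymatrix{
0 \ar[r] & M_{\tors} \ar[r] \ar[d]^{\varphi'} & M \ar[r] \ar[d]^{\varphi} & F \ar[r] \ar[d]^{\bar\varphi} & 0 \\
0 \ar[r] & E' \ar[r] & E \ar[r] & R[[T]]^r \ar[r] & 0.
}
\]
The cokernel of $\bar\varphi$ is a quotient of $\cok(\varphi)$, hence pseudo-null; since both $F$ and $R[[T]]^r$ have rank $r$, this forces $\ker\bar\varphi$ to be a torsion submodule of the torsion-free module $F$, so $\ker\bar\varphi = 0$. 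The snake sequence then gives $\ker\varphi' \subseteq \ker\varphi$ and $\cok\varphi' \hookrightarrow \cok\varphi$, both pseudo-null, as required. With $\varphi'$ in hand, applying Proposition \ref{prop:PI_const2} to both $\varphi$ and $\varphi'$ yields
\[
\length_R((M/T^N)_{\fin}) = \length_R((E/T^N)_{\fin}) + \constant, \qquad \length_R((M_{\tors}/T^N)_{\fin}) = \length_R((E'/T^N)_{\fin}) + \constant
\]
for $N \gg 0$.

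Finally, the reduction is completed by noting that $E/T^N = (R[[T]]/T^N)^r \oplus E'/T^N$, and the summand $(R[[T]]/T^N)^r$ is $R$-free (of rank $rN$), hence contributes nothing to the finite part. Therefore $(E/T^N)_{\fin} = (E'/T^N)_{\fin}$, and combining the two displays above gives the desired equality up to a constant. The only place requiring real care is verifying that $\varphi'$ is a pseudo-isomorphism, which is exactly what the snake-lemma computation above delivers; everything else is either routine or already packaged in Proposition \ref{prop:PI_const2}.
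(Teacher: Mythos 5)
Your identification of the restricted map $\varphi': M_{\tors} \to E'$ and the snake-lemma verification that it is a pseudo-isomorphism are both correct, and the observation $(E/T^N)_{\fin} = (E'/T^N)_{\fin}$ is fine. The gap is the step where you apply Proposition \ref{prop:PI_const2} to $\varphi: M \to E$ itself: that proposition is stated, and proved, only for pseudo-isomorphisms between finitely generated \emph{torsion} $R[[T]]$-modules, whereas $M$ and $E$ here have rank $r>0$ in general. The torsion hypothesis is not cosmetic. The proof of Proposition \ref{prop:PI_const2} rests on the exact sequence $0 \to \Ker \pi_N \to (M/T^{N+1})_{\fin} \to (M/T^N)_{\fin} \to 0$ for the natural surjection $\pi_N: M/T^{N+1} \to M/T^N$, and this requires $\Ker \pi_N = T^N M/T^{N+1}M$ to be of finite length; when $\rank_{R[[T]]}(M) = r > 0$ this kernel has $R$-rank $r$, so it is not contained in the finite part and the sequence fails. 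More pointedly, the assertion you invoke --- that a pseudo-isomorphism between possibly non-torsion modules changes $\length_R((-/T^N)_{\fin})$ only by an eventually constant amount --- is, given the legitimate torsion case and your step on $\varphi'$, essentially equivalent to Proposition \ref{prop:PI_const3} itself. So the argument is circular at exactly the point where the new content lies.

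What is actually needed is a direct argument that the torsion-free part of $M$ contributes only an eventually constant amount to $(M/T^N)_{\fin}$. The paper's proof does this by working with the exact sequence $0 \to M_{\tors} \to M \to M_{/\tors} \to 0$: it embeds the torsion-free quotient $M_{/\tors}$ into its reflexive hull (a free module with pseudo-null cokernel), identifies $((M_{/\tors})/T^N)_{\fin}$ with a fixed finite module for $N \gg 0$, and shows that multiplication by $T$ induces isomorphisms $((M_{/\tors})/T^N)_{\fin} \to ((M_{/\tors})/T^{N+1})_{\fin}$; this stabilization is what allows the finite parts of $M/T^N$ and $M_{\tors}/T^N$ to be compared uniformly in $N$. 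Your reduction to $E$ and $E'$ could be retained, but you would still need this (or an equivalent) analysis of the free/torsion-free part before Proposition \ref{prop:PI_const2} can be brought to bear on $\varphi$.
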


\begin{proof}
We first suppose that $M$ is torsion-free and claim that
\[
T \times: (M/T^N)_{\fin} \to (M/T^{N+1})_{\fin}
\]
 is isomorphic for $N \gg 0$.
By the structure theorem, we can take $0 \to M \to F \to M'' \to 0$, where $F$ is a free $R[[T]]$-module and $M''$ is pseudo-null (indeed, $F$ is the reflexive hull of $M$).
By Lemma \ref{lem:alg_L}(1), we have a module $L$, which is of finite length because so is $M''$, that fits into an exact sequence
\[
0 \to L \to M/T^N \to F/T^N \to M''/T^N \to 0
\]
for $N \gg 0$.
We have $(F/T^N)_{\fin} = 0$ as $F$ is a free module.
Therefore, we obtain $L \simeq (M/T^N)_{\fin}$, and the commutative diagram in Lemma \ref{lem:alg_L}(3) shows the claim.

Now we deal with the general case.
We set $M_{/\tors} = M/M_{\tors}$, so we have an exact sequence
\[
0 \to M_{\tors} \to M \to M_{/\tors} \to 0.
\]
We apply Lemma \ref{lem:alg_L}(3) to this sequence; since $M_{/\tors}$ is torsion-free, the module $L$ vanishes.
Then, applying the left exact functor $(-)_{\fin}$, we obtain a commutative diagram with exact rows
\[
\xymatrix{
	0 \ar[r]
	& (M_{\tors}/T^{N+1})_{\fin} \ar[r] 
	& (M/T^{N+1})_{\fin} \ar[r]
	& ((M_{/\tors})/T^{N+1})_{\fin} \\
	0 \ar[r]
	& (M_{\tors}/T^N)_{\fin} \ar[r] \ar[u]_{T \times}
	& (M/T^N)_{\fin} \ar[r] \ar[u]_{T \times}
	& ((M_{/\tors})/T^N)_{\fin}. \ar[u]_{T \times}
}
\]
By the claim above applied to the torsion-free module $M_{/\tors}$, the right vertical arrow is isomorphic when $N \gg 0$.
Therefore, the images to the right-most terms stabilize as $N \gg 0$, which shows
\[
\length_R((M/T^N)_{\fin}) - \length_R((M_{\tors}/T^N)_{\fin}) = \constant
\]
for $N \gg 0$.
This completes the proof.
\end{proof}

\begin{proof}[Proof of Theorem \ref{thm:alg_T}(2)(3)]
Claim (2) follows at once by Propositions \ref{prop:alg_T_elem}(1) and \ref{prop:PI_const1}.
Claim (3) follows at once by Propositions \ref{prop:alg_T_elem}(2), \ref{prop:PI_const2}, and \ref{prop:PI_const3}.
\end{proof}

This completes the proof of Theorem \ref{thm:alg_T}, and hence of Theorem \ref{thm:alg}.

\section{Ideal class groups}\label{sec:NT}

In this section, we review classical theory on ideal class groups.
The frameworks in \S \ref{ss:cl_cl}, \S \ref{ss:cl_Iw}, and \S \ref{ss:Leop_0} serve as prototypes for the arguments in \S \ref{sec:clgp}, \S \ref{sec:app}, and \S \ref{ss:HU_rank}, respectively.

\begin{table}[htbp]
\centering
\begin{tabular}{ccc}
\toprule
Number fields & & Function fields \\
\midrule
$\Z \subset \Z_p$ ($p\Z \subset \Z$) & 
\multirow{2}{*}{Coefficient rings} &  $A \subset A_{\fp}$ ($\fp \subset A$)\\
$\Z \subset \Q \subset \R$ & & $A \subset Q \subset Q_{\infty}$\\
$K$: a number field ($\Q \hookrightarrow K$) & Base fields &  $K$: a global $Q$-field ($Q \overset{\gamma}{\hookrightarrow} K$) \\
$\bG_{\textrm{m}}$: the multiplicative group &
\multirow{2}{*}{Functors} &  $E$: a Drinfeld $A$-module over $\cO_K$\\
$\text{($\cO_K$-Alg)} \to \text{($\Z$-Mod)}$ &  &  
$\text{($\cO_K$-Alg)} \to \text{($\cO_K\{\tau\}$-Mod)} \to \text{($A$-Mod)}$\\
$U(K)$, $U_{\ff}(K)$, $\hat{U}_S(K)$ & Unit groups &  $U(E/\cO_K)$, $U_{\ff}(E/\cO_K)$, $\hat{U}_S(E/\cO_{K})$\\
$\Cl(K)$, $\Cl_{\ff}(K)$, $\Cl_S(K)$ & Class groups &  $H(E/\cO_K)$, $H_{\ff}(E/\cO_K)$, $H_S(E/\cO_{K})$\\
$X_S(\cK)$ & Iwasawa modules &  $H_S(E/\cO_{\cK})_{\fp}$\\
$r_2(K)$ & (Expected) ranks & $r_E(K)$\\
\bottomrule
\end{tabular}
\caption{Analogies between number fields and function fields}
\label{table:analogies}
\end{table}

\subsection{The ideal class groups and unit groups}\label{ss:cl_cl}

Let $K$ be a number field, i.e., a finite extension of the rationals $\Q$.
The integer ring $\cO_K \subset K$ is defined as the integral closure of $\Z$ in $K$.
We write $U(K) = \cO_K^{\times}$ for the unit group and $\Cl(K)$ for the class group of $K$.
These fit into a natural exact sequence
\[
0 \to U(K) \to K^{\times} 
\to \bigoplus_v \frac{K_v^{\times}}{\cO_{K, v}^{\times}}
\to \Cl(K) \to 0,
\]
where $v$ runs over all finite primes of $K$ and $\cO_{K, v} \subset K_v$ denote the completions of $\cO_K \subset K$ (cf.~Definition \ref{defn:clgp_ori} and Remark \ref{rem:clgp_var}).
Fundamental theorems in algebraic number theory show that (cf.~Proposition \ref{prop:UH_rank})
\begin{itemize}
\item[(1)]
The $\Z$-module $\Cl(K)$ is finite.
\item[(2)]
The $\Z$-module $U(K)$ is finitely generated of rank $[K: \Q] - (r_2(K) + 1)$, where $r_2(K)$ denotes the number of complex places of $K$.
\end{itemize}

Let $\ff$ be a nonzero ideal of $\cO_K$, which we regard as a modulus $\prod_v v^{\ord_v(\ff)}$.
We define the unit group $U_{\ff}(K)$ and class group $\Cl_{\ff}(K)$ by the exact sequence (cf.~Definition \ref{defn:clgp_moduli})
\[
0 \to U_{\ff}(K) \to K^{\times} 
\to \bigoplus_v \frac{K_v^{\times}}{U^{(\ff)}(K_v)}
\to \Cl_{\ff}(K) \to 0,
\]
where $U^{(\ff)}(K_v) = \{x \in \cO_{K, v}^{\times} \mid x \equiv 1 \mod \ff \cO_{K, v} \}$ (note that $U^{(\ff)}(K_v) = \cO_{K, v}^{\times}$ if $\ord_v(\ff) = 0$).
Then the analogues of Propositions \ref{prop:compar1} and \ref{prop:UH_rank2} are valid.

We write $\hat{\Z}$ for the profinite completion of $\Z$, and set $\hat{U}_{\ff}(K) = \hat{\Z} \otimes_{\Z} U_{\ff}(K)$.
For a finite set $S$ of finite primes of $K$, we set (cf.~Definition \ref{defn:clgp_moduli2})
\[
\hat{U}_S(K) = \varprojlim_{\ff} \hat{U}_{\ff}(K),
\qquad
\Cl_S(K) = \varprojlim_{\ff} \Cl_{\ff}(K),
\]
where $\ff$ runs over moduli of $K$ whose supports are in $S$.
Then the analogues of Propositions \ref{prop:compar2} and \ref{prop:UH_rank3} are valid.

We also have the analogues of Definitions \ref{defn:fld_ext} and \ref{defn:fld_ext2}.
However, contrary to Propositions \ref{prop:descent1} and \ref{prop:descent2}, we do not have a nice Galois descent property for the ideal class groups.
This is because we have to deal with the multiplicative groups like $(K')^{\times}$ and $(K'_v)^{\times}$ as Galois modules, which are not necessarily cohomologically trivial.
Indeed, the cohomology groups are studied by class field theory.

\subsection{Iwasawa modules}\label{ss:cl_Iw}

Let $\cK/K$ be a $\Z_p$-tower of number fields and let $K_n$ be its $n$-th layer.
It is known that only $p$-adic primes can ramify in a $\Z_p$-extension, so there are only finitely many ramified primes; this property does not hold in the function field case in general.

For a finite set $S$ of finite primes of $K$, we define the Iwasawa module
\[
X_S(\cK) = \varprojlim_n (\Z_p \otimes_{\hat{\Z}} \Cl_S(K_n)),
\]
which is a compact $\Z_p[[\Gamma]]$-module.
The failure of the Galois descent property makes it difficult to study $\Z_p \otimes_{\Z} \Cl(K_n)$ by using $X(\cK) = X_{\emptyset}(\cK)$.
However, this was overcome in the original Iwasawa theory, leading to the result described in \S \ref{ss:intro_1}.
Indeed, we know that $X(\cK)$ is torsion over the Iwasawa algebra and the $\lambda$ and $\mu$ are the invariants attached to $X(\cK)$.
This contrasts to the Taelman class groups as mentioned in Remark \ref{rem:non-tors}.

\subsection{Leopoldt's conjecture}\label{ss:Leop_0}

See \cite[Chapter X, \S 3]{NSW08} or \cite[\S 5.5, \S 13.5]{Was97} for the details.
For a number field $K$ and a prime number $p$, Leopoldt's conjecture claims that the natural homomorphism
\[
\Z_p \otimes_{\Z} U(K) \to \bigoplus_{v \in S_p} \Z_p \otimes_{\hat{\Z}} \cO_{K, v}^{\times}
\]
is injective, where $S_p$ denotes the set of $p$-adic primes of $K$ (cf.~Conjecture \ref{conj:LC}).
Leopoldt's conjecture is known to be true if $K/\Q$ is abelian (Brumer's theorem).
For each $S \supset S_p$, the following are equivalent (cf.~Proposition \ref{prop:LC_rank}):
\begin{itemize}
\item[(i)]
Leopoldt's Conjecture holds.
\item[(ii)]
We have $\Z_p \otimes_{\hat{\Z}} \hat{U}_S(K) = 0$.
\item[(iii)]
We have $\rank_{\Z_p}(\Z_p \otimes_{\hat{\Z}} \Cl_S(K)) = r_2(K) + 1$.
\end{itemize}

For a $\Z_p$-extension $\cK/K$, Leopoldt's conjecture for the intermediate fields implies
\[
\rank_{\Z_p[[\Gamma]]}(X_S(\cK)) = r_2(K)
\]
for any $S \supset S_p$ (cf.~Proposition \ref{prop:rank_Iw}).
Indeed, for this result, a weak form of Leopoldt's conjecture suffices, which is known to be true if $\cK/K$ is the cyclotomic $\Z_p$-extension.

\section*{Acknowledgments}

The first author is supported by JSPS KAKENHI Grant Number 22K13898.

{
\bibliographystyle{amsalpha}
\bibliography{biblio}
}

\end{document}